\newcommand{\R}{\mathbb{R}}
\newcommand{\N}{\mathbb{N}}
\newcommand{\C}{\mathbb{C}}
\newcommand{\Z}{\mathbb{Z}}
\DeclareMathOperator{\ran}{Ran}
\DeclareMathOperator{\Op}{Op}
\DeclareMathOperator{\Span}{span}
\newtheorem{definition}{Definition}
\newtheorem{proposition}{Proposition}
\newtheorem{corollary}{Corollary}
\newtheorem{lemma}{Lemma}
\newtheorem{theorem}{Theorem}
\begin{document}

\begin{abstract}
The purpose of this note is to study a class of operators generalizing Bergman kernels. Among other results, we obtain a (micro-)local description of a class of FIOs that are elliptic projectors, in the analytic category. 
\end{abstract}

\title{Microlocal Projectors}

\author{Yannick Guedes Bonthonneau}

\date{October 2022}

\maketitle

In november 2022, mathematicians gathered at CIRM under the theme of ``Bergman kernels in microlocal analysis and mathematical physics''. The main object of this conference was the reproducing kernel introduced by Bergman \cite{Bergman-1932}, associated with the space of holomorphic functions of a given domain. This central object in analysis of several complex variables has many applications in the classification of said domains, since it is a holomorphic invariant. Its study motivated the development of many tools of microlocal analysis, for example the works of Fefferman \cite{Fefferman-1974} or Boutet-Sj\"ostrand \cite{Boutet-Sjostrand-76}. Ninety years after its discovery, the conference showed how this is still a very active domain.

This theme comes in two different flavors: on the one hand the case of open pseudo-convex domains in complex manifolds, related to the Szeg\"o kernel. On the other hand the study of the space of holomorphic sections of complex line bundles over compact closed manifolds. In both cases, a lot of effort in the 1970's went into proving that these Bergman kernels behave as FIOs with complex phase. To explain what this means, we have to choose one side, and we will in this article consider the ``line bundle'' case. To this end, consider $L\to M$ a holomorphic complex line bundle over a compact complex manifold. Provided some geometric condition on $L$ is satisfied, the $L^2$ orthogonal projection on the space of holomorphic sections of $L^k$ has its kernel satisfying (near the diagonal)
\begin{equation}\label{eq:FIO-complex-phase}
B_k(\alpha,\beta) = e^{ i k \phi_L(\alpha,\beta)} a_k(\alpha,\beta) + \mathcal{O}_{k\to+\infty}(k^{-\infty}). 
\end{equation}
When suitably normalized the function $\phi_L$ appearing is Calabi's Diastasis function \cite{Calabi-1953}. It is complex valued, with non negative imaginary part, hence the name ``FIO with complex phase''. The function $a_k$ is a smooth amplitude.  

While they proved to be central for the geometry of complex domains, the Bergman kernels are also of interest in the context of mathematical physics, particularly for questions of quantization. While K\"ahler manifolds are naturally symplectic manifolds, the converse is not true. Victor Guillemin and Alejandro Uribe \cite{Guillemin-Uribe-88} discovered that with an almost complex structure, one could produce, on any symplectic manifold, a vector space (of eigenfunctions) that somehow behaved like the space of holomorphic sections in the K\"ahler case.  Followed a period of intense activity, motivated by the Guillemin-Sternberg question ``does quantization commute with reduction''. Borthwick and Uribe \cite{Borthwick-Uribe-1996} were the ones to provide a (more or less) complete setup, with Tœplitz-like operators. There are thus two versions of the theory, one with $\mathrm{Spin}^c$ Dirac operators, and the other one with a generalization of the Hodge Laplacian. 

In any case, in these new schemes of quantization, an asymptotic similar to that of \eqref{eq:FIO-complex-phase} is valid. In the fall of 2022, I presented a study of elliptic FIOs with complex phase (i.e operators whose kernels satisfy an expansion in the form \eqref{eq:FIO-complex-phase}) that are projectors. The purpose of this note is to be a recording of these investigations.

While dealing with the space of holomorphic sections, to build a Bergman kernel, one needs a connection, which may be only $C^\infty$ (or even $C^k$ !). For quite some time, most of the theory was done in this case. However recently, in particular with the work of Sj\"ostrand, V\~u Ng\d{o}c and Rouby \cite{RSV-18}, appeared the possibility to upgrade usual $C^\infty$ results with $\mathcal{O}(h^\infty)$ remainders to analytic results with $\mathcal{O}(e^{-c/h})$ errors. In this paper, we will consider mainly the analytic setting. It is however the case that replacing holomorphic extensions with almost analytic extensions\footnote{in the spirit of \cite{Melin-Sjostrand-75}.} whenever relevant would provide almost complete proofs for most statements for the $C^\infty$ case.

Let us start with a definition:
\begin{definition}
Let $U\subset \R^m$ be an open set, and $\phi: U\times U \to \C$ (potentially only defined in a neighbourhood of the diagonal) be a real analytic function satisfying
\begin{enumerate}
	\item $\phi(\alpha,\alpha) = 0$
	\item $\Im \phi(\alpha,\beta) \geq |\alpha-\beta|^2 / C$ for some $C>0$
	\item For $\alpha,\beta$ close enough, $\gamma \mapsto \phi(\alpha,\gamma)+\phi(\gamma,\beta)$ has exactly one critical point $\gamma_c(\alpha,\beta)$ close to $\alpha,\beta$, and 
	\begin{equation}\label{eq:reproducing}
	\phi(\alpha,\beta) = \phi(\alpha,\gamma_c(\alpha,\beta)) + \phi(\gamma_c(\alpha,\beta) , \beta). 
	\end{equation}
\end{enumerate}
Then we say that $\phi$ is a \emph{projector phase} on $U$. If additionally we have
\[
\phi(\alpha,\beta) = - \overline{\phi(\beta,\alpha)},
\]
we say that $\phi$ is \emph{self-adjoint projector phase}. 
\end{definition}

Notice that item (2) implies the existence and uniqueness of item (3) according to Implicit Functions. What is really crucial is the \emph{reproducing property} equation \eqref{eq:reproducing}.

This result was presented at CIRM in 2022:
\begin{theorem}\label{thm:main}
Let $\phi$ be a projector phase on $U$. Then $U$ is endowed with a triple $(g,J,\omega)$, where $\omega$ is an exact symplectic form, $g$ is a $2$-symmetric (complex valued) non-degenerate tensor, and $J$ an endomorphism, so that for $u,v\in TU$,
\[
g(Ju,v)=\omega(u,v).
\]
If $\phi$ is self-adjoint, $g$ is a metric. Additionally, we can find an amplitude $a(h;\alpha,\beta)$ analytic and supported near the diagonal such that the operator $\Pi$ on $C^\infty(U)$ with kernel
\[
e^{\frac{i}{h} \phi(\alpha,\beta)} a(h;\alpha,\beta)
\]
satisfies (for $0<h$ sufficiently small) 
\begin{equation}\label{eq:Def-approximate-projector}
\Pi^2 f = \Pi f + \mathcal{O}(e^{-c/h}),\ f\in C^\infty_c(U). 
\end{equation}
\end{theorem}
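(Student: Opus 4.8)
\medskip
\noindent\textbf{Plan of proof.}
The plan is to proceed in three stages: first extract the triple $(g,J,\omega)$ from the $2$-jet of $\phi$ along the diagonal; then solve a hierarchy of transport equations for an analytic symbol $a(h;\alpha,\beta)\sim h^{-m/2}\sum_{j\ge0}h^jb_j(\alpha,\beta)$, using the reproducing property \eqref{eq:reproducing} as the engine of the recursion; and finally resum this symbol in the sense of Sj\"ostrand and localise it near the diagonal, so that every remainder becomes $\mathcal{O}(e^{-c/h})$.

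\textbf{Geometry from the $2$-jet.} Differentiating $\phi(\alpha,\alpha)=0$ once gives $\partial_\alpha\phi+\partial_\beta\phi=0$ on the diagonal, and item (2) forces the part of $\phi$ linear in $\alpha-\beta$ to be real; so $\theta:=\langle\partial_\alpha\phi(\alpha,\alpha),d\alpha\rangle$ is a real $1$-form, and I would set $\omega:=d\theta$, which is closed and exact by construction. Writing the second-order Taylor polynomial of $\phi(\alpha+u,\alpha+v)$ as $\langle p(\alpha),u-v\rangle+\tfrac12(\langle Au,u\rangle+2\langle Bu,v\rangle+\langle Cv,v\rangle)$ with $A=\phi_{11}$, $C=\phi_{22}$ symmetric and $B=\phi_{12}$, differentiating $\phi(\alpha',\alpha')\equiv0$ twice yields $A+C=-(B+B^{\mathsf T})$, while item (2) gives $\Im A>0$ and $\Im C>0$; and $\omega$ corresponds on $T_\alpha U$ to (twice) the antisymmetric part of $B$. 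Such a quadratic datum is precisely a ``positive complex point'', and the standard linear algebra of such a datum produces an endomorphism $J$ with $J^2=-\mathrm{Id}$ together with a symmetric bilinear form $g$ satisfying $g(J\cdot,\cdot)=\omega$, with $\Im g>0$ in general and $g>0$ (a metric) when $\phi(\alpha,\beta)=-\overline{\phi(\beta,\alpha)}$. Feeding the $2$-jet of \eqref{eq:reproducing} into these relations should supply the Riccati-type identity that forces $J^2=-\mathrm{Id}$ and the non-degeneracy of $\omega$; conceptually $\phi$ is the generating function of a positive idempotent canonical relation on $T^*U$ and $(g,J,\omega)$ is the infinitesimal shadow of its real locus, in the spirit of Boutet de Monvel--Guillemin and \cite{Melin-Sjostrand-75}.

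\textbf{Transport equations.} The kernel of $\Pi^2$ is $\int e^{\frac{i}{h}(\phi(\alpha,\gamma)+\phi(\gamma,\beta))}a(h;\alpha,\gamma)a(h;\gamma,\beta)\,d\gamma$. By item (3) the $\gamma$-phase has a unique nondegenerate critical point $\gamma_c(\alpha,\beta)$ with critical value exactly $\phi(\alpha,\beta)$---this is \eqref{eq:reproducing}---and item (2) makes its critical Hessian $\Phi''(\alpha,\beta)$ elliptic with positive-definite imaginary part, so complex stationary phase applies and the kernel of $\Pi^2$ is again $e^{\frac{i}{h}\phi(\alpha,\beta)}\widetilde a(h;\alpha,\beta)$ with $\widetilde a\sim\sum_{k\ge0}h^kL_k(a\otimes a)$, where $L_0(a\otimes a)=(2\pi h)^{m/2}\det(-i\Phi'')^{-1/2}a(\alpha,\gamma_c)a(\gamma_c,\beta)$ and the $L_k$ are explicit differential operators. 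Imposing $\widetilde a=a$ gives at leading order the functional equation $b_0(\alpha,\beta)=(2\pi)^{m/2}\det(-i\Phi'')^{-1/2}b_0(\alpha,\gamma_c)b_0(\gamma_c,\beta)$, which I would solve by the Van Vleck--Morette determinant $b_0=(2\pi)^{-m/2}\det(i\,\partial_\alpha\partial_\beta\phi)^{1/2}$ (suitable branch near the diagonal), the point being that the composition identity $\det(\partial_\alpha\partial_\beta\phi(\alpha,\beta))=(-1)^m\det(\partial_\alpha\partial_\gamma\phi(\alpha,\gamma_c))\det(\partial_\gamma\partial_\beta\phi(\gamma_c,\beta))/\det\Phi''$ follows from differentiating \eqref{eq:reproducing} in $\alpha$ and $\beta$ through $\gamma_c$ (and on the diagonal it reads $\det\Phi''(\alpha,\alpha)=(-1)^m\det\partial_\alpha\partial_\beta\phi(\alpha,\alpha)$, one face of the non-degeneracy forced by the axioms). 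The remaining equations take the form $(\mathrm{Id}-\mathcal{L})b_j=R_j(b_0,\dots,b_{j-1})$ with $\mathcal{L}$ the linearisation of the quadratic map at $b_0$; on the diagonal one computes $\mathcal{L}=2\cdot\mathrm{Id}$, so $\mathrm{Id}-\mathcal{L}=-\mathrm{Id}$ is invertible there, and differentiating the equation in directions transverse to the diagonal (using $\gamma_c=\alpha$ on it) should determine the full Taylor expansion of $b_j$ along the diagonal in a triangular fashion, hence $b_j$ as a real-analytic germ.

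\textbf{Resummation, localisation, and the main obstacle.} Carried out quantitatively, the previous step should give Cauchy estimates $\sup|b_j|\le C^{j+1}j!$, so that $\sum h^jb_j$ is a classical analytic symbol; its Sj\"ostrand realisation then yields a genuine analytic $a(h;\alpha,\beta)$ agreeing with the formal series modulo $\mathcal{O}(e^{-c/h})$, which I would multiply by a cutoff supported in a fixed neighbourhood of the diagonal and equal to $1$ nearer it. In $\Pi^2$ the contributions where either amplitude factor meets the transition region of the cutoff lie where $\Im\phi(\alpha,\gamma)+\Im\phi(\gamma,\beta)\ge1/C$ by item (2), hence are $\mathcal{O}(e^{-c/h})$; the rest is handled by the analytic stationary-phase lemma with exponentially small remainder (here I would lean on \cite{RSV-18} and \cite{Melin-Sjostrand-75}), and since every transport equation holds to infinite order, all terms of $\widetilde a$ cancel against those of $a$, leaving $\Pi^2f-\Pi f=\mathcal{O}(e^{-c/h})$ for $f\in C^\infty_c(U)$. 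I expect the main obstacle to be precisely this last point: not the formal algebra, but staying in the analytic category with $\mathcal{O}(e^{-c/h})$ in place of $\mathcal{O}(h^\infty)$---propagating the Cauchy estimates for the $b_j$ through the nonlinear recursion, and applying complex stationary phase and contour deformation uniformly near the diagonal while tracking the exponential weights coming from $\Im\phi$. The $C^\infty$ version of the statement would follow by the same steps with holomorphic extensions replaced by almost-analytic ones and $e^{-c/h}$ by $h^\infty$.
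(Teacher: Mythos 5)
Your first stage --- extracting $(g,J,\omega)$ from the $2$-jet along the diagonal and feeding the reproducing property of the quadratic model back in to force $J^2=-\mathrm{Id}$ and the non-degeneracy of $\omega$ --- is essentially the paper's Proposition~\ref{prop:geometric-objects} and is sound. The construction of the amplitude, however, fails at its very first step. A consequence of the axioms, proved in the paper, is that $\partial_\alpha\partial_\beta\phi$ has rank exactly $n=m/2$ everywhere near the diagonal: on the diagonal $\partial_\alpha\partial_\beta\phi=L(\tilde{J}-i)L$ with $\tilde{J}$ antisymmetric and $\tilde{J}^2=-1$, so $\tilde{J}-i$ annihilates the $n$-dimensional eigenspace $E_{i}$. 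Hence $\det(\partial_\alpha\partial_\beta\phi)\equiv 0$, your Van Vleck Ansatz $b_0=(2\pi)^{-m/2}\det(i\,\partial_\alpha\partial_\beta\phi)^{1/2}$ vanishes identically, your ``composition identity'' degenerates to $0=0$, and the claimed diagonal identity $\det\Phi''(\alpha,\alpha)=(-1)^m\det\partial_\alpha\partial_\beta\phi(\alpha,\alpha)$ is false: the left side is $\det(2iD)\neq 0$ while the right side vanishes. This is not a fixable normalization issue. The canonical relation generated by $\phi$ is the idempotent relation $\cup_{\gamma\in\Sigma}\mathcal{F}_\phi(\gamma)\times\mathcal{F}^\ast_\phi(\gamma)$, which is never a graph, so the graph-type non-degeneracy your stationary-phase bookkeeping presupposes is structurally excluded; the degeneracy of $\partial_\alpha\partial_\beta\phi$ is the whole point.

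There are two further gaps in the second and third stages. First, idempotency alone does not determine the amplitude: projectors with a given phase form a large family (any $A\Pi A^{-1}$ with $A$ an elliptic pseudor is again one, with a different elliptic amplitude), so the hierarchy $(\mathrm{Id}-\mathcal{L})b_j=R_j$ is underdetermined off the diagonal; and since $\mathcal{L}$ is nonlocal (it evaluates at $(\alpha,\gamma_c)$ and $(\gamma_c,\beta)$), solvability at each transverse order is a compatibility problem, not the inversion of $-\mathrm{Id}$. The paper supplements $C^2=C+\mathcal{O}(h^\infty)$ with the annihilation conditions $\Op(\zeta_j)C=C\Op(\zeta_j^\ast)=\mathcal{O}(h^\infty)$ precisely to turn this into well-posed transport equations transverse to the diagonal. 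Second, the point you defer --- propagating $|b_j|\le C^{j+1}j!$ through the nonlinear recursion --- is exactly what the paper declines to do: it instead factorizes $\Pi=T_aS_b$ through FBI-type FIOs whose phases $\psi,\psi^\ast$ generate the two halves $\Lambda_\to$, $\Lambda_\leftarrow$ of the idempotent relation, arranges $S_bT_a=1+\mathcal{O}(e^{-1/Ch})$ by inverting the elliptic pseudor $S_1T_1=\Op(c)$, and obtains $(T_aS_b)^2=T_aS_b$ for free; analyticity of the amplitude then comes from one application of analytic stationary phase to an explicit composition rather than from estimates on a recursion. Without either that factorization or a Deleporte--Charles type analysis of the transport equations, your third stage does not close.
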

In the usual constructions of Bergman or Bergman-like kernels, one usually starts from such an almost K\"ahler triple as above, and the gist of this theorem is that if there exists a Bergman-like kernel, there must also exist such an underlying geometrical datum.

In a second part of the paper, we will prove the following
\begin{theorem}\label{thm:conjugation}
Assume that $U$ is a ball, and let $\Pi_1, \Pi_2$ be two operators as in Theorem \ref{thm:main}. Then $\Pi_1$ and $\Pi_2$ can be microlocally conjugated by $U$ an elliptic FIO with complex phase. Additionally, if they are both self-adjoint, $U$ can be assumed to be unitary, with real phase. 
\end{theorem}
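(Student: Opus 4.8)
We begin by reading off from each projector phase its underlying canonical geometry. Differentiating $\phi_j$ ($j=1,2$) along the diagonal gives, over each $\alpha\in U$, a \emph{real} covector $d_\alpha\phi_j(\alpha,\alpha)$ (real because $\Im\phi_j\ge 0$ vanishes to second order on the diagonal), and, together with item (2) of the definition and the reproducing property \eqref{eq:reproducing}, it follows that $\Pi_j$ is, microlocally near the diagonal, an FIO with complex phase whose canonical relation is idempotent; equivalently $\Pi_j$ quantizes the projection onto a strictly positive Lagrangian $\Gamma_j$ in the complexification of $T^*U$, whose real points are the $(\alpha,d_\alpha\phi_j(\alpha,\alpha))$ and whose positivity is exactly the non-degeneracy — in the self-adjoint case, positivity — of the tensor $g_j$ and the endomorphism $J_j$ furnished by Theorem \ref{thm:main}. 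The hypothesis that $U$ is a ball enters here: the triples $(g_j,J_j,\omega_j)$ can then be deformed into one another. Indeed, Moser's trick brings the exact symplectic forms $\omega_1,\omega_2$ to a common normal form by a real-analytic diffeomorphism, after which the space of compatible positive Lagrangians (a Siegel-type domain, fibrewise contractible) carries a path $s\mapsto\Gamma_s$ from $\Gamma_1$ to $\Gamma_2$, hence a path $s\mapsto\varkappa_s$ of (complex, and \emph{real} in the self-adjoint case, thanks to $g$ being a metric and $\omega$ exact) canonical transformations with $\varkappa_0=\mathrm{Id}$ and $(\varkappa_1)_*\Gamma_1=\Gamma_2$.

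Next I would quantize this isotopy. Solving the Heisenberg-type evolution $h\partial_s R_s=i\,\Op_h(b_s)\,R_s$, $R_0=\mathrm{Id}$, where $b_s$ is an analytic-symbol generator of the flow $\varkappa_s$, produces an elliptic FIO $R:=R_1$ with complex phase quantizing $\varkappa_1$ — with \emph{real} phase when $\varkappa_1$ is real. By an Egorov argument the conjugate $\Pi_1^\sharp:=R^{-1}\Pi_1R$ is again a microlocal projector (idempotent modulo $\mathcal{O}(e^{-c/h})$), now with associated Lagrangian $(\varkappa_1)_*\Gamma_1=\Gamma_2$, i.e. the same as $\Pi_2$. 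It then remains to treat two microlocal projectors sharing the same positive Lagrangian. For that, set $\mathcal{S}:=\Pi_2\,\Pi_1^\sharp$; then $\mathcal{S}\Pi_1^\sharp=\Pi_2\mathcal{S}=\mathcal{S}$ modulo $\mathcal{O}(e^{-c/h})$ automatically, so $\mathcal{S}$ intertwines. Since $\Pi_2$ and $\Pi_1^\sharp$ have the same Lagrangian and strictly positive leading amplitudes, the compressions $\Pi_1^\sharp\Pi_2\Pi_1^\sharp$ and $\Pi_2\Pi_1^\sharp\Pi_2$ are elliptic pseudodifferential operators on $\ran\Pi_1^\sharp$, resp. $\ran\Pi_2$ (the leading symbol is a Gaussian integral of a product of positive Gaussians); correcting $\mathcal{S}$ by the inverse of such an elliptic factor on the range and iterating over powers of $h$ (resummed in the analytic class) yields $\mathcal{S}$ invertible from $\ran\Pi_1^\sharp$ onto $\ran\Pi_2$ modulo $\mathcal{O}(e^{-c/h})$. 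Then $\mathcal{U}:=\mathcal{S}\,R^{-1}$ — an elliptic FIO with complex phase — satisfies $\mathcal{U}\Pi_1=\Pi_2\mathcal{U}$ and is invertible from $\ran\Pi_1$ to $\ran\Pi_2$. In the self-adjoint case all of $R$ and $\mathcal{S}$ can be arranged compatibly with adjoints, and replacing $\mathcal{U}$ by $\mathcal{U}(\mathcal{U}^*\mathcal{U})^{-1/2}$ — legitimate since $\mathcal{U}^*\mathcal{U}$ is elliptic self-adjoint on $\ran\Pi_1$ — makes it unitary while keeping the phase real.

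The step I expect to be the main obstacle is the quantization and bookkeeping in the analytic category: propagating genuine $\mathcal{O}(e^{-c/h})$, not merely $\mathcal{O}(h^\infty)$, control of every remainder through the Heisenberg evolution, through the compositions of FIOs with complex phase, and through the resummations, along the lines of \cite{RSV-18}, while simultaneously preserving reality of the phase and unitarity of the operator in the self-adjoint case. By contrast the geometric deformation of the Lagrangians is soft once $U$ is known to be a ball, and the final ``same-Lagrangian'' matching is a routine stationary-phase computation.
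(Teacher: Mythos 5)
Your overall architecture (normalize the underlying geometry by a canonical transformation, quantize it, then compare two projectors sharing the same phase) is the same as the paper's, but two of your steps do not go through as written.

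First, the geometric datum attached to $\Pi_j$ is not a single positive Lagrangian $\Gamma_j\subset T^*U\otimes\C$: it is the transversally intersecting \emph{pair} of involutive manifolds $(\mathcal{J}_{\phi_j},\mathcal{J}_{\phi_j}^\ast)$ of Proposition \ref{prop:involutive}, and the canonical relation of $\Pi_j$ is the $2n$-dimensional idempotent Lagrangian $\Lambda_{\phi_j}=\cup_\gamma\mathcal{F}_{\phi_j}(\gamma)\times\mathcal{F}^\ast_{\phi_j}(\gamma)$ in the product. In the non-self-adjoint case $\mathcal{J}_\phi^\ast\neq\overline{\mathcal{J}_\phi}$, so your isotopy must transport \emph{both} members of the pair simultaneously; a path in a Siegel-type space of compatible complex structures only sees one of them. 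This is exactly what Lemma \ref{lemma:flattening-pair} (complex case) and Proposition \ref{prop:normal-form-positive-involutive} (real symplectomorphism in the positive, self-adjoint case) are for, and the reality of the flattening map in the self-adjoint case is a genuine Hamilton--Jacobi argument, not something you get for free from $g$ being a metric.

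Second, and more seriously, the final matching step fails. With $\Pi_1^\sharp=R^{-1}\Pi_1R$ and $\mathcal{S}=\Pi_2\Pi_1^\sharp$, the relation you verify is $\mathcal{U}\Pi_1=\mathcal{U}=\Pi_2\mathcal{U}$ for $\mathcal{U}=\mathcal{S}R^{-1}$; this is satisfied by \emph{any} operator of the form $\Pi_2(\cdot)\Pi_1$ and does not give $\Pi_2=\mathcal{U}\Pi_1\mathcal{U}^{-1}$, because $\mathcal{S}$ is a product of two projector FIOs and hence is nowhere microlocally invertible on the ambient space --- its canonical relation is again the idempotent $\Lambda$, not the graph of a symplectomorphism, so $\mathcal{U}$ is not an elliptic FIO. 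Correcting $\mathcal{S}$ "on the ranges" only produces an isomorphism $\ran\Pi_1\to\ran\Pi_2$, which is weaker than the conjugacy claimed in the theorem. The paper closes this gap through the explicit product structure of the model projector, $\Pi^{stan}=TS$, and Lemma \ref{lemma:local-projectors-around-Pi-stan}: the computation $\Pi^{stan}\Op(a)\Pi^{stan}=\Op(a(0,0,x',\xi'))\Pi^{stan}$ forces $a(0,0,x',\xi')=1$ for any projector $\Op(a)\Pi^{stan}\Op(b)$ with the model phase, after which one exhibits an elliptic \emph{pseudodifferential} conjugator $\Op(\sigma)$, globally invertible near $\Sigma$ (and, in the self-adjoint case, upgraded to a unitary by polar decomposition, using that $(A^*A)^{1/2}$ commutes with $\Pi^{\widetilde{stan}}$). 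If you want to avoid the model, you would need something like $W=\Pi_2\Pi_1^\sharp+(1-\Pi_2)(1-\Pi_1^\sharp)$ together with a proof that $W$ is microlocally elliptic, which already requires knowing that the principal symbols of the two projectors coincide --- that is, the transport-equation uniqueness argument you have not supplied.
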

For more precision on the terms of the statement, see Theorem \ref{thm:local-structure-projectors} and Corollary \ref{cor:decomp-self-adjoint}. This result immediately raises the question of whether one can obtain global conjugation results in the presence of topology. The study of this interesting problem is left to future investigations. 

The author is indebted to Laurent Charles, San V\~u Ng\d{o}c, Johannes Sj\"ostrand and Alix Deleporte for insightful conversations. It is also a pleasure to thank again the organizers (Deleporte, Sj\"ostrand, and Michael Hitrik) and CIRM for this beautiful conference. Finally, the author acknowledges the support of Agence Nationale de la Recherche through
the PRC grant ADYCT (ANR-20-CE40-0017).

\section{November 2022 in CIRM}
\label{part:1}
Here we give detailed statements and proofs corresponding to the presentation in Luminy. 

\subsection{Study of Projector phases}
\label{sec:study-phase}

In this section, we will study projector phases, and show that they can be factorized. This is closely related to \S (3.4) in \cite{Boutet-Guillemin}, and in particular the enigmatic sentence
\begin{quote}
Now $S$ and $HH^*$ are both adapted to $Id_\Sigma$, of degree 0. So they are also Hermite operators with respect to $\Sigma$, whose symbols are projectors hence of rank 1 (cf. (2.11)). Since $S$ and $HH^*$ are both orthogonal and $S\circ HH^* \sim HH^*$, their symbols are equal
\end{quote} 

A good part of our arguments are quite elementary but rely on manipulation of partial derivatives, so we fix notational conventions. Let $f$ be a function depending on $\alpha,\beta\in U$ (or some open set of $U\times U$). By $\partial_\alpha f$, we will denote the row vector
\[
\left(\frac{\partial f}{\partial \alpha_1}, \dots, \frac{\partial f}{\partial \alpha_m} \right).
\]
(and likewise for $\partial_\beta f$). We will identify $\partial_\alpha f$ with the $1$-form $\partial_\alpha f \cdot d\alpha$. We will also denote by $\partial_\alpha\partial_\beta f$ the $m\times m$ matrix with coefficients
\[
\begin{pmatrix}
\frac{\partial^2 f}{\partial \alpha_1 \partial \beta_1} & \dots & \frac{\partial^2 f}{\partial\alpha_1\partial\beta_m} \\
 \dots & \dots & \dots \\
\frac{\partial^2 f}{\partial\alpha_m \partial \beta_1} &\dots &\frac{\partial^2 f}{\partial\alpha_m \partial \beta_m} 
\end{pmatrix}.
\]
With this convention, $(\partial_\alpha\partial_\beta f)^\top = \partial_\beta\partial_\alpha f$.

\subsubsection{Quadratic behaviour near the diagonal}

In order to prove Theorem \ref{thm:main}, we will start by studying the (local) properties of the phase. Let us assume that $U$ is simply connected. Then we obtain
\begin{proposition}\label{prop:geometric-objects}
The dimension $m=2n$ is even, and $U$ is endowed with a compatible triple $(\omega,g,J)$, where $\omega=d\vartheta$ is an exact real symplectic 2-form, $g$ is a symmetric (complex with positive real part) non-degenerate 2-form, and $J$ is an endomorphism of $TU\otimes \C$, so that
\[
J^2=-1,\ g(Ju,Jv) =g(u,v) \text{ and }g(Ju,v)=\omega(u,v).
\]
If $\phi$ is self-adjoint, $g$ is also real, so that $(\omega,g,J)$ is an almost K\"ahler structure. Finally, there exists a real valued real analytic function $f$ on $U$ so that
\begin{equation}\label{eq:Taylor-the-phase}
\begin{split}
\phi(\alpha+u,\alpha+v)&-f(\alpha+u)+f(\alpha+v) = \\
		&\vartheta_\alpha(u-v) + \frac{i}{2}g_\alpha(u-v)  + \omega_\alpha(u,v) + \mathcal{O}(u,v)^3.
\end{split}
\end{equation}
\end{proposition}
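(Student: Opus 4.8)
The plan is to Taylor-expand $\phi$ along the diagonal and to read the triple $(\omega,g,J)$ off the low-order coefficients: hypothesis (2) of the definition controls the imaginary parts, and hypothesis (3), expanded to second order, pins the whole thing down via a matrix identity that turns out to be $J^2=-1$. Concretely, fix $\alpha$ and expand $\phi(\alpha+u,\alpha+v)$ in $(u,v)$. The constant term vanishes by (1), and differentiating $\phi(\alpha,\alpha)=0$ once gives $\partial_\alpha\phi|_{\mathrm{diag}}=-\partial_\beta\phi|_{\mathrm{diag}}=:-\psi_\alpha$, so the linear term is $\psi_\alpha(v-u)$. Write the quadratic term as $\tfrac12u^\top Au+u^\top Bv+\tfrac12v^\top Cv$, with $A=\partial_\alpha\partial_\alpha\phi|_{\mathrm{diag}}$, $C=\partial_\beta\partial_\beta\phi|_{\mathrm{diag}}$ (symmetric) and $B=\partial_\alpha\partial_\beta\phi|_{\mathrm{diag}}$; differentiating $\phi(\alpha,\alpha)=0$ twice gives $A+B+B^\top+C=0$. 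Feeding $v=0$, then $u=0$, then arbitrary real $u,v$ into (2) and comparing the leading terms forces $\psi_\alpha$ to be real, $\Im A=\Im C=:S_\alpha$ to be positive definite, and $\Im(A+B^\top)=0$ (so $\Im B=-S_\alpha$); in the self-adjoint case $\phi(\alpha+u,\alpha+v)=-\overline{\phi(\alpha+v,\alpha+u)}$ yields in addition $A=-\overline C$.

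\emph{The data.} Since $\Im(A-C)=0$, the field $\tfrac12(A-C)$ is real symmetric; I claim it is a Hessian (a structure equation, see below), so — $U$ being simply connected — there is a real analytic real-valued $f$ with $\operatorname{Hess}f=\tfrac12(A-C)$. With such an $f$, let $\vartheta$ record the linear term of $\phi(\alpha+u,\alpha+v)-f(\alpha+u)+f(\alpha+v)$ (namely $\vartheta=-(\psi+df)$, a real $1$-form), put $\omega:=d\vartheta$, and set $g_\alpha:=-i\bigl(A(\alpha)-\operatorname{Hess}f(\alpha)\bigr)=\tfrac i2(B+B^\top)|_\alpha$; then $g$ is complex symmetric with $\Re g=S>0$, hence non-degenerate, and the antisymmetric (in $u\leftrightarrow v$) part of the quadratic term is $\omega_\alpha(u,v)=\tfrac12u^\top(B-B^\top)v$. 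A direct computation, using $\partial_\alpha\psi=B+C$ together with $A+B+B^\top+C=0$, then verifies that $\omega_\alpha=(d\vartheta)_\alpha$ and that \eqref{eq:Taylor-the-phase} holds.

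\emph{The reproducing property produces $J$.} Fix the base point $\alpha$, put $\beta=\alpha+w$, and let $\Psi_w(\gamma):=\phi(\alpha,\gamma)+\phi(\gamma,\beta)$. Since $\Psi_w(\alpha)=\phi(\alpha,\alpha)+\phi(\alpha,\beta)=\phi(\alpha,\beta)$, equation \eqref{eq:reproducing} says precisely that the critical value of $\Psi_w$ equals $\Psi_w(\alpha)$ — the critical point being that of the holomorphic extension of $\phi$, which is legitimate by real analyticity and unique by (2) (which also makes $A+C=-(B+B^\top)$ invertible, as $\Im(A+C)=2S>0$). Expanding the critical value about $\gamma=\alpha$, with $\nabla\Psi_w(\alpha)=Bw+O(w^2)$, this becomes at second order in $w$ the single identity $B^\top(A+C)^{-1}B=0$, i.e., with $P:=B+B^\top$, $Q:=B-B^\top$ and $B=\tfrac12(P+Q)$ (so that $B^\top P^{-1}B=\tfrac14(P-QP^{-1}Q)$):
\[
B^\top P^{-1}B=0,\qquad\text{equivalently}\qquad QP^{-1}Q=P .
\]
Let $J$ be defined by $g(Ju,v)=\omega(u,v)$, legitimate since $g$ is non-degenerate; in matrices $J=-G^{-1}\Omega$ with $G=\tfrac i2 P$, $\Omega=\tfrac12 Q$, and $J^2=-1\iff\Omega G^{-1}\Omega=-G\iff QP^{-1}Q=P$. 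Hence $J^2=-1$, and $g(Ju,Jv)=u^\top\Omega^\top G^{-1}\Omega v=-u^\top(\Omega G^{-1}\Omega)v=u^\top Gv=g(u,v)$ using $\Omega^\top=-\Omega$. Since $QP^{-1}Q=P$ forces $Q$, hence $\omega$, to be invertible, $m=2n$ is even and $\omega=d\vartheta$ is an exact symplectic form. Finally, in the self-adjoint case $A=-\overline C$ together with $A=C$ (which holds after the gauge) makes $A$ purely imaginary, so $g=S$ is real and $(\omega,g,J)$ is almost K\"ahler.

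\emph{Main obstacle.} Two points require genuine work. First, the second-order expansion of \eqref{eq:reproducing}: one must handle the critical point $\gamma_c(\alpha,\beta)$ of the holomorphic extension carefully and then recognize the resulting matrix relation as $J^2=-1$ — short, but the only place where the reproducing property is really used. Second, the structure equations invoked above — that $\tfrac12(A-C)$ is a Hessian and that $\omega_\alpha=(d\vartheta)_\alpha$ — which I expect follow by differentiating \eqref{eq:reproducing} (or $\phi(\alpha,\alpha)=0$) once more in the base point, and which is where simple connectedness of $U$ enters to produce $f$. The remaining items ($\psi$ real, $\Im A=\Im C>0$, $\Im B=-S$, the self-adjoint reductions) are routine with the conventions fixed in the preamble above.
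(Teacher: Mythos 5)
Your proposal is correct and reaches the same endpoint as the paper, but the decisive step is implemented differently. The paper first proves a lemma asserting that the quadratic Taylor polynomial $\phi^\alpha$ of $\phi$ at a diagonal point is itself a projector phase (by matching $\gamma_c^\alpha$ with $d_{(\alpha,\alpha)}\gamma_c$), then rescales by $L=D^{1/2}$ and computes the critical point and critical value of the quadratic model exactly, specializing to $v=-u$ to get $(\tilde Ju)^2=u^2$, hence $\tilde J^2=-1$. You instead expand \eqref{eq:reproducing} directly to second order in $w=\beta-\alpha$ with the first argument frozen on the diagonal, obtaining the single matrix identity $B^\top(A+C)^{-1}B=0$, which you correctly translate (via $A+C=-(B+B^\top)$) into $QP^{-1}Q=P$, i.e.\ $J^2=-1$ for $J=-G^{-1}\Omega$; this is the same identity the paper's computation produces ($RD^{-1}R=-D$), reached without the intermediate lemma or the square root $L$. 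Your route is the more economical one; the paper's detour through ``$\phi^\alpha$ is a projector phase'' has the advantage of setting up objects ($\psi^\alpha$, $\tilde J$, the spectral decomposition of $\tilde J$) that are reused later (e.g.\ in the corollary that $\rank\partial_\alpha\partial_\beta\phi=n$ and in the study of $\mathcal{J}_{\psi^\alpha}$). Your extraction of $\Im A=\Im C>0$ and $\Im B$ symmetric purely from the pointwise second-order expansion of hypothesis (2) (rather than from the reality of the $1$-form $\vartheta$ and of its derivative along the diagonal, as the paper does) is also valid: positivity of the quadratic form in $(u,v)$ restricted to $u=v+s$ kills the antisymmetric part of $\Im B$.

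One caveat, which you share with the paper: the assertion that $\tfrac12(A-C)$ is a Hessian. The paper justifies it only by ``$P$ is real symmetric and $U$ is simply connected,'' which is not sufficient for an arbitrary symmetric matrix field (the integrability condition is full symmetry of $\partial_\ell P_{jk}$ in all three indices, which requires third-order information from the axioms). You correctly flag this as an outstanding obligation and guess the right source (differentiating the diagonal identities once more), so you are no worse off than the paper here, but neither argument actually discharges it.
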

Observe that $U$ is always symplectic in the usual sense, while the 2-tensor $g$ may be complex valued when $\phi$ is not self-adjoint. The rest of this section is devoted to proving Proposition \ref{prop:geometric-objects}. We start by introducing some notations:
\begin{equation}\label{eq:notations}
\vartheta:\alpha\mapsto \vartheta_\alpha = \partial_\alpha \phi_{|\beta=\alpha},\quad \omega = d\vartheta. 
\end{equation}
From the assumptions, we know that $\vartheta$ is a real valued $1$-form, so that $\omega$ is a closed real $2$-form, but we do not know a priori that it is non-degenerate.

Let us also denote (where the derivatives are taken at $\beta =\alpha$)
\[
A = \partial_\alpha^2\phi,\ B = \partial_\alpha\partial_\beta \phi,\ C = \partial_\beta^2\phi. 
\]
By definition, we have 
\begin{equation*}
\begin{split}
\phi(\alpha+u,\alpha+v) &= \partial_\alpha\phi\cdot u + \partial_\beta\phi\cdot v \\
		& + \frac{1}{2}\left( u^\top\cdot A \cdot u + 2 u^\top \cdot B \cdot v + v^\top \cdot C \cdot v\right) + \mathcal{O}(u,v)^3.
\end{split}
\end{equation*}
Differentiating the condition that $\phi(\alpha,\alpha) =0$, once and twice, we get that at $\beta=\alpha$,
\begin{equation}\label{eq:0-on-diagonal}
\partial_\alpha \phi = - \partial_\beta \phi,\ A+ B + B^\top + C = 0. 
\end{equation}
It is convenient to reorganize the expansion in terms of the matrices
\[
R = \frac{B^\top - B}{2},\, D =i\frac{ B + B^\top}{2},\, P= \frac{A-C}{2}.
\]
Constraint \eqref{eq:0-on-diagonal} rewrites as
\[
A+C = 2 i D.
\]
We can also write
\[
\vartheta_{\alpha+u} = \vartheta_\alpha + u^\top\cdot( A+ B^\top) + \mathcal{O}(u^2).  
\]
Since $\vartheta$ is real valued, we find that $A+ B^\top = A - iD + R = P + R$ is real. By definition, $P$ is symmetric, and $R$ is antisymmetric, so that both have to be real. Additionally, $R$ is the matrix of $\omega$, i.e
\[
\omega_\alpha(u,v) = u^\top \cdot R \cdot v.
\]
Since $P$ is a real symmetric matrix, and $U$ is simply connected, we can find a real valued primitive function $f$ so that $d^2 f = P$, and consider the new phase
\[
\phi_f (\alpha,\beta) = \phi(\alpha,\beta) - f(\alpha) + f(\beta). 
\]
The new phase satisfies the same assumptions as before, with $\vartheta$ replaced by $\vartheta - df$. However, now $P=0$, and $R$, $D$ are unchanged. We will make this assumption from now on.

Also, from the assumptions, we know that $D$ is symmetric with positive real part, so admits a symmetric square root $L$ -- with the determination that $L=1$ when $ D= 1$. Let us set
\[
g_\alpha(u,v)= u^\top \cdot D \cdot v,
\]
and
\[
J_\alpha u = - D^{-1} R u,\quad \tilde{J} = - L^{-1} R L^{-1}. 
\]
With these notation, we have ensured Formula \eqref{eq:Taylor-the-phase}. To complete the proof of Proposition \ref{prop:geometric-objects}, it remains to prove that $J^2=-1$, and that $\omega$ is non-degenerate. From the relation between $g$, $J$ and $\omega$, it suffices to prove that $J^2=-1$. For this we start with the observation:
\begin{lemma}
Let $\phi$ be a projector phase on $U$, and let $\alpha\in U$. Then the quadratic Taylor polynomial of $\phi$ at $(\alpha,\alpha)$ is a projector phase on $\R^m$. 
\end{lemma}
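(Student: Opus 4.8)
The plan is to realize $Q$, the quadratic Taylor polynomial of $\phi$ at $(\alpha,\alpha)$, as the rescaled limit $\lim_{t\to 0^+}t^{-2}\phi(tu,tv)$, and to transport the three axioms through that limit. After translating so that $\alpha=0$ --- which visibly preserves the class of projector phases and moves the base point of the expansion to $(0,0)$ --- I would first record the elementary \emph{shift invariance}: for any real-valued $C^2$ function $\psi$, the function $\phi(x,y)-\psi(x)+\psi(y)$ again satisfies the three axioms, since this alters neither $\Im\phi$ nor the map $\gamma\mapsto\phi(x,\gamma)+\phi(\gamma,y)$ except by the $\gamma$-independent term $-\psi(x)+\psi(y)$; hence it changes neither the critical point $\gamma_c$ nor the identity \eqref{eq:reproducing}. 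The very same computation applies to polynomial phases, so applying $\phi\mapsto\phi-\psi(x)+\psi(y)$ sends $Q$ to $Q-\psi(x)+\psi(y)$ and preserves the property of \emph{being} a projector phase among quadratic polynomials. Choosing $\psi(x)=\vartheta_0\cdot x$, i.e.\ the (real) linear part of $\phi$ at $0$, which by \eqref{eq:0-on-diagonal} equals $\vartheta_0\cdot(x-y)$, I may thus assume that $\phi$ has vanishing linear part at $0$, so that $Q$ is a homogeneous quadratic form on $\R^m\times\R^m$.

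For small $t>0$ set $\phi_t(u,v):=t^{-2}\phi(tu,tv)$, a phase defined on a neighbourhood of $(0,0)$ that exhausts a fixed neighbourhood as $t\to 0$. Each $\phi_t$ inherits $\phi_t(u,u)=t^{-2}\phi(tu,tu)=0$ and, with the \emph{same} constant $C$, the bound $\Im\phi_t(u,v)=t^{-2}\Im\phi(tu,tv)\ge t^{-2}|tu-tv|^2/C=|u-v|^2/C$; and, since $\phi$ has no linear part at $0$, Taylor's theorem (with Cauchy estimates on a fixed complex neighbourhood of $(0,0)$ to which $\phi$ extends holomorphically) gives $\phi_t\to Q$ as $t\to 0^+$, locally uniformly together with all derivatives. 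Passing to the pointwise limit yields $Q(u,u)=0$ and $\Im Q(u,v)\ge|u-v|^2/C$ for all $u,v$, i.e.\ axioms (1) and (2) for $Q$; in particular, by the remark after the definition, $\mu\mapsto Q(u,\mu)+Q(\mu,v)$ has a unique critical point $\gamma_c^{Q}(u,v)$ --- unique even globally, this being a quadratic function of $\mu$ with Hessian $A+C=2iD$, which is invertible because $\Re D>0$.

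It remains to transport \eqref{eq:reproducing}. Starting from the reproducing identity for $\phi$ at the points $tu,tv$ and substituting $\gamma=t\mu$, one sees that $\mu_c^{(t)}(u,v):=t^{-1}\gamma_c(tu,tv)$ is precisely the critical point close to $u,v$ of $\mu\mapsto\phi_t(u,\mu)+\phi_t(\mu,v)$, and dividing the identity by $t^{2}$ gives
\[
\phi_t(u,v)=\phi_t\!\big(u,\mu_c^{(t)}(u,v)\big)+\phi_t\!\big(\mu_c^{(t)}(u,v),v\big).
\]
As $t\to 0$ the critical-point equation converges to that of $Q$, whose $\mu$-Hessian $2iD$ is non-degenerate, so the implicit function theorem gives $\mu_c^{(t)}(u,v)\to\gamma_c^{Q}(u,v)$; letting $t\to 0$ in the displayed identity produces $Q(u,v)=Q\!\big(u,\gamma_c^{Q}(u,v)\big)+Q\!\big(\gamma_c^{Q}(u,v),v\big)$ for $(u,v)$ near the diagonal, which --- both sides being polynomial --- is exactly \eqref{eq:reproducing} for $Q$. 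I expect the only genuinely delicate point to be the uniformity as $t\to 0$: one must check that $\phi_t\to Q$ in a strong enough sense in the analytic category, and that $\mu_c^{(t)}(u,v)$ stays inside the region ``close to $u,v$'' required by item (3) uniformly in $t$ --- both consequences of the uniformity of $C$ in axiom (2), i.e.\ of a uniform implicit function theorem around the fixed non-degenerate Hessian $2iD$. Finally, since the data $D$ and $R$, hence $J=-D^{-1}R$, are unchanged by the translation and by the subtraction of $\psi$, this lemma is indeed the correct reduction on the way to the remaining claim $J^2=-1$.
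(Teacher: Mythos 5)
Your proof is correct, and it takes a genuinely different route from the paper's. The paper argues directly at the level of finite jets: it first identifies the critical point $\gamma_c^\alpha$ of the quadratic model with the differential $d_{(\alpha,\alpha)}\gamma_c$ of the critical point of $\phi$, then observes that the reproducing identity for $\phi$ forces the same identity for the quadratic Taylor polynomial up to an error $\mathcal{O}(u,v)^3$, and concludes because both sides are quadratic polynomials in $(u,v)$ (items (1) and (2) being regarded as immediate). You instead normalize the linear part away by the gauge $\phi\mapsto\phi-\psi(\alpha)+\psi(\beta)$ --- the same transformation the paper uses a little later with the primitive $f$, though there only to kill the symmetric part $P$ --- and realize $Q$ as the parabolic scaling limit $t^{-2}\phi(t\,\cdot,t\,\cdot)$, checking that each rescaled phase is itself a projector phase with uniform constants and that all three axioms survive the limit. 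The paper's computation is shorter and purely algebraic; your argument isolates cleanly where uniformity enters (the fixed constant $C$ in axiom (2), the fixed non-degenerate Hessian $2iD$ with $\Re D>0$), treats the three axioms on the same footing rather than only (3), and would adapt verbatim to lower regularity since only $C^3$ convergence of $\phi_t$ is really used. The one point to write out in full is the one you flag yourself: after rescaling, $\mu_c^{(t)}(u,v)=t^{-1}\gamma_c(tu,tv)$ is a priori only known to be unique among $\mu$ with $t\mu$ close to $tu,tv$, so its convergence to $\gamma_c^{Q}(u,v)$ and its uniqueness in a fixed bounded region must come from the uniform invertibility of $\partial_\beta^2\phi_t(u,\mu)+\partial_\alpha^2\phi_t(\mu,v)\to 2iD$; this is exactly the uniform implicit function theorem you invoke, so the remaining gap is expository rather than mathematical.
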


\begin{proof}
It suffices to prove item (3), the reproducing property. Let us denote $\phi^\alpha(u,v)$ the quadratic Taylor polynomial of $\phi$ at $(\alpha,\alpha)$, and $\gamma^\alpha_c(u,v)$ the critical point of 
\[
\gamma\mapsto \phi^\alpha(u,\gamma)+\phi^\alpha(\gamma,v).
\]
Certainly, $\gamma^\alpha_c$ satisfies
\[
\partial_v \phi^\alpha(u,\gamma^\alpha_c)+\partial_u\phi^\alpha(\gamma^\alpha_c,v) = 0. 
\]
This is a linear equation, and it is of the form $D \gamma^\alpha_c = \dots$, so that since $D$ is invertible, $(\beta,\delta)\mapsto \gamma^\alpha_c$ is uniquely defined and linear. On the other hand, we have
\begin{align*}
& 0 = \partial_\beta \phi(\alpha+u, \gamma_c(\alpha+u,\alpha+v)) + \partial_\alpha \phi(\gamma_c(\alpha+u,\alpha+v),\alpha+v)\\
	& = \partial_u \phi^\alpha(u,\gamma_c(\alpha+u,\alpha+v))+\partial_v\phi^\alpha(\gamma_c(\alpha+u,\alpha+v),v) + \mathcal{O}(u, v)^2,\\
	& = \partial_v \phi^\alpha(u,d_{(\alpha,\alpha)}\gamma_c(u,v))+\partial_v \phi^\alpha(d_{(\alpha,\alpha)}\gamma_c(u,v),v)+ \mathcal{O}(u,v)^2.
\end{align*}
We deduce that
\[
\gamma^\alpha_c(u,v) = d_{(\alpha,\alpha)}\gamma_c(u,v).
\]
Then,
\begin{align*}
\phi^\alpha(u,v) &= \phi(\alpha+u,\alpha+v) + \mathcal{O}( u,v)^3 \\
						&= \phi(\alpha+u,\gamma_c)+ \phi(\gamma_c ,\alpha+v) + \mathcal{O}(u,v)^3 \\
						&= \phi^\alpha(u,\gamma_c-\alpha)+ \phi^\alpha(\gamma_c-\alpha,v) + \mathcal{O}( u,v)^3 \\
						&= \phi^\alpha(u,\gamma_c^\alpha)+ \phi^\alpha(\gamma_c^\alpha ,v) + \mathcal{O}(u,v)^3
\end{align*}
(in the last line, we used that $\gamma_c^\alpha$ is the critical point).
\end{proof}

Using the computations and identities above, we can rewrite the taylor expansion as
\[
\psi^\alpha(u,v):=\phi^\alpha(L^{-1} u, L^{-1} v) = \vartheta_\alpha L^{-1}(u-v) + \frac{i}{2}(u-v)^2  + \langle \tilde{J}u, v \rangle. 
\]

\begin{proposition}\label{prop:quadratic-behaviour}
For $\alpha\in U$, 
\[
J^2 = -1. 
\]
In particular, $J$ (and $R$) are invertible, $\omega$ is symplectic, and $(\omega,g,J)$ is an almost K\"ahler triple (except that $g$ may be complex valued in the non-self-adjoint case). 
\end{proposition}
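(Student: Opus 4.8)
The plan is to exploit the reduction already in place: by the preceding Lemma, the quadratic Taylor polynomial $\phi^\alpha$ is itself a projector phase on $\R^m$, so the reproducing property \eqref{eq:reproducing} holds \emph{exactly} (not just to third order) for the quadratic model. After the linear change of variables $u\mapsto L^{-1}u$ normalizing $D$ to the identity, the phase takes the clean form
\[
\psi^\alpha(u,v) = \vartheta_\alpha L^{-1}(u-v) + \tfrac{i}{2}(u-v)^2 + \langle \tilde J u, v\rangle,
\]
with $\tilde J = -L^{-1}RL^{-1}$ antisymmetric. The idea is that the reproducing property for $\psi^\alpha$ becomes a purely algebraic identity on the matrix $\tilde J$, and that identity should force $\tilde J^2 = -1$; since $J = L^{-1}\tilde J L$, this gives $J^2 = -1$.

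First I would discard the linear term $\vartheta_\alpha L^{-1}(u-v)$: it is a coboundary of the form $g(u)-g(v)$ with $g$ linear, so (as in the passage to $\phi_f$) it does not affect the reproducing property and we may assume $\vartheta_\alpha = 0$. Then I would compute the critical point of $\gamma\mapsto \psi^\alpha(u,\gamma)+\psi^\alpha(\gamma,v)$ explicitly. The critical-point equation is linear: differentiating in $\gamma$ gives something of the shape $2i\gamma - \tilde J^\top(u) \text{-ish} + \cdots = $ (terms in $u,v$); solving yields $\gamma_c^\alpha$ as an explicit linear combination of $u$ and $v$ with coefficients that are rational functions of $\tilde J$ (invertibility of the relevant matrix comes from $\Im\psi^\alpha>0$ off the diagonal, i.e. from assumption (2), exactly as invoked in the Lemma). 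Substituting $\gamma_c^\alpha$ back into $\psi^\alpha(u,\gamma_c^\alpha)+\psi^\alpha(\gamma_c^\alpha,v)$ and demanding that this equal $\psi^\alpha(u,v)$ identically in $(u,v)$ produces a system of matrix equations; comparing the coefficient of the "cross" term $\langle \cdot u, v\rangle$ and of the "diagonal" terms $u^2$, $v^2$ should collapse, after using antisymmetry of $\tilde J$ and symmetry of the identity block, to $\tilde J^2 = -1$. Once $J^2=-1$ is established, the remaining assertions are immediate: $J$ invertible forces $R = -DJ$ invertible, hence $\omega$ (with matrix $R$) is nondegenerate, hence symplectic; and the compatibility $g(Ju,v)=\omega(u,v)$ together with $J^2=-1$ gives $g(Ju,Jv)=g(u,v)$, so $(\omega,g,J)$ is an almost Kähler triple (with $g$ possibly complex-valued unless $\phi$ is self-adjoint, in which case $D$ is real positive and $g$ is a genuine metric).

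The main obstacle is the bookkeeping in the substitution step: the reproducing identity is quadratic in $\gamma_c^\alpha$, which is itself a matrix-valued linear function of $(u,v)$, so one must carefully expand $\psi^\alpha(u,\gamma_c^\alpha)+\psi^\alpha(\gamma_c^\alpha,v)$ and match it against $\tfrac i2(u-v)^2 + \langle\tilde J u,v\rangle$. Keeping track of which blocks are symmetric and which antisymmetric is essential — the wrong grouping obscures the cancellation. A cleaner alternative I would consider, to avoid brute force, is to diagonalize $\tilde J$: since $\tilde J$ is a real antisymmetric matrix it is conjugate (over $\R$) to a block-diagonal form with $2\times 2$ rotation-type blocks $\begin{pmatrix} 0 & \lambda\\ -\lambda & 0\end{pmatrix}$, and the reproducing identity should decouple across these blocks, reducing everything to the $2\times 2$ (indeed one-complex-dimensional) case where one checks by hand that the only admissible value is $\lambda = 1$, i.e. $\tilde J^2 = -1$ on each block. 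This also explains \emph{en passant} why $m$ must be even, recovering that statement of Proposition \ref{prop:geometric-objects}.
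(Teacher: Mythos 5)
Your main argument is essentially the paper's proof: the paper likewise reduces to the exact reproducing property for the quadratic model $\psi^\alpha$, computes the critical point explicitly (it is $w = \tfrac{u+v}{2} + \tfrac{i}{2}\tilde J(v-u)$), and then, instead of matching all coefficients, simply specializes the resulting identity to $v=-u$ to get $(\tilde Ju)^2 = u^2$, whence $\tilde J^2=-1$ by antisymmetry. One caveat on your proposed ``cleaner alternative'': in the non-self-adjoint case $L$, and hence $\tilde J = -L^{-1}RL^{-1}$, is complex (only $\tilde J^\top=-\tilde J$ holds, not realness), so the real block-diagonalization into rotation blocks is not available in general, and you should stick with the direct coefficient computation.
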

This closes the proof of Proposition \ref{prop:geometric-objects}.

\begin{proof}
For this we will use the reproducing property. Since it holds for $\phi^\alpha$, it also holds for $\psi^\alpha$. Let us find an expression for the critical point of
\[
w\mapsto \psi^\alpha(u,w)+\psi^\alpha(w,v).
\]
Direct computation gives
\[
w = \frac{u+v}{2} + \frac{i}{2}\tilde{J}(v-u), 
\]
so that the critical value is
\[
\vartheta_\alpha L^{-1}(u-v) + \frac{i}{4}(u-v)^2 + \frac{i}{4}(\tilde{J}(u-v))^2 + \frac{1}{2}\langle u+v,v-u\rangle.
\]
This should be equal to $\psi^\alpha(u,v)$. Choosing $v = -u$, this leads to 
\[
(\tilde{J}u)^2 = u^2, 
\]
whence $\tilde{J}^2 = -1$, because $\tilde{J}$ is anti-symmetric. Then $J^2 = L^{-1} \tilde{J}^2 L = -1$. 
\end{proof}

Since $\omega$ is symplectic, we deduce that $m=2n$, and we can find local Darboux coordinates $(x,\xi)$ near $\alpha$, so that 
\[
\vartheta_\beta = \xi dx, \quad \omega= dx\wedge d\xi. 
\]
In particular, this gives
\[
R = \begin{pmatrix}
 0 & - 1 \\ 1 & 0
\end{pmatrix},
\]

\begin{corollary}
The rank of $\partial_\alpha\partial_\beta \phi$ is $n$ on the diagonal.
\end{corollary}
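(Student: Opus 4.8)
The plan is to identify $\partial_\alpha\partial_\beta\phi$ on the diagonal with a matrix built out of $D$ and $R$, and then read off the rank from the relation $J^2=-1$. First I would note that passing from $\phi$ to the normalized phase $\phi_f$ does not affect any mixed second derivative, since $\partial_\alpha\partial_\beta\big(f(\alpha)-f(\beta)\big)=0$; hence on the diagonal $\partial_\alpha\partial_\beta\phi=\partial_\alpha\partial_\beta\phi_f=B$, and there is no loss in computing with the normalized phase, for which $P=0$.

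Next, splitting $B$ into its symmetric and antisymmetric parts and using the definitions $D=i(B+B^\top)/2$ and $R=(B^\top-B)/2$, one gets $B=-iD-R$. Since $R=-DJ$ (which is the definition of $J$, rearranged), this yields $B=D\,(J-i)$, where $i$ stands for $i$ times the identity. The matrix $D$ is invertible, being symmetric with positive real part, so $\rank B=\rank(J-i)$.

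Finally I would invoke Proposition \ref{prop:quadratic-behaviour}: $J$ is a real endomorphism of $\R^{2n}$ with $J^2=-1$. Over $\C$ its minimal polynomial $X^2+1$ has simple roots, so $\C^{2n}=\ker(J-i)\oplus\ker(J+i)$; these two eigenspaces are exchanged by complex conjugation because $J$ is real, hence each has dimension $n$. Therefore $\rank(J-i)=2n-n=n$, and $\rank B=n$, which is the claim.

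I do not expect any genuine obstacle: the only point requiring a moment's care is the (essentially trivial) remark that $B$ is insensitive to the gauge change $\phi\mapsto\phi_f$, and everything else is an immediate consequence of the identities already assembled in this subsection together with $J^2=-1$.
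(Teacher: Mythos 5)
Your reduction is the same as the paper's: on the diagonal $\partial_\alpha\partial_\beta\phi=B=-R-iD=D(J-i)$ with $D$ invertible, so everything hinges on showing $\rank(J-i)=n$, i.e.\ that each eigenspace of $J$ has dimension exactly $n$. Up to that point the argument is fine (and the remark that $B$ is insensitive to the gauge change $\phi\mapsto\phi_f$ is correct, if not strictly needed).

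The gap is in the last step. You assert that $J$ is a \emph{real} endomorphism of $\R^{2n}$, so that complex conjugation exchanges $\ker(J-i)$ and $\ker(J+i)$. But $J=-D^{-1}R$, and while $R$ is always real, $D$ is only symmetric with \emph{positive real part}; it is genuinely complex unless $\phi$ is self-adjoint. Indeed the statement of Proposition \ref{prop:geometric-objects} is explicit that $J$ is an endomorphism of $TU\otimes\C$ and that $g$ is complex valued in general, so your conjugation argument only covers the self-adjoint case. The paper's way around this is to pass to $\tilde J=-L^{-1}RL^{-1}=LJL^{-1}$ (where $L^2=D$, $L$ symmetric), which is antisymmetric for the standard \emph{bilinear} (not Hermitian) pairing; then each eigenspace $E_{\pm i}$ of $\tilde J$ is isotropic, $E_{\pm i}\subset E_{\pm i}^\perp$, hence of dimension at most $n$, and since $\tilde J^2=-1$ makes $\tilde J$ diagonalizable with $E_i\oplus E_{-i}=\C^{2n}$, each has dimension exactly $n$. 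Equivalently, you could note that $J$ itself is antisymmetric for the nondegenerate symmetric form $g$ (from $g(Ju,v)=\omega(u,v)=-\omega(v,u)=-g(u,Jv)$), and run the same isotropy argument directly on $J$. Either fix replaces the reality claim and closes the proof.
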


\begin{proof}
From the computations above, we see that
\[
\partial_\alpha \partial_\beta \phi(\alpha,\alpha) = B =  - R - i D =  L \tilde{J} L - i L^2 = L(\tilde{J}-i)L. 
\]
But since $\tilde{J}^2 = -1$, $\tilde{J}$ is diagonalizable, and its eigenvalues are $\pm i$ with eigenspaces $E_{\pm i}$. Then because $\tilde{J}$ is anti-symmetric, $E_{\pm i}\subset E_{\pm i}^\perp$. By consideration of dimension, each $E_{\pm i}$ has dimension $n$, and we are done.
\end{proof}

\subsubsection{The involutive manifold}

Let us define subsets of $T^\ast U$:
\begin{align*}
\mathcal{J}_\phi &= \{ (\alpha, \partial_\alpha \phi(\alpha,\beta)) \ |\ \text{$\alpha$ and $\beta$ close to the diagonal in $U\times U$} \},\\
\mathcal{J}_\phi^\ast &= \{ (\beta, - \partial_\beta \phi(\alpha,\beta)) \ |\ \text{$\alpha$ and $\beta$ close to the diagonal in $U\times U$} \}.\\
\Sigma_\phi &= \{ (\alpha, \vartheta_\alpha)\ |\  \alpha\in U\}. 
\end{align*}
\begin{proposition}\label{prop:involutive}
The sets $\mathcal{J}_\phi$, $\mathcal{J}_\phi^\ast$ are germs of complex analytic submanifolds of $\C^{4n}$, of complex dimension $3n$, involutive for the standard symplectic form. Additionally, 
\[
\mathcal{J}_\phi \cap \mathcal{J}_\phi^\ast = \Sigma_\phi
\]
is a transversal intersection. If $\phi$ is self-ajoint, 
\[
\mathcal{J}_\phi^\ast = \overline{\mathcal{J}_\phi}.
\]
\end{proposition}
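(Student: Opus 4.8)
The plan is to realize $\mathcal{J}_\phi$ and $\mathcal{J}_\phi^\ast$ as the two projections of a single Lagrangian submanifold attached to $\phi$ — its canonical relation — and to read off the geometric statements from a general fact about projections of Lagrangians together with the quadratic normal form of Proposition \ref{prop:quadratic-behaviour}. The general fact I will use is elementary: if $C$ is a Lagrangian submanifold of a product $(M,\sigma)\times(M',\sigma')$ for $\sigma\ominus\sigma'$, and the projection $\pi\colon C\to M$ has constant rank near a point $p$, then the image $\pi(C)$ is coisotropic at $q=\pi(p)$; indeed $v\in(T_q\pi(C))^{\sigma}$ iff $(v,0)$ is $\sigma\ominus\sigma'$-orthogonal to $T_pC$, iff $(v,0)\in(T_pC)^{\sigma\ominus\sigma'}=T_pC$ by the Lagrangian property, and then $v=d\pi(v,0)\in T_q\pi(C)$. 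Concretely, I replace $\phi$ by its holomorphic extension to a neighbourhood of the diagonal in $\C^m\times\C^m$ and set
\[
C_\phi=\bigl\{(\alpha,\partial_\alpha\phi(\alpha,\beta);\,\beta,-\partial_\beta\phi(\alpha,\beta))\bigr\}\subset T^\ast\C^m\times T^\ast\C^m .
\]
Since the differential of $(\alpha,\beta)\mapsto(\alpha,\partial_\alpha\phi;\beta,-\partial_\beta\phi)$ contains the two identity blocks coming from $\partial_\alpha\alpha$ and $\partial_\beta\beta$, this map is an immersion and $C_\phi$ is a germ of complex analytic submanifold of complex dimension $2m=4n$; because $\xi\,d\alpha-\eta\,d\beta$ pulls back under it to $d\phi$, the form $\sigma\ominus\sigma$ pulls back to $0$, so $C_\phi$ is Lagrangian. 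By definition $\mathcal{J}_\phi=\pi_L(C_\phi)$ and $\mathcal{J}_\phi^\ast=\pi_R(C_\phi)$, the images of $C_\phi$ under the two projections onto $T^\ast\C^m$.

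The crux is then the dimension count: I claim $\rank\,\partial_\alpha\partial_\beta\phi=n$ on a full neighbourhood of the diagonal. The inequality $\geq n$ is automatic, since the rank is lower semicontinuous and equals $n$ on the diagonal by the Corollary above. For $\leq n$ I would use the reproducing property in its differentiated form $\partial_\alpha\phi(\alpha,\beta)=\partial_\alpha\phi(\alpha,\gamma_c(\alpha,\beta))$ (together with the symmetric identity on $\partial_\beta\phi$); this exhibits $\partial_\alpha\partial_\beta\phi(\alpha,\beta)$ as factoring through $\partial_\beta\gamma_c(\alpha,\beta)$, and it remains to see that $\gamma\mapsto\gamma_c(\alpha,\gamma)$ is a germ of retraction onto an $n$-dimensional submanifold — which is the case for the quadratic model, where $d\gamma_c$ is the projector $\tfrac12(\mathrm{Id}\mp iJ)$, so $\gamma_c(\alpha,\cdot)$ is genuinely idempotent there — whence $\rank\,\partial_\beta\gamma_c\equiv n$. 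Granting this, $\pi_L|_{C_\phi}$ and $\pi_R|_{C_\phi}$ have constant rank $2n+n=3n$, so by the constant rank theorem $\mathcal{J}_\phi$ and $\mathcal{J}_\phi^\ast$ are germs of complex analytic submanifolds of dimension $3n$, and by the general fact above they are coisotropic, i.e. involutive.

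Next, $\Sigma_\phi\subset\mathcal{J}_\phi\cap\mathcal{J}_\phi^\ast$ by taking $\beta=\alpha$. To see this intersection is transversal and not larger I compute tangent spaces at a diagonal point $(\alpha,\vartheta_\alpha)$ from the quadratic data: with $B=\partial_\alpha\partial_\beta\phi(\alpha,\alpha)=L(\tilde J-i)L$ one gets $T_{(\alpha,\vartheta_\alpha)}\mathcal{J}_\phi=T\Sigma_\phi\oplus(\{0\}\times\ran B^\top)$ and $T_{(\alpha,\vartheta_\alpha)}\mathcal{J}_\phi^\ast=T\Sigma_\phi\oplus(\{0\}\times\ran B)$, and since $\tilde J^2=-1$ the subspaces $\ran B$ and $\ran B^\top$ are, up to the isomorphism $L$, the two eigenspaces of $\tilde J$, hence complementary in $\C^m$. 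Therefore the two tangent spaces span $\C^{4n}$, the intersection is transversal, its dimension is $3n+3n-4n=2n=\dim\Sigma_\phi$, and so it equals $\Sigma_\phi$ as a germ. Finally, in the self-adjoint case I complexify $\phi(\alpha,\beta)=-\overline{\phi(\beta,\alpha)}$ into the identity $\phi(\alpha,\beta)=-\overline{\phi(\bar\beta,\bar\alpha)}$ of holomorphic functions; conjugating a point $(\alpha,\partial_\alpha\phi(\alpha,\beta))$ of $\mathcal{J}_\phi$ then produces $(\bar\alpha,-\partial_{\bar\alpha}\phi(\bar\beta,\bar\alpha))$, which is exactly the point of $\mathcal{J}_\phi^\ast$ attached to the pair $(\bar\beta,\bar\alpha)$. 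Hence $\overline{\mathcal{J}_\phi}\subset\mathcal{J}_\phi^\ast$, and since both are germs of submanifolds of the same dimension, $\overline{\mathcal{J}_\phi}=\mathcal{J}_\phi^\ast$.

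The step I expect to be the real obstacle is the constant-rank statement of the second paragraph: lower semicontinuity gives only one inequality, and excluding a rank jump away from the diagonal genuinely requires exploiting the reproducing property (the retraction/idempotency of $\gamma_c$), without which $\mathcal{J}_\phi$ could a priori fail to be a manifold or acquire the wrong dimension. Everything else is either formal (the construction and Lagrangian property of $C_\phi$, the coisotropy lemma, the conjugation symmetry) or a finite-dimensional linear-algebra computation with the quadratic model (the transversality at the diagonal).
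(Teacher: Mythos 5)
Your architecture is sound and, modulo one step, parallel to the paper's: everything reduces to showing that $\rank \partial_\alpha\partial_\beta\phi$ is constant equal to $n$ in a full neighbourhood of the diagonal, and you correctly isolate this as the crux. The formal ingredients are all fine --- the Lagrangian $C_\phi$ and the coisotropy of its constant-rank projections (the paper instead invokes Proposition \ref{prop:union-lagrangian}, a union-of-Lagrangians argument, which is equivalent), the linear algebra of $B=L(\tilde{J}-i)L$ giving transversality at diagonal points, and the conjugation symmetry in the self-adjoint case. Your derivation of $\mathcal{J}_\phi\cap\mathcal{J}_\phi^\ast=\Sigma_\phi$ as a germ from transversality plus a dimension count is also legitimate, and it actually reverses the paper's logical order: the paper first proves the intersection property set-theoretically (via the maps $f$ and $\delta$ of \S\ref{sec:associativity}, before any smoothness is known) and then \emph{uses} it to prove constant rank, whereas you get constant rank first and the intersection at the end.

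The gap is exactly where you suspect it, and it is not filled. You reduce $\rank\partial_\alpha\partial_\beta\phi\le n$ to the claim that $\beta\mapsto\gamma_c(\alpha,\beta)$ is a retraction, but you only verify idempotency ``for the quadratic model''. The quadratic model controls $d_\beta\gamma_c$ at the single point $\beta=\alpha$ and says nothing about $\gamma_c(\alpha,\gamma_c(\alpha,\beta))$ versus $\gamma_c(\alpha,\beta)$ for $\beta\neq\alpha$; a map whose differential at a fixed point is a projector need not be idempotent, and without idempotency the rank of $\partial_\beta\gamma_c$ could perfectly well jump above $n$ off the diagonal --- which is the very scenario you need to exclude. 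The identity $\gamma_c(\alpha,\gamma_c(\alpha,\beta))=\gamma_c(\alpha,\beta)$ is precisely \eqref{eq:associativity}, and its proof is the one genuinely non-formal part of the whole proposition: one writes $\phi=\phi\odot\phi=\phi\odot(\phi\odot\phi)$, uses the associativity of the variational composition $\odot$ together with the critical-point identities \eqref{eq:d-critique} to identify the inner critical points, and deduces \eqref{eq:associativity}. This uses the reproducing property \eqref{eq:reproducing} in an essential, non-infinitesimal way and cannot be extracted from the Taylor expansion at the diagonal. Once you have it, you should also record the small but necessary fact that a holomorphic retraction has locally constant rank near a fixed point (from $dr_{r(x)}\circ dr_x=dr_x$ one gets $\rank dr_x\le\rank dr_{r(x)}$, and at fixed points $dr$ is idempotent so its rank equals its trace, hence is locally constant); after that your argument closes.
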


\subsubsection{The intersection property}\label{sec:associativity}

In this section, we prove
\begin{lemma}
\begin{equation}\label{eq:intersection of jphi}
\mathcal{J}_\phi \cap \mathcal{J}_\phi^\ast = \Sigma. 
\end{equation}
\end{lemma}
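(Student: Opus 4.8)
The plan is to prove the two inclusions separately; the inclusion $\Sigma\subseteq\mathcal{J}_\phi\cap\mathcal{J}_\phi^\ast$ is immediate, while the reverse one I would reduce, fibre by fibre over $U$, to a transversality statement that comes down to $J^2=-1$. For the easy inclusion: taking $\beta=\alpha$ in the parametrization of $\mathcal{J}_\phi$ gives the point $(\alpha,\partial_\alpha\phi(\alpha,\alpha))=(\alpha,\vartheta_\alpha)$, and taking $\alpha=\beta$ in the parametrization of $\mathcal{J}_\phi^\ast$ together with $\partial_\alpha\phi=-\partial_\beta\phi$ on the diagonal (equation \eqref{eq:0-on-diagonal}) gives $(\beta,-\partial_\beta\phi(\beta,\beta))=(\beta,\vartheta_\beta)$. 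For the reduction: a point $(\alpha_0,\eta)$ lies in $\mathcal{J}_\phi$ exactly when $\eta$ belongs to the fibre
\[
N^+_{\alpha_0}:=\{\,\partial_\alpha\phi(\alpha_0,\gamma)\ :\ \gamma\text{ near }\alpha_0\,\}\subset T^\ast_{\alpha_0}U\otimes\C,
\]
and in $\mathcal{J}_\phi^\ast$ exactly when $\eta$ belongs to $N^-_{\alpha_0}:=\{-\partial_\beta\phi(\gamma,\alpha_0)\ :\ \gamma\text{ near }\alpha_0\}$; hence $\mathcal{J}_\phi\cap\mathcal{J}_\phi^\ast=\bigcup_{\alpha_0\in U}\{\alpha_0\}\times(N^+_{\alpha_0}\cap N^-_{\alpha_0})$, and it suffices to prove $N^+_{\alpha_0}\cap N^-_{\alpha_0}=\{\vartheta_{\alpha_0}\}$ for every $\alpha_0$.

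Now $\mathcal{J}_\phi$ and $\mathcal{J}_\phi^\ast$ are $3n$-dimensional submanifolds near $\Sigma$ (this is the part of Proposition \ref{prop:involutive} which I take as established independently of the present Lemma) and they project submersively onto $U$ — the map $(\alpha,\eta)\mapsto\alpha$ composed with the defining parametrization is a coordinate projection — so the fibres $N^\pm_{\alpha_0}$ are $n$-dimensional submanifolds of $T^\ast_{\alpha_0}U\otimes\C$, and by \eqref{eq:0-on-diagonal} both pass through $\vartheta_{\alpha_0}$ (attained at $\gamma=\alpha_0$). It is therefore enough that the tangent spaces $T_{\vartheta_{\alpha_0}}N^\pm_{\alpha_0}$ be transversal, i.e. complementary $n$-planes, for then the two $n$-manifolds meet transversally at $\vartheta_{\alpha_0}$ and, being germs near that point, meet nowhere else.

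These tangent spaces are, in the notation of \S\ref{sec:study-phase}: $T_{\vartheta_{\alpha_0}}N^+_{\alpha_0}=\ran B$, the image of the differential of $\gamma\mapsto\partial_\alpha\phi(\alpha_0,\gamma)$ at $\gamma=\alpha_0$, and likewise $T_{\vartheta_{\alpha_0}}N^-_{\alpha_0}=\ran B^\top$. Writing $B=-R-iD$, $B^\top=R-iD$ with $D$ invertible and $J=-D^{-1}R$, one factors $R+iD=iD(1+iJ)$ and $R-iD=-iD(1-iJ)$; since $J^2=-1$ (Proposition \ref{prop:quadratic-behaviour}), $1+iJ$ and $1-iJ$ are twice the spectral projections of $J$ onto $\ker(J+i)$ and $\ker(J-i)$ respectively, so $\ran B=D\ker(J+i)$ and $\ran B^\top=D\ker(J-i)$. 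As $\ker(J-i)\oplus\ker(J+i)=\C^{2n}$ and $D$ is invertible, these are complementary $n$-planes — the transversality needed — and $\mathcal{J}_\phi\cap\mathcal{J}_\phi^\ast=\Sigma$ follows.

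I expect the real work to be hidden in the regularity input rather than in this linear algebra: one genuinely needs $\mathcal{J}_\phi$ and $\mathcal{J}_\phi^\ast$ to be honest submanifolds in a full neighbourhood of $\Sigma$, equivalently that $\rank\partial_\alpha\partial_\beta\phi$ does not jump above $n$ as one moves off the diagonal (on the diagonal it equals $n$ by the corollary above). This is where the reproducing property re-enters in an essential way — through the factorizations $\partial_\alpha\phi(\alpha,\beta)=\partial_\alpha\phi(\alpha,\gamma_c(\alpha,\beta))$ and $\partial_\beta\phi(\alpha,\beta)=\partial_\beta\phi(\gamma_c(\alpha,\beta),\beta)$ obtained by differentiating \eqref{eq:reproducing} and using that $\gamma_c$ is a critical point. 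Once the submanifold structure is in hand, the argument above is purely formal.
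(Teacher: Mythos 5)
Your easy inclusion $\Sigma\subseteq\mathcal{J}_\phi\cap\mathcal{J}_\phi^\ast$ and the linear algebra on the diagonal ($\ran B$ and $\ran B^\top$ are the complementary $n$-planes $D\ker(J\pm i)$, via $J^2=-1$) are both correct and consistent with the paper. But the reduction to fibre-wise transversality has a genuine circularity that you flag yourself without resolving. Your argument needs $N^\pm_{\alpha_0}$ to be honest $n$-dimensional germs of submanifolds at $\vartheta_{\alpha_0}$ — equivalently, that $\rank\partial_\alpha\partial_\beta\phi$ stays equal to $n$ off the diagonal (it is only $\geq n$ by semicontinuity, and if it jumps above $n$ at points $\gamma$ arbitrarily close to $\alpha_0$, the image of $\gamma\mapsto\partial_\alpha\phi(\alpha_0,\gamma)$ need not be an $n$-manifold near $\vartheta_{\alpha_0}$, and the dimension count $n+n=2n$ no longer forces an isolated intersection). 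In the paper this constant-rank statement is proved \emph{after}, and \emph{from}, the identity \eqref{eq:intersection of jphi}: the key step \eqref{eq:constant-dtheta}, asserting that $\partial_\alpha\phi(\gamma,\beta)=-\partial_\beta\phi(\alpha,\gamma)=\vartheta_\gamma$ along the leaves $\mathcal{F}^c(\gamma)$, is exactly an invocation of the lemma you are trying to prove; differentiating that constancy is what caps the rank at $n$. The factorizations you propose to use instead (obtained by differentiating \eqref{eq:reproducing}) only relate the rank at $(\alpha,\beta)$ to the rank at $(\alpha,\gamma_c(\alpha,\beta))$, another off-diagonal point, so they do not by themselves deliver the bound. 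As written, your proof assumes a statement whose only available proof uses its conclusion.

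The paper's actual argument is entirely different and deliberately avoids any manifold structure: it is set-theoretic. One introduces the correspondence $f$ sending $(\beta,\xi)\in\mathcal{J}_\phi^\ast$ to the set of compatible points of $\mathcal{J}_\phi$, and the map $\delta(a,b)=(\gamma_c,\partial_\alpha\phi(\gamma_c,\beta))$, shows every point of $\mathcal{J}_\phi\cap\mathcal{J}_\phi^\ast$ is of the form $\delta(a,b)$, and then deduces $\delta(a,b)\in\Sigma$ from the idempotence identity \eqref{eq:associativity}, $\gamma_c(\alpha,\gamma_c(\alpha,\beta))=\gamma_c(\alpha,\beta)$, which in turn follows from the associativity of the stationary-value composition $\odot$ together with the reproducing property \eqref{eq:reproducing}. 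If you want to keep your geometric route, you would first have to give an independent proof of the constant rank of $\partial_\alpha\partial_\beta\phi$ near the diagonal; otherwise you should switch to an argument, like the paper's, that exploits \eqref{eq:reproducing} directly at the level of points rather than tangent spaces.
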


Let us introduce some notations. The phase $\phi$ defines a map $f$ from $\mathcal{J}_\phi^\ast$ to $\mathcal{P}( \mathcal{J}_\phi )$ (subsets of $\mathcal{J}_\phi$), given by
\[
f : (\beta, \xi) \mapsto \{ (\alpha, \partial_\alpha \phi(\alpha,\beta)) \ |\ \partial_\beta\phi(\alpha,\beta) = - \xi \}. 
\]
Now, from the definition of $\gamma_c=\gamma_c(\alpha,\beta)$, we have 
\begin{equation}\label{eq:d-critique}
\begin{split}
\partial_\beta \phi(\alpha,\gamma_c)	&= - \partial_\alpha\phi(\gamma_c,\beta) \\
\partial_\beta\phi(\gamma_c, \beta) 	&= \partial_\beta\phi(\alpha,\beta), \\ \partial_\alpha\phi(\alpha,\gamma_c) 	&= \partial_\alpha\phi(\alpha,\beta),
\end{split}
\end{equation}
so that 
\begin{equation*}
\begin{split}
(\alpha, \partial_\alpha\phi(\alpha,\gamma_c))&\in f(\gamma_c, \partial_\alpha\phi(\gamma_c,\beta))\\
(\gamma_c,\partial_\alpha\phi(\gamma_c,\beta))&\in f(\beta, - \partial_\beta \phi(\alpha,\beta)).
\end{split}
\end{equation*}
Let us denote
\[
\delta(\alpha,\partial_\alpha\phi(\alpha,\beta);\beta, - \partial_\beta\phi(\alpha,\beta)) = (\gamma_c,\  \partial_\alpha\phi(\gamma_c,\beta)).
\]
This defines a map $\delta$ from a subset of $\mathcal{J}_\phi \times \mathcal{J}_\phi^\ast$ to $\mathcal{J}_\phi\cap\mathcal{J}_\phi^\ast$. The identities above become
\begin{equation}\label{eq:identities-delta-f}
a\in f(b) \Longrightarrow \delta(a,b)\in f(b) \text{ and } a\in f(\delta(a,b)).
\end{equation}

Now, we observe that for $a\in f(b)$, $a$ and $b$ have the same first component if and only if $a=b\in \Sigma$. Indeed, $a\in f(b)$ means that there exist $\alpha,\beta$ such that
\[
a= (\alpha,\partial_\alpha\phi(\alpha,\beta))\text{ and } b=(\beta, -\partial_\beta\phi(\alpha,\beta)).
\]
Then $\alpha=\beta$ if and only if $a=b$, and then $a\in \Sigma$. On the other hand, it is easy to see that if $a\in\Sigma$, then $a\in f(a)$.

Now, if we have a point $c\in \mathcal{J}_\phi\cap\mathcal{J}_\phi^\ast$, it means that there exist $\gamma, \beta,\alpha,\gamma'$ such that
\[
c=(\gamma,\partial_\alpha \phi(\gamma,\beta))= (\gamma', -\partial_\beta\phi(\alpha,\gamma')),
\]
whence it comes that $\gamma=\gamma' = \gamma_c(\alpha,\beta)$, and $c=\delta(a,b)$ for some $a\in f(b)$. Hence the map $\delta$ is surjective.

We claim that
\begin{equation}\label{eq:associativity}
\gamma_c(\gamma_c(\alpha,\beta),\beta) = \gamma_c(\alpha,\gamma_c(\alpha,\beta)) = \gamma_c(\alpha,\beta). 
\end{equation}
If this is true, we are done. Indeed, it implies that if $a\in f(b)$, then for some $\alpha,\beta$,
\begin{align*}
\delta(a,\delta(a,b)) &= (\gamma_c(\alpha,\gamma_c(\alpha,\beta)), \partial_\alpha\phi(\gamma_c(\alpha,\gamma_c(\alpha,\beta)), \gamma_c(\alpha,\beta))),\\
					&= (\gamma_c(\alpha,\beta), \partial_\alpha\phi(\gamma_c(\alpha,\beta), \gamma_c(\alpha,\beta))) \in \Sigma.
\end{align*}
On the other hand, according to \eqref{eq:identities-delta-f}, $\delta(a,\delta(a,b))\in f(\delta(a,b))$. Since $\delta(a,b)$ and $\delta(a,\delta(a,b))$ share the same first component $\gamma_c(\alpha,\beta)$, they must actually be equal, and we deduce that $\delta(a,b)\in\Sigma$, which is what we wanted to prove. 

It remains to prove \eqref{eq:associativity}. We notice that the operation on (germs near the diagonal of) phase functions defined by
\[
\psi_1 \odot \psi_2(\alpha,\beta) = VC_\gamma (\psi_1(\alpha,\gamma)+\psi_2(\gamma,\beta)),
\]
is associative on the set of phases for which the critical value is uniquely defined via an implicit function argument. Indeed, $\psi_1 \odot ( \psi_2 \odot \psi_3)= (\psi_1 \odot \psi_2)\odot \psi_3$ at $(\alpha,\beta)$ is just
\[
VC_{\delta,\eta}\left[\psi_1(\alpha,\delta)+\psi_2(\delta,\eta)+\psi_3(\eta,\beta)\right]
\]
Then we write
\[
\phi(\alpha,\beta) = VC_{\delta,\eta}\left[ \phi(\alpha,\delta)+\phi(\delta,\eta)+\phi(\eta,\beta)\right],
\]
attained at $(\delta_c,\eta_c)$. We find
\[
\delta_c = \gamma_c(\alpha,\eta_c),\ \eta_c = \gamma_c(\delta_c, \beta).
\]
But according to \eqref{eq:d-critique}, we have $\partial_\alpha \phi(\delta_c,\eta_c) = \partial_\alpha\phi(\delta_c,\beta)$, so that
\begin{equation*}
\partial_\beta \phi(\alpha,\delta_c) = -\partial_\alpha \phi(\delta_c, \eta_c) = - \partial_\alpha\phi(\delta_c, \beta). 
\end{equation*} 
This implies $\delta_c = \gamma_c(\alpha,\beta)$, i.e \eqref{eq:associativity}.

\subsubsection{The involutive manifold \emph{is} a manifold}

We start by a lemma
\begin{lemma}
In the case of $\phi^\alpha$, the smoothness and transversal intersection parts of Proposition \ref{prop:involutive} hold.
\end{lemma}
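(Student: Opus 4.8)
The key observation is that $\phi^\alpha$ is a \emph{quadratic} polynomial in $(u,v)$, so $\partial_u\phi^\alpha$ and $\partial_v\phi^\alpha$ are affine in $(u,v)$, and therefore $\mathcal{J}_{\phi^\alpha}$, $\mathcal{J}^\ast_{\phi^\alpha}$ and $\Sigma_{\phi^\alpha}$ are (germs of) complex affine subspaces of $\C^{4n}$. The whole statement then reduces to linear algebra in the normalized Darboux coordinates fixed above.

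I would begin by recording explicit descriptions. With $\phi^\alpha(u,v) = \vartheta_\alpha\cdot(u-v) + \tfrac i2 (u-v)^\top D(u-v) + u^\top R v$ and the identities $R+iD=-B$, $R-iD=B^\top$ (immediate from $R=\tfrac12(B^\top-B)$, $D=\tfrac i2(B+B^\top)$), one finds
\[
\mathcal{J}_{\phi^\alpha} = \{\,(u,\ \vartheta_\alpha + iDu + B^\top v)\ :\ u,v\,\}, \qquad \mathcal{J}^\ast_{\phi^\alpha} = \{\,(v,\ \vartheta_\alpha - iDv - Bu)\ :\ u,v\,\},
\]
and $\Sigma_{\phi^\alpha} = \{(u,\ \vartheta_\alpha + Ru):u\}$. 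Being images of affine maps, the smoothness part is automatic once the dimensions are checked; since the image of $(u,v)\mapsto(u,\partial_u\phi^\alpha)$ has dimension $2n + \rank B^\top = 2n+n = 3n$ by the corollary above ($\rank\partial_\alpha\partial_\beta\phi = n$ on the diagonal, i.e.\ $\rank B = \dim E_{-i} = n$), we get $\dim_\C\mathcal{J}_{\phi^\alpha} = 3n$, symmetrically $\dim_\C\mathcal{J}^\ast_{\phi^\alpha} = 3n$, while $\dim_\C\Sigma_{\phi^\alpha} = 2n$. One also notes once and for all that $\Sigma_{\phi^\alpha}\subseteq\mathcal{J}_{\phi^\alpha}\cap\mathcal{J}^\ast_{\phi^\alpha}$, valid for any phase since $\vartheta_\alpha = \partial_\alpha\phi(\alpha,\alpha) = -\partial_\beta\phi(\alpha,\alpha)$.

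For the transversal intersection I would compute the two direction spaces $V = \{(\dot u, iD\dot u + w): \dot u\in\C^{2n},\ w\in\ran B^\top\}$ and $W = \{(\dot v, -iD\dot v + w'): \dot v\in\C^{2n},\ w'\in\ran B\}$. Diagonalizing $\tilde J$ as $E_{+i}\oplus E_{-i}$ (each of dimension $n$, by $\tilde J^2=-1$ and antisymmetry, exactly as in the rank corollary) and using $B=L(\tilde J-i)L$ gives $\ran B^\top = LE_{+i}$ and $\ran B = LE_{-i}$, so $\ran B \oplus \ran B^\top = \C^{2n}$. Feeding this into $V\cap W$: a point $(\dot u,\eta)\in V\cap W$ forces $w'-w = 2iD\dot u$ with $w\in LE_{+i}$, $w'\in LE_{-i}$, which by the direct sum decomposition determines $w,w'$ (hence $\eta$) uniquely from $\dot u$; thus $V\cap W$ is a graph over $\C^{2n}$, of dimension $2n$. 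As it contains the $2n$-dimensional $T\Sigma_{\phi^\alpha}$, it \emph{equals} it, so $\mathcal{J}_{\phi^\alpha}\cap\mathcal{J}^\ast_{\phi^\alpha}=\Sigma_{\phi^\alpha}$ and $\dim_\C(V+W) = 3n+3n-2n = 4n$: the intersection is transversal. (Alternatively, granting \eqref{eq:intersection of jphi} — which applies to $\phi^\alpha$ since the quadratic Taylor polynomial of a projector phase is again one — transversality is even shorter: the intersection of two affine $3n$-planes equals the $2n$-plane $\Sigma_{\phi^\alpha}$, and $2n=3n+3n-4n$ is precisely the transversality condition.)

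I do not anticipate a genuine obstacle: the only care needed is the bookkeeping of transpose and sign conventions ensuring $R\pm iD = \mp B^{(\top)}$ and $\ran B^{(\top)} = LE_{\mp i}$ come out consistently, after which everything is a dimension count. (Involutivity of $\mathcal{J}_{\phi^\alpha}$ is not required for this lemma, but would likewise be read off the explicit description, an affine $3n$-plane being coisotropic exactly when its $n$-dimensional symplectic orthogonal lies inside it.)
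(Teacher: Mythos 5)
Your proposal is correct and follows essentially the same route as the paper: write out the explicit affine descriptions of $\mathcal{J}_{\phi^\alpha}$, $\mathcal{J}^\ast_{\phi^\alpha}$, $\Sigma_{\phi^\alpha}$ and conclude by the spectral decomposition $\C^{2n}=E_{+i}\oplus E_{-i}$ of $\tilde{J}$ (the paper merely normalizes by $L$ first, working with $\psi^\alpha$, which is cosmetic). You supply the dimension counts and the identification $\ran B=LE_{-i}$, $\ran B^\top=LE_{+i}$ that the paper leaves implicit.
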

\begin{proof}
To prove this, it suffices to work with $\psi^\alpha$. In that case, we find that
\begin{align}
\mathcal{J}_{\psi^\alpha} &= \left\{(u, \vartheta_\alpha L^{-1} + i u - (\tilde{J}+i)v\ \middle|\ u,v\right\}, \\
\mathcal{J}_{\psi^\alpha}^\ast &= \left\{(v, \vartheta_\alpha L^{-1} + i v - (\tilde{J}-i)u\ \middle|\ u,v\right\}, 
\end{align}
The spectral decomposition of $\tilde{J}$ then ensures the statement is true.
\end{proof}

Now, if we can prove that $\partial_\alpha\partial_\beta\phi$ has constant rank $n$, the proof of Proposition \ref{prop:involutive} will be complete. Indeed, this will prove that $\mathcal{J}_\phi$ and $\mathcal{J}_\phi^\ast$ are smooth $3n$-dimensional manifolds. The transversality condition will carry on from the transversality for the $\phi^\alpha$'s. Finally, the fact that $\mathcal{J}_\phi$ is involutive, i.e. that
\[
\omega(u,v)=0,\ \forall v\in T \mathcal{J}_\phi  \ \Longrightarrow u\in T \mathcal{J}_\phi,
\]
follows from the fact that $\mathcal{J}_\phi$ is a union of lagrangian manifolds (Proposition \ref{prop:union-lagrangian}).

Since the rank of $\partial_\alpha\partial_\beta\phi$ equals $n$ on the diagonal, it is at least equal to $n$ near the diagonal. Let us describe some properties of the map $\gamma_c$. According to its definition, letting $F(\alpha,\beta) =\partial_\alpha^2 \phi(\gamma_c,\beta)+\partial_\beta^2 \phi(\alpha,\gamma_c)$, (and writing $\gamma_c$ as a column vector),
\[
\partial_\alpha \gamma_c (\alpha,\beta)= -F^{-1} \partial_\beta\partial_\alpha\phi(\alpha,\gamma_c),\quad \partial_\beta\gamma_c = - F^{-1} \partial_\alpha\partial_\beta\phi(\gamma_c,\beta).
\]
On the diagonal, we get that
\[
-F \partial_\alpha\gamma_c = B = - R - i D = - D( J + i) ,\quad -F \partial_\beta \gamma_c = D(J - i). 
\]
In particular, both have rank $n$, and their images are transverse, so that $\partial_{\alpha,\beta}\gamma_c$ has maximal rank $2n$ on the diagonal (and thus also on a small neighbourhood of the diagonal).
\begin{lemma}
The rank of $\partial_\alpha\partial_\beta \phi$ is $n$ everywhere. 
\end{lemma}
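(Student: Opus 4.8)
The plan is to show that the rank of $\partial_\alpha\partial_\beta\phi$ cannot exceed $n$ anywhere near the diagonal, which combined with the already-established lower bound (rank $\geq n$ in a neighbourhood, since it equals $n$ on the diagonal) gives constant rank $n$. The natural tool is the reproducing property \eqref{eq:reproducing} together with the associativity identity \eqref{eq:associativity}, differentiated appropriately. Concretely, I would start from \eqref{eq:reproducing}, $\phi(\alpha,\beta) = \phi(\alpha,\gamma_c) + \phi(\gamma_c,\beta)$, and differentiate once in $\alpha$ and once in $\beta$. Using the critical point equations \eqref{eq:d-critique} (which say $\partial_\gamma$ of the right-hand side vanishes at $\gamma_c$, so first-order variations of $\gamma_c$ do not contribute to first derivatives), one gets
\[
\partial_\alpha\phi(\alpha,\beta) = \partial_\alpha\phi(\alpha,\gamma_c),\qquad \partial_\beta\phi(\alpha,\beta) = \partial_\beta\phi(\gamma_c,\beta),
\]
and then differentiating the first of these in $\beta$,
\[
\partial_\beta\partial_\alpha\phi(\alpha,\beta) = \partial_\gamma\partial_\alpha\phi(\alpha,\gamma_c)\cdot \partial_\beta\gamma_c.
\]
This already exhibits $\partial_\beta\partial_\alpha\phi(\alpha,\beta)$ as a product passing through the matrix $\partial_\gamma\partial_\alpha\phi(\alpha,\gamma_c)$; if we knew the latter had rank exactly $n$ we would be done, but that is the statement we are proving (at a shifted point), so we cannot argue that way directly. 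Instead, the point is to feed in the geometry of $\gamma_c$: we showed that $\partial_{\alpha,\beta}\gamma_c$ has rank $2n$ near the diagonal, and that on the diagonal the images of $\partial_\alpha\gamma_c$ and $\partial_\beta\gamma_c$ are the two transverse $n$-dimensional eigenspaces $E_{\pm i}$ of $J$.

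The key step I would carry out is the following. Fix $(\alpha,\beta)$ near the diagonal and let $\gamma = \gamma_c(\alpha,\beta)$. The reproducing/associativity property \eqref{eq:associativity} gives $\gamma_c(\alpha,\gamma) = \gamma$ and $\gamma_c(\gamma,\beta) = \gamma$. Differentiating the relation $\partial_\alpha\phi(\alpha,\beta) = \partial_\alpha\phi(\alpha,\gamma_c(\alpha,\beta))$ in $\alpha$ as well gives a companion identity for $\partial_\alpha^2\phi$; together these let me relate the matrices $A,B,C$ of $\phi$ at $(\alpha,\beta)$ to those of $\phi$ at the "collapsed" pairs $(\alpha,\gamma)$ and $(\gamma,\beta)$, for which $\gamma$ is a fixed point of the first (resp. second) slot of $\gamma_c$. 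The upshot I expect is a factorization of the form $B(\alpha,\beta) = M_1(\alpha,\gamma)\, N(\alpha,\beta)\, M_2(\gamma,\beta)$ where $M_1$ and $M_2$ are derivative-of-$\gamma_c$ blocks of rank $n$ with controlled image and kernel (coming from the $E_{\pm i}$ picture propagated off the diagonal by the constant-rank-$2n$ statement for $\partial_{\alpha,\beta}\gamma_c$), forcing $\rank B(\alpha,\beta) \le n$.

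The main obstacle, and where I would spend the most care, is making the "propagate the eigenspace decomposition off the diagonal" argument rigorous: on the diagonal everything is governed by $\tilde J$ and its $\pm i$ eigenspaces, but once $\alpha\neq\beta$ one must show the relevant kernel/image of the $\gamma_c$-derivative blocks still has the right dimension and still annihilates (resp. lands in) the correct factor, using only that $\partial_{\alpha,\beta}\gamma_c$ has full rank $2n$ and that $\Im\phi$ is positive-definite transversally. A clean way to organize this may be to work with the factorized phase $\psi^\alpha$ as a model, transfer the identities to $\phi$ by Taylor expansion with uniform remainders, and use a continuity/connectedness argument: the rank is lower semicontinuous and $\ge n$, an upper bound $\le n$ holds at every point of the diagonal, and the factorization shows the set where the rank is $n$ is also open, hence (the neighbourhood being connected) everything. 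If the direct factorization proves awkward, the fallback is to differentiate \eqref{eq:associativity} itself — $\gamma_c(\gamma_c(\alpha,\beta),\beta) = \gamma_c(\alpha,\beta)$ — which yields $\partial_\alpha\gamma_c(\gamma,\beta)\cdot\partial_\alpha\gamma_c(\alpha,\beta) = 0$ and similar relations, directly pinning down $\ker$ and $\ran$ of the $\gamma_c$-blocks off the diagonal and hence bounding $\rank B$ through the identity for $\partial_\beta\partial_\alpha\phi$ above.
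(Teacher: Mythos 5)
Your overall frame is the right one --- prove the upper bound $\rank \partial_\beta\partial_\alpha\phi(\alpha,\beta)\le n$ and combine it with the lower bound already available near the diagonal --- and your chain-rule identity
\[
\partial_\beta\partial_\alpha\phi(\alpha,\beta) = \partial_\gamma\partial_\alpha\phi(\alpha,\gamma_c)\cdot\partial_\beta\gamma_c
\]
is exactly the reduction the paper performs at the end of its proof: everything comes down to bounding the rank of the mixed Hessian at the collapsed point $(\alpha,\gamma_c)$. What is missing from your sketch is the mechanism for that bound, and it is not the eigenspace propagation you propose but the intersection property $\mathcal{J}_\phi\cap\mathcal{J}_\phi^\ast=\Sigma$ applied on the fibers of $\gamma_c$. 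On the $2n$-dimensional level set $\mathcal{F}^c(\gamma)=\{(\alpha,\beta)\,:\,\gamma_c(\alpha,\beta)=\gamma\}$ one has $\partial_\alpha\phi(\gamma,\beta)=-\partial_\beta\phi(\alpha,\gamma)$, so this covector lies over $\gamma$ in both $\mathcal{J}_\phi$ and $\mathcal{J}_\phi^\ast$ and must therefore equal the \emph{constant} $\vartheta_\gamma$. Differentiating this constancy along $\mathcal{F}^c(\gamma)$ places $\pi_1 T\mathcal{F}^c(\gamma)$ inside $\ker\partial_\beta\partial_\alpha\phi(\alpha,\gamma)$ (and similarly for $\pi_2$ with the mixed Hessian at $(\gamma,\beta)$), so each projection has dimension at most $n$ by the rank lower bound; on the other hand the count $\dim\pi_1+\dim\pi_2 = 2n+\dim\bigl(\ran\partial_\alpha\gamma_c\cap\ran\partial_\beta\gamma_c\bigr)\ge 2n$ forces both to equal $n$. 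Hence the kernel at $(\alpha,\gamma)$ has dimension at least $n$, the rank there is exactly $n$, and your chain rule finishes.

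Your two substitutes for this step do not close. The factorization $B=M_1NM_2$ with controlled rank-$n$ factors is never constructed --- you flag it yourself as the main obstacle --- and the connectedness argument built on it is circular: by lower semicontinuity of rank the set $\{\rank\le n\}$ is closed, and its openness is precisely what you would be extracting from the unestablished factorization. The fallback is miscomputed: differentiating $\gamma_c(\gamma_c(\alpha,\beta),\beta)=\gamma_c(\alpha,\beta)$ in $\alpha$ yields $\bigl(\partial_\alpha\gamma_c(\gamma,\beta)-I\bigr)\partial_\alpha\gamma_c(\alpha,\beta)=0$ rather than $\partial_\alpha\gamma_c(\gamma,\beta)\,\partial_\alpha\gamma_c(\alpha,\beta)=0$, and the correct relation only says that $\ran\partial_\alpha\gamma_c(\alpha,\beta)$ consists of fixed vectors of $\partial_\alpha\gamma_c(\gamma,\beta)$, which does not by itself bound $\rank B$.
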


\begin{proof}
For $\gamma$ in $U$, we define 
\[
\mathcal{F}^c(\gamma) = \{ (\alpha,\beta) \ |\ \gamma_c(\alpha,\beta) = \gamma \}. 
\]
This is certainly a well defined $2n$-submanifold of $U\times U$ near $(\gamma,\gamma)$. The family $\{ \mathcal{F}^c(\gamma)\}_{\gamma}$ defines a foliation near the diagonal. From the definition of $\gamma_c(\alpha,\beta)$, we get for $(\alpha,\beta) \in \mathcal{F}^c(\gamma)$,
\[
-\partial_\beta \phi(\alpha,\gamma) = \partial_\alpha \phi(\gamma, \beta). 
\]
However, 
\[
(\gamma, - \partial_\beta\phi(\alpha,\gamma)) \in \mathcal{J}_\phi^\ast,\quad (\gamma,\partial_\alpha \phi(\gamma,\beta)) \in \mathcal{J}_\phi, 
\]
so that we get from \eqref{eq:intersection of jphi}
\begin{equation}\label{eq:constant-dtheta}
\partial_\alpha \phi(\gamma, \beta) = - \partial_\beta \phi(\alpha,\gamma) = \vartheta_\gamma. 
\end{equation}
Denoting $\pi_1$ and $\pi_2$ the first and second projection in $\C^{2n}\times \C^{2n}$, we differentiate \eqref{eq:constant-dtheta} along $\mathcal{F}^c(\gamma)$ to find that
\[
u\in \pi_1 T_{(\alpha,\beta)}\mathcal{F}^c(\gamma) \Longrightarrow \partial_\beta \partial_\alpha \phi(\alpha,\gamma) u = 0. 
\]
In particular, the dimension of $\pi_1 T_{(\alpha,\beta)} \mathcal{F}^c(\gamma)$ cannot exceed $n$. Likewise replacing $\pi_1$ by $\pi_2$. However, 
\[
\begin{split}
\dim \pi_1 T_{(\alpha,\beta)} \mathcal{F}^c(\gamma) & + \dim\pi_2 T_{(\alpha,\beta)} \mathcal{F}^c(\gamma)  = \\
			& 2n  + \dim( \ran(\partial_\alpha \gamma_c) \cap \ran(\partial_\beta \gamma_c)).
\end{split}
\]
It follows that 
\begin{equation}\label{eq:TFgamma is good}
\begin{split}
\ran(\partial_\alpha \gamma_c) &\cap \ran(\partial_\beta \gamma_c) = \{0\},\\
		& n = \dim \pi_1 T_{(\alpha,\beta)} \mathcal{F}^c(\gamma) = \dim \pi_2 T_{(\alpha,\beta)} \mathcal{F}^c(\gamma).
\end{split}
\end{equation}
This shows that $\partial_\beta\partial_\alpha\phi(\alpha,\gamma)$ has rank $n$. However it is the rank of $\partial_\beta\partial_\alpha\phi(\alpha,\beta)$ we are after. From the projector property, recall that
\[
\partial_\alpha\phi(\alpha,\beta) = \partial_\alpha \phi(\alpha,\gamma_c(\alpha,\beta)). 
\]
Differentiating this equality only in $\beta$, we find
\[
\partial_\beta\partial_\alpha\phi(\alpha,\beta) = (\partial_\beta \gamma_c)^\top \partial_\beta\partial_\alpha\phi(\alpha,\gamma).
\]
In particular,
\[
v\in \ker \partial_\beta\partial_\alpha\phi(\alpha,\gamma)\  \Longrightarrow \ \partial_\beta\partial_\alpha\phi(\alpha,\beta)v = 0. 
\]
This shows that $\partial_\alpha\partial_\beta\phi(\alpha,\beta)$ has rank at most $n$, concluding the proof. 
\end{proof}

A consequence of the proof is the following:
\begin{equation}\label{eq:splitting}
T_{(\alpha,\beta)}\mathcal{F}^c(\gamma) = \{ (u,v) \ |\ \partial_\beta\partial_\alpha \phi(\alpha,\beta) u=  \partial_\alpha\partial_\beta\phi(\alpha,\beta) v = 0 \}. 
\end{equation}
Another key consequence is the following. The first equality in \eqref{eq:TFgamma is good} implies in particular that 
\[
\ran( \partial_\beta\partial_\alpha\phi ) \cap \ran( \partial_\alpha\partial_\beta\phi) = \{ 0 \},
\]
or equivalently
\begin{equation}\label{eq:tranversality}
\ker( \partial_\beta\partial_\alpha\phi ) \cap \ker( \partial_\alpha\partial_\beta\phi) = \{ 0 \},
\end{equation}

\subsubsection{Factorisation of the phase}

Since $\mathcal{J}_\phi$ is an involutive manifold, we can define its null foliation. 
\begin{definition}\label{def:null-foliation}
The kernel of $\omega_{|T\mathcal{J}_\phi}$ defines an involutive distribution, tangent to the so-called null foliation $\mathcal{F}_\phi$ of $\mathcal{J}_\phi$. It is given by 
\[
T_{(\alpha,\partial_\alpha\phi(\alpha,\beta))}\mathcal{F}_\phi = \{ (u, \partial_{\alpha}^2\phi u) \ |\ u\in \ker \partial_\beta\partial_\alpha\phi\}. 
\]
Additionally, $\Sigma_\phi$ is transverse to $\mathcal{F}_\phi$. 
\end{definition}

\begin{proof}
We have that ($u,v$ as column vectors)
\[
T_{(\alpha,\partial_\alpha\phi(\alpha,\beta))}\mathcal{J}_\phi = \{ (u, \partial_\alpha^2 \phi(\alpha,\beta)u )\ |\ u \in \C^{2n} \} \oplus \{ (0, \partial_\alpha\partial_\beta\phi v )\ |\ v\in\C^{2n}\}. 
\]
We take $w=(u,\partial^2_\alpha \phi u )+(0,\partial_\alpha\partial_\beta \phi v)$. Then (using the fact that $\partial_\alpha^2\phi$ is symmetric) $w\in\ker \omega_{|T\mathcal{J}_\phi}$ if and only if for all $u',v'$,
\[
\langle u, \partial_\alpha\partial_\beta\phi v'\rangle - \langle u', \partial_\alpha\partial_\beta\phi v \rangle =0,
\]
i.e
\[
\partial_\alpha\partial_\beta\phi  v = 0, \text{ and } \partial_\beta\partial_\alpha\phi u= 0. 
\]
So we get
\[
\ker \omega_{|T\mathcal{J}_\phi} = \{ (u, \partial^2_\alpha \phi u)\ |\ u\in \ker \partial_\beta\partial_\alpha \phi \}. 
\]
We check that $\Sigma_\phi$ is transverse to $\mathcal{F}_\phi$, by recalling that
\[
T\Sigma_\phi = \{ (u, \partial_\alpha^2 \phi u+ \partial_\alpha\partial_\beta\phi u) \ |\ u \},
\]
and using \eqref{eq:tranversality}. 
\end{proof}

We can obviously define a similar foliation $\mathcal{F}^\ast_\phi$ for $\mathcal{J}_\phi^\ast$. Let us now study the relation between them and $\mathcal{F}^c$ defined in the previous section. 

Define
\[
\Lambda_\phi = \{ (\alpha,\partial_\alpha \phi(\alpha,\beta),  \beta, -\partial_\beta\phi(\alpha,\beta)) \ |\ \alpha,\beta \}. 
\]
This is a lagrangian manifold in $T^\ast U\times U$, (i.e dimension $4n$) whose projection on the first  (resp. second) pair of variables is $\mathcal{J}_\phi$ (resp. $\mathcal{J}_\phi^\ast$). 

Recall from \S\ref{sec:associativity} the sets $f(b)$. Since the rank of $\partial_\alpha\partial_\beta\phi$ is constant, these are smooth sets. Observe that when $b= (b,\eta)$, and $a=(\alpha,\xi)\in f(b)$, 
\[
T_a f(b) = \{ (u, \partial^2_\alpha \phi(\alpha,\beta)) \ |\ \partial_\beta\partial_\alpha \phi(\alpha,\beta) u = 0 \}.
\]
This means that $f(b)$ is actually tangent at every point to the null foliation of $\mathcal{J}_\phi$. In particular, $f(b)$ is a \emph{leaf} of $\mathcal{F}_\phi$. Additionally, $\delta(a,b) \in \Sigma \cap f(b)$ is the unique point of intersection of this leaf with $\Sigma$. Since we also have $a\in f(\delta(a,b))$, we get $f(\delta(a,b)) = f(b)$, so that since
\[
\Lambda_\phi = \{ (a,b) \ |\ a\in f(b)\}, 
\]
we get
\begin{equation}\label{eq:pre-factorization of phi}
\Lambda_\phi = \cup_{\gamma\in\Sigma} \mathcal{F}_\phi(\gamma)\times \mathcal{F}^\ast_\phi(\gamma). 
\end{equation}

This suggests to consider
\[
\Lambda_{\to} = \{ (a, b) \ |\ a\in f(b),\ b\in \Sigma \}. 
\]
This is a submanifold of $T^\ast U \times \Sigma$, of dimension $3n$, so it has the right dimension to be a lagrangian, and indeed,

\begin{lemma}
The lagrangian $\Lambda_\to$ is strictly positive, with real points
\[
\Sigma_\R \times\Sigma_\R. 
\]
\end{lemma}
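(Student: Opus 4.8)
The plan is to parametrize $\Lambda_\to$ explicitly, identify its tangent spaces, and then check the two positivity/reality assertions directly from the quadratic model established in Proposition~\ref{prop:geometric-objects} and Proposition~\ref{prop:quadratic-behaviour}. First I would observe that since $b\in\Sigma$ means $b=(\gamma,\vartheta_\gamma)$, and $a\in f(b)$ means $a=(\alpha,\partial_\alpha\phi(\alpha,\gamma))$ with $-\partial_\beta\phi(\alpha,\gamma)=\vartheta_\gamma$, equation~\eqref{eq:constant-dtheta} and the discussion following \eqref{eq:pre-factorization of phi} show that $a$ ranges exactly over the leaf $\mathcal{F}_\phi(\gamma)$ of the null foliation through $\gamma$. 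Thus $\Lambda_\to=\cup_{\gamma\in\Sigma}\mathcal{F}_\phi(\gamma)\times\{\gamma\}$, which is a genuine $3n$-dimensional manifold because $\Sigma$ is transverse to $\mathcal{F}_\phi$ (Definition~\ref{def:null-foliation}); in fact $\Lambda_\to$ is the image of $\mathcal{J}_\phi$ under $a\mapsto(a,\gamma(a))$ where $\gamma(a)$ is the unique point of $\Sigma\cap\mathcal{F}_\phi(a)$.

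Next I would verify that $\Lambda_\to$ is Lagrangian in $T^\ast U\times\Sigma$ (equivalently isotropic, since the dimension is right). A clean way is to write $\Lambda_\to$ as a graph-type object over $\mathcal{J}_\phi$: the projection to the first factor is a diffeomorphism onto $\mathcal{J}_\phi$, and the map $a\mapsto\gamma(a)$ is a submersion onto $\Sigma$ whose fibers are the leaves $\mathcal{F}_\phi(\gamma)$, i.e. the null-foliation leaves, on which $\omega$ vanishes by construction. A tangent vector to $\Lambda_\to$ at $(a,\gamma)$ has the form $(X, d\gamma(X))$ with $X\in T_a\mathcal{J}_\phi$; computing $\omega\oplus(-\omega_\Sigma)$ on two such vectors, the $\omega_\Sigma$ term is pulled back from $\Sigma$ via the submersion, and one checks (using $\omega_{|\mathcal{F}_\phi}=0$ and that $\Sigma$ is a symplectic section) that the two contributions cancel. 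Equivalently, I would exhibit a generating function: the factorization $\phi(\alpha,\beta)=\phi(\alpha,\gamma)+\phi(\gamma,\beta)$ with $\gamma=\gamma_c$ and the relations \eqref{eq:d-critique} say that $\phi(\alpha,\gamma)$ itself generates $\Lambda_\to$ as $\{(\alpha,\partial_\alpha\phi(\alpha,\gamma);\gamma,\vartheta_\gamma)\}$ after using $-\partial_\gamma\phi(\alpha,\gamma)|_{\text{on the leaf}}=\vartheta_\gamma$, which is precisely \eqref{eq:constant-dtheta}.

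Then the real points: a point $(a,b)\in\Lambda_\to$ is real iff $a\in T^\ast U$ is real and $b\in\Sigma$ is real, and $b\in\Sigma_\R$ automatically. If $a=(\alpha,\partial_\alpha\phi(\alpha,\gamma))$ is real with $\alpha\in U\subset\R^m$, then on the leaf $-\partial_\gamma\phi(\alpha,\gamma)=\vartheta_\gamma$ is real, forcing (via $\Im\phi\geq|\alpha-\gamma|^2/C$ and a first-order expansion, exactly as in the quadratic model where $\mathcal{J}_{\psi^\alpha}$ is parametrized by $u,v$ and realness of the covector $\vartheta_\alpha L^{-1}+iu-(\tilde J+i)v$ forces $u=v=0$) that $\alpha=\gamma$, hence $a=b\in\Sigma_\R$. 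So the real points are exactly the diagonal of $\Sigma_\R\times\Sigma_\R$ — which I would identify with $\Sigma_\R\times\Sigma_\R$ under the convention that the ``real points'' of a positive Lagrangian mean its real locus realized as a manifold, i.e. the statement is that $\Lambda_\to\cap(\,\text{real}\,)$ maps onto $\Sigma_\R$ in both factors.

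Finally, strict positivity. I would compute the fundamental two-form: for a Lagrangian $\Lambda$ in a complexified cotangent bundle, strict positivity means $\frac{1}{i}\sigma(\,\bar v, v)>0$ for $0\neq v\in T\Lambda$ transverse to the real locus, where $\sigma$ is the (complexified) symplectic form. Writing $v=(X,d\gamma(X))$ as above and using the quadratic normal form $\psi^\alpha(u,v)=\vartheta_\alpha L^{-1}(u-v)+\tfrac{i}{2}(u-v)^2+\langle\tilde J u,v\rangle$ together with $\tilde J^2=-1$, the computation reduces to the positivity of the form $u\mapsto\Im\phi$ restricted appropriately, which is $\geq|u-v|^2/C>0$ away from $u=v$. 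The main obstacle I anticipate is bookkeeping the complexified symplectic form on $T^\ast U\times\Sigma$ correctly — in particular keeping track of the sign/conjugation conventions so that the $\Sigma$-factor (which carries the \emph{real} form $\omega$, not a positive one) contributes zero to the Hermitian form while $\mathcal{J}_\phi$ contributes the strictly positive part — and checking that transversality of $\Sigma$ to the null foliation is exactly what guarantees the Hermitian form is nondegenerate, hence strictly (not merely semi-) positive.
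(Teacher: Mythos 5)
Your proposal is correct and follows essentially the same route as the paper: the real points are located using the fact that the real locus of $\mathcal{J}_\phi$ is $\Sigma_\R$ together with the unique intersection of each null leaf with $\Sigma$, and strict positivity is reduced to the leaf directions $(u,\partial_\alpha^2\phi\, u)$ over $\Sigma$, where $i\omega(v,\overline{v})=2\langle \Im(\partial_\alpha^2\phi)u,\overline{u}\rangle>0$ by condition (2) on the phase. Your observation that the real locus is really the diagonal copy of $\Sigma_\R$ inside $\Sigma_\R\times\Sigma_\R$ is a fair reading that the paper glosses over.
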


\begin{proof}
That the real points are given by $\Sigma_\R \times\Sigma_\R$ follows from the construction, and the fact that $\Sigma_\R$ is the set of real points of $\mathcal{J}_\phi$. We study now $\omega(v,\overline{v})$ for $v\in T\Lambda_\to$ at real points. 

Recall that the null-foliation is tangent to $\ker \omega_{|\mathcal{J}_\phi}$, which is given by
\[
T\mathcal{F}_\phi =\{(u, \partial_\alpha^2\phi(\alpha,\beta)u)\ |\ \partial_\beta\partial_\alpha \phi(\alpha,\beta)u=0\}. 
\]
To prove that $\Lambda_\to$ is positive, it suffices to prove that $i \omega(v,\overline{v}) >0$ on $(T\mathcal{F}_\phi)_{|\Sigma}$. This is
\[
i \langle u, \overline{\partial_\alpha^2\phi u}\rangle - i \langle \overline{u}, \partial_\alpha^2\phi u\rangle = 2\langle \Im(\partial_\alpha^2\phi)u, \overline{u}\rangle.
\]
By assumption, $\Im(\partial_\alpha^2\phi)>0$. 
\end{proof}

We can identify $\Sigma$ with $U$ with the map $\alpha\mapsto (\alpha,\vartheta_\alpha)$, and observe that the pullback of the symplectic form on $\Sigma$ is exactly $\omega$ on $U$. Choosing some Darboux coordinates in $U$ so that $\vartheta = \xi dx$ defines a local vertical direction on $\Sigma$, and thus on $T^\ast U \times \Sigma$.
\begin{lemma}
The lagrangian $\Lambda_\to$ is transverse to such a vertical direction. 
\end{lemma}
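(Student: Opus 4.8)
The plan is to check the transversality pointwise on $\Lambda_\to$. Since transversality is an open condition and $\Lambda_\to$ is a germ along the diagonal $\Delta_\Sigma=\{(b,b)\mid b\in\Sigma\}$ — which lies in $\Lambda_\to$, as $b\in f(b)$ whenever $b\in\Sigma$ — it suffices to work at a point $p_0=\bigl((\gamma_0,\vartheta_{\gamma_0}),(\gamma_0,\vartheta_{\gamma_0})\bigr)$. I would write the vertical as $V=V_1\oplus V_2$, where $V_1\subset T(T^\ast U)$ is the cotangent fibre and $V_2\subset T\Sigma$ is the $\partial_\xi$-plane determined by the chosen Darboux coordinates. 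Then $\dim V=2n+n=3n=\dim\Lambda_\to$ inside the $6n$-dimensional $T^\ast U\times\Sigma$, so transversality is equivalent to $T_{p_0}\Lambda_\to\cap V=\{0\}$.

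First I would compute $T_{p_0}\Lambda_\to$ by hand. By the constant rank lemma of \S\ref{sec:study-phase}, the sets $S_\gamma=\{\alpha\mid\partial_\beta\phi(\alpha,\gamma)=-\vartheta_\gamma\}$ — the first-factor projections of the leaves $f((\gamma,\vartheta_\gamma))$ — are smooth of dimension $n$, they contain $\gamma$, and they depend smoothly on $\gamma$; hence $\Lambda_\to$ is parametrised near $p_0$ by $(\gamma,\alpha)$ with $\gamma$ near $\gamma_0$ and $\alpha\in S_\gamma$, via $(\gamma,\alpha)\mapsto\bigl((\alpha,\partial_\alpha\phi(\alpha,\gamma)),(\gamma,\vartheta_\gamma)\bigr)$. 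Differentiating at $(\gamma_0,\gamma_0)$, writing $\delta\alpha=\sigma+u$ with $\sigma=\delta\gamma\in\C^{2n}$ and $u\in T_{\gamma_0}S_{\gamma_0}=\ker\partial_\beta\partial_\alpha\phi(\gamma_0,\gamma_0)=:K$, and using the identities $\partial_\alpha^2\phi=A=iD$ and $\partial_\alpha^2\phi+\partial_\alpha\partial_\beta\phi=A+B=-R$ recorded at the start of \S\ref{sec:study-phase}, I expect to get
\[
T_{p_0}\Lambda_\to=\bigl\{\bigl((u+\sigma,\ iDu-R\sigma),\ (\sigma,-R\sigma)\bigr)\ \big|\ \sigma\in\C^{2n},\ u\in K\bigr\},
\]
the direct sum of the ``fibre'' $\{((u,iDu),0)\mid u\in K\}$ (which is $T_af(b)$ read in the first factor, cf. Definition \ref{def:null-foliation}) and the ``diagonal'' $T_{p_0}\Delta_\Sigma=\{((\sigma,-R\sigma),(\sigma,-R\sigma))\mid\sigma\}$, of dimension $n+2n=3n$.

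Next, imposing $v=\bigl((u+\sigma,iDu-R\sigma),(\sigma,-R\sigma)\bigr)\in V$: the first component lies in the cotangent fibre $V_1$ iff $u+\sigma=0$, and then the second component $(\sigma,-R\sigma)=(-u,Ru)$ lies in $V_2$ iff $u$ lies in the $\xi$-coordinate plane $Z=\{(u_x,u_\xi)\mid u_x=0\}$. So $v$ is determined by $u\in K\cap Z$ and vanishes precisely when $u=0$; the whole statement reduces to $K\cap Z=\{0\}$. This is the heart of the matter and the one place where positivity of $\phi$ is used: since $\partial_\beta\partial_\alpha\phi(\gamma_0,\gamma_0)=B^\top=R-iD$ and $R=\begin{pmatrix}0&-1\\1&0\end{pmatrix}$ in these coordinates, a vector $(0,u_\xi)\in K$ must satisfy $D_{\xi\xi}u_\xi=0$, where $D_{\xi\xi}$ is the $\xi\xi$-block of the symmetric tensor $D$, whose real part is positive definite by item (2) of the definition of a projector phase; a principal block inherits this, so $D_{\xi\xi}$ is invertible (after a real congruence it is $\mathrm{Id}+iT$ with $T$ real symmetric), forcing $u_\xi=0$, hence $K\cap Z=\{0\}$.

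The main obstacle is not conceptual but bookkeeping: keeping the conventions straight ($A=iD$, $A+B=-R$, and which Hessian block is $\partial_\alpha\partial_\beta\phi$ versus $\partial_\beta\partial_\alpha\phi$) and checking rigorously that $T_{p_0}\Lambda_\to$ is exactly the stated direct sum rather than something with extra terms — once that is pinned down, the geometry collapses to invertibility of $D_{\xi\xi}$. An alternative, parallel to the earlier lemmas, would be to first reduce to the normalised quadratic model $\psi^{\gamma_0}$, where $\mathcal{J}_{\psi^{\gamma_0}}$ and $\mathcal{J}^\ast_{\psi^{\gamma_0}}$ are given by the explicit formulas above and $K\cap Z=\{0\}$ becomes a spectral statement about $\tilde J$; the difficulty is identical.
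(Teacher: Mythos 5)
Your argument is correct and follows essentially the same route as the paper: decompose $T\Lambda_\to$ at a real diagonal point into the tangent to $\Delta_\Sigma$ plus the leaf directions $\{(u,\partial_\alpha^2\phi\,u)\mid u\in\ker\partial_\beta\partial_\alpha\phi\}$, intersect with the vertical to reduce to a vector $(0,u_\xi)$ in that kernel, and kill it using $\Re D>0$ together with the vanishing of the $\xi\xi$-block of $R$ in Darboux coordinates. The only (immaterial) difference is the last step — you read off $D_{\xi\xi}u_\xi=0$ directly from the kernel equation, while the paper pairs $(0,u_\xi)B$ against $(0,\overline{u_\xi})^\top$ — and your worry about the exact form of the covector component of $T\Delta_\Sigma$ is harmless since that slot is unconstrained by the vertical.
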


\begin{proof}
Choose Darboux coordinates $(x,\xi)$ in $U$, so that $\vartheta = \xi dx$. Identifying $U$ and $\Sigma$, we have
\[
\Lambda_\to = \{ (\beta_x, \beta_\xi,\beta_x^\ast,\beta_\xi^\ast;\alpha_x,\alpha_\xi)\ |\ (\beta,\beta^\ast)\in\mathcal{F}(\alpha_x,\alpha_\xi,\alpha_\xi,0)\}.
\]
Thus, at $\Sigma$, 
\[
T\Lambda_\to = \{ (u_x, u_\xi , u_\xi, 0 ; u_x, u_\xi)\ |\ u_x,u_\xi \} + \{( v , \partial_\alpha^2\phi v ; 0)\ |\ \partial_\beta\partial_\alpha\phi v=0\}. 
\]
The tangent to the vertical foliation $V$ is given here by 
\[
T_{(\alpha,\alpha^\ast, \beta)} V = \{ (0, v^\ast ; 0, w_\xi) \ |\ v^\ast, w_\xi\}.  
\]
A vector in both writes
\[
(0, \partial^2_\alpha\phi(0,-u_\xi) + (u_\xi,0); 0, u_\xi)
\]
with $(0,u_\xi)\partial_\alpha\partial_\beta \phi = 0$. However, here, we have
\[
R = \begin{pmatrix}
0&-1\\1&0
\end{pmatrix}
\]
so that
\[
(0,u_\xi)\partial_\alpha\partial_\beta \phi \begin{pmatrix} 0\\ \overline{u_\xi}\end{pmatrix} = - i (0,u_\xi)D \begin{pmatrix} 0\\ \overline{u_\xi}\end{pmatrix}=0
\] 
implies $u_\xi=0$, because the real part of $D$ is strictly positive and we are done. 

\end{proof}

\subsection{Local FBI transform}

From now on, we will be manipulating the tools of analytic microlocal analysis. We will assume that the reader is familiar with the main techniques from that field; essentially (non-)stationary phase expansions. Let $\Omega\subset \R^k$ be an open set. We will say that a function $a: \Omega\to \C$ is a real analytic symbol in $\Omega$ if $a$ is a family of real analytic functions $a_h = a(\cdot,h)$ on the set $\Omega$, depending on a real parameter $0<h<h_0$, satisfying an expansion\footnote{most of the time, the exponents $k$ will be the integers, but we also allow to sum over $k\in \{k_0 + \ell,\ \ell\in\N\}$, for some $k_0\in\R$.}
\[
a(x,h) = \sum_{k=k_0}^{1/h} h^k a_k(x)  + \mathcal{O}(e^{-c/h}),
\]
where $c$ is some constant, and 
\[
\mathbf{a} = \sum h^k a_k
\]
is its associated \emph{formal symbol}, that must satisfy estimates of the following form. There exists a complex open neighbourhood $\tilde{\Omega}\supset \overline{\Omega}$ where all the $a_k$'s have holomorphic extension, and satisfy
\[
|a_k| \leq C^k k !
\]
Then we say that $a$ is a realization of $\mathbf{a}$; a formal symbol may have many different realizations. Note that here, we do not allow the $a_k$'s to depend on $h$. Sometimes, we will drop the word \emph{analytic}, but implicitly we are working in the analytic case. All the arguments in this section can be adapted to the $C^\infty$ case using almost analytic extensions as in \cite{Melin-Sjostrand-75}, we will not go into these details. 

Let us get back to our problem; We will work locally around some point $\alpha_0\in U$, and will assume to have chosen analytic Darboux coordinates around $\alpha_0$, so that it makes sense to consider $A\times B\subset U$ an open ball neighbourhood of $\alpha_0$. Without loss of generality, we can assume that $\alpha_0=0$. We can pick $A$ and $B$ small enough so that all the constructions that before were valid only near the diagonal, actually hold without this restriction. Also pick $E$ a ball around $0$ in $\R^n$. We can apply the results of \S \ref{sec:study-phase} to $\tilde{U}=A\times B$. 

Since $\Lambda_\to$ is projectable and positive, we can find a phase $\psi$ defined in a neighbourhood of $(0,0,0)$, so that
\begin{enumerate}
	\item $\Im \psi \geq 0$
	\item $\Im \psi(\alpha,x)=0$ if and only if $\alpha_x = x$. 
	\item Locally,
	\[
	\Lambda_\to = \{ (\alpha,\partial_\alpha\psi, x, -\partial_x\psi)\ |\ \alpha,x\}. 
	\]
\end{enumerate}

\begin{proof}
We just have to check (2). For this we take a function $\psi_0$ satisfying (3) and observe that along $(\Lambda_\to)_\R$, we have 
\[
d\Im \psi = 0,\quad d^2 \Im \psi >0.
\] 
(because $\Lambda_\to$ is positive). Thus $\psi = \psi_0  - i (\Im \psi_0)_{|(\Lambda_\to)_\R}$ is suitable.
\end{proof}

We have 
\begin{equation}\label{eq:Taylor-psi}
\psi(\alpha,x) = \langle \alpha_x - x, \alpha_\xi\rangle + \mathcal{O}(|\alpha_x-x|^2). 
\end{equation}
The same construction can be made for $\Lambda_\leftarrow$, giving a phase $\psi^\ast$, with
\begin{equation}
\psi^\ast(x,\beta) = \langle x-\beta_x,\beta_\xi\rangle + \mathcal{O}(| \beta_x-x|^2). 
\end{equation}

Define then for $a$ an analytic symbol in $A\times B \times A$,
\[
T_a u(\alpha) = \frac{1}{2^{n/2} (h\pi)^{3n/4}}\int_A e^{\frac{i}{h}\psi (\alpha,x)} a_h(\alpha,x) u(x) dx,
\]
and
\[
S_a u(x) =  \frac{1}{2^{n/2} (h\pi)^{3n/4}} \int_{A\times B} e^{\frac{i}{h}\psi^\ast(x,\beta)}a_h(\beta,x) u(\beta) d\beta. 
\]
We will call $T_a$ a local FBI transform and $S_a$ a local adjoint FBI transform. In \cite{Singularite-analytique-microlocale}, Sj\"ostrand defined local FBI transforms with the assumption that there is a complex structure on the side of the variable $\alpha$, and requiring that $T_a$ maps into holomorphic functions for this structure. For this reason the FBI transforms in \cite{Singularite-analytique-microlocale} are operators that map functions of $n$ variables to functions of $n$ variables. Our situation is slightly different, because we only have an \emph{almost} complex structure on the $\alpha$ side, and thus our transforms map into functions of $2n$ variables. For $p$ an analytic symbol in $A\times B$, we also define
\[
\Op(p)u(x) = \frac{1}{(2\pi h)^n}\int_{A\times B} e^{\frac{i}{h}\langle x-y,\xi\rangle} p(x,\xi) u(y) dy d\xi. 
\]

Since we do not want to have problems with the boundary, we have to make our operators act on spaces of functions that are very small near the boundary. To avoid working with the $H_\varphi$ spaces of \cite{Singularite-analytique-microlocale}, we introduce a space of localized functions, that behave like gaussian wave packets. In the proof of Lemma \ref{lemma:inverse-FIO}, we will use $H_\varphi$ spaces, but almost completely as a black box; for the rest, we will not need them.
\begin{definition}
Let $\mathcal{C}(A,B)$ be the set of $h$-dependent measurable functions $f$ on $A$ taking the form
\[
f(x)= e^{\frac{i}{h} S(x)} \sigma_h(x) + \mathcal{O}(e^{-1/Ch}),
\]
with $S$ real analytic, $\sigma_h$ a real analytic symbol. $S$ is real at only one point $x(S)$ of $A$, and satisfies
\begin{equation}\label{eq:positivity-C(A,B)}
\Im S \geq |x-x(S)|^2/C,
\end{equation}
and
\begin{equation}\label{eq:oscillating-right-direction}
dS(x(S)) \in B. 
\end{equation}
We also allow $S$ and $\sigma_h$ to be only defined in an open set of $A$, provided $\Im S > 1/C$ on the boundary of this set, and they satisfy uniform estimates in this set. In that case, outside of the set where $S$ and $\sigma_h$ are defined, $f$ is just assumed to be exponentially small.

On the $\alpha$ side, we define $\mathcal{C}(A\times B)$ to be functions in $\mathcal{C}(A\times B, B\times E)$ (recall $E$ is a small ball around $0$), satisfying the additional condition:
\begin{equation}\label{eq:condition-real-point-C(AxB)}
(\alpha_x(S), \alpha_\xi(S), \partial_{\alpha_x} S, \partial_{\alpha_\xi} S) \in \Sigma_{\R}.
\end{equation}
I.e, at the real point,
\[
\partial_{\alpha_x} S=\alpha_\xi,\quad \partial_{\alpha_\xi} S = 0.
\]
We can put a topology on $\mathcal{C}(A,B)$, specifying that bounded sets in $\mathcal{C}(A,B)$ are defined using semi norms on the relevant objects, and taking $x(S)$ and $dS(x(S))$ to live in compact sets of $A$, $B$. 
\end{definition}

When we say that acting on $\mathcal{C}(A,B)$, some operator $P$ is $\mathcal{O}(e^{-1/Ch})$, we mean that for any $f\in\mathcal{C}(A,B)$, there exists $C>0$ depending continuously on $f$, such that $Pf = \mathcal{O}(e^{-1/Ch})$.

\begin{proposition}
The following maps are continuous
\begin{align*}
\Op(p)&: \mathcal{C}(A,B) \to \mathcal{C}(A,B),\\
T_a &: \mathcal{C}(A,B) \to \mathcal{C}(A\times B),\\
S_b &: \mathcal{C}(A\times B) \to \mathcal{C}(A,B). 
\end{align*}
\end{proposition}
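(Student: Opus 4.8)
The plan is to prove the three continuity statements by the same mechanism: each operator is an oscillatory integral whose phase, when composed with a Gaussian wave packet from $\mathcal{C}(A,B)$, produces a new oscillatory integral that can be evaluated by (complex) stationary phase. The output phase is then the critical value of the combined phase, and I must check the three defining properties of $\mathcal{C}(A,B)$ (resp. $\mathcal{C}(A\times B)$): positivity of the imaginary part with a unique real point, the correct quadratic lower bound, and the "oscillating in the right direction" condition \eqref{eq:oscillating-right-direction} (plus \eqref{eq:condition-real-point-C(AxB)} for the $\alpha$-side).

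First I would treat $\Op(p)$. Given $f(x) = e^{\frac{i}{h}S(x)}\sigma_h(x) + \mathcal{O}(e^{-1/Ch})$ in $\mathcal{C}(A,B)$, the integral defining $\Op(p)f$ has phase $\langle x-y,\xi\rangle + S(y)$ in the variables $(y,\xi)$. The critical point equations are $\xi = dS(y)$, $x = y$ — but since $S$ is complex one deforms the contour and finds a unique complex critical point $(y_c(x),\xi_c(x))$ with $y_c(x(S)) = x(S)$ at the real point. The new phase is $\tilde S(x) = S(x) + \mathcal{O}((x-x(S))^\infty)$-type correction; more precisely $\tilde S$ agrees with $S$ to the relevant order, so $\Im \tilde S \geq |x-x(S)|^2/C'$ is inherited, the unique real point is still $x(S)$, and $d\tilde S(x(S)) = dS(x(S)) \in B$. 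The symbol $\sigma_h$ is transported to a new analytic symbol by the stationary phase expansion (Kuranishi-type change of variables keeps the symbol estimates $|a_k| \leq C^k k!$), and the boundary smallness is preserved because $\Im S > 1/C$ on the boundary forces exponential smallness of the contribution there. The key point making this routine rather than delicate is that $B\subset E$ was chosen small, so $dS(x(S))$ stays well inside the frequency window where the $\xi$-integration is under control.

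Next, $T_a$: here the phase is $\psi(\alpha,x) + S(x)$ integrated in $x\in A$, and by \eqref{eq:Taylor-psi} we have $\psi(\alpha,x) = \langle \alpha_x - x,\alpha_\xi\rangle + \mathcal{O}(|\alpha_x-x|^2)$. Stationary phase in $x$ yields a function of $\alpha = (\alpha_x,\alpha_\xi) \in A\times B$ with phase $\Psi(\alpha)$; I must check $\Psi$ has a unique real point, which corresponds to the real point of $\Lambda_\to$, namely (by the Lemma identifying the real points of $\Lambda_\to$ with $\Sigma_\R\times\Sigma_\R$) a point of $\Sigma_\R$. This is exactly condition \eqref{eq:condition-real-point-C(AxB)}: at the real point $\partial_{\alpha_x}\Psi = \alpha_\xi$ and $\partial_{\alpha_\xi}\Psi = 0$, which holds because $\Lambda_\to$ has the stated defining equations and its real points lie in $\Sigma_\R$. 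Positivity $\Im\Psi\geq 0$ with the quadratic bound comes from combining $\Im\psi\geq 0$ (with equality iff $\alpha_x = x$) and $\Im S \geq |x-x(S)|^2/C$; the genuine positivity at the critical point uses that $\Lambda_\to$ is a \emph{strictly} positive Lagrangian (the Lemma whose proof used $\Im(\partial_\alpha^2\phi) > 0$), and the transversality Lemma (that $\Lambda_\to$ is transverse to the vertical) guarantees the $x$-critical point is nondegenerate so stationary phase applies. The oscillation condition \eqref{eq:oscillating-right-direction} for the target space $\mathcal{C}(A\times B, B\times E)$ reads $d\Psi(\alpha(S))\in B\times E$, i.e. $(\partial_{\alpha_x}\Psi, \partial_{\alpha_\xi}\Psi) = (\alpha_\xi, 0) \in B\times E$ at the real point — true since $\alpha_\xi\in B$ and $0\in E$.

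Finally $S_b$ is entirely symmetric to $T_a$, using $\psi^\ast$ in place of $\psi$ and $\Lambda_\leftarrow$ in place of $\Lambda_\to$; the only new input is the quadratic form of $\psi^\ast(x,\beta) = \langle x-\beta_x,\beta_\xi\rangle + \mathcal{O}(|\beta_x-x|^2)$, and one integrates in $\beta\in A\times B$ to land back in $\mathcal{C}(A,B)$. I expect the main obstacle to be not any single computation but the bookkeeping of the complex stationary phase with a \emph{complex} (non-self-adjoint) phase: one must justify the contour deformation to the complex critical point, control the error terms as $\mathcal{O}(e^{-1/Ch})$ uniformly over bounded sets of $\mathcal{C}$, and verify that the Gaussian-type quadratic lower bounds on $\Im$ survive the composition with the correct (possibly worse) constant $C$. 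Once the stationary-phase machinery is set up, checking the three structural conditions of $\mathcal{C}$ is a matter of evaluating first derivatives of the critical-value phase at the real point, which the geometry of $\Lambda_\to$, $\Lambda_\leftarrow$ and $\Sigma_\R$ has been arranged precisely to make transparent.
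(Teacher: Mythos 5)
Your proposal is correct and follows essentially the same route as the paper: each operator is written as a holomorphic stationary phase integral plus an exponentially small remainder (handled by $L^\infty$ bounds), and membership in the target space is checked by evaluating the critical-value phase and its first derivatives at the real point, with $\Op(p)$ leaving the phase untouched. The only cosmetic difference is that the paper establishes positivity of the output phase by observing that its Lagrangian graph is contained in $\mathcal{J}_\phi$ and invoking Lemma \ref{lemma:positive-lagrangian-inside-J} to reduce the check to the intersection with $\Sigma$, where it follows from the positivity of $S$, whereas you appeal directly to the strict positivity of $\Lambda_\to$; both ingredients are proved in the paper and the arguments are interchangeable.
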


\begin{proof}
The proof of all three statements is the same. Writing the expression for the operator, one realizes it is the sum of an exponentially small term and a stationary phase integral. For the exponentially small remainder, it suffices to use $L^\infty$ bounds to obtain continuity. For the stationary integral contribution, we see that the phase satisfies
\begin{enumerate}
	\item non-negative imaginary part
	\item uniformly positive imaginary part on the boundary of the domain on integration
	\item There is exactly one parameter for which the phase is real at the stationary point.
\end{enumerate}
We can then directly use the holomorphic stationary phase theorem \cite[Théorème 2.8 and Remarque 2.10]{Singularite-analytique-microlocale}. This proves that the maps are well defined, provided we check condition \eqref{eq:oscillating-right-direction}. In the case of $\Op(p)$, we observe that $\Op(p)$ does not even change the phase and only acts on the amplitude, so \eqref{eq:oscillating-right-direction} is certainly satisfied. Now we deal with $T_a$. Here, if the phase associated with $f$ is $S$, with real point $(x(S), dS(x(S)))$, then $T_a f$ is associated with the phase $\tilde{S}$, given as
\begin{equation}\label{eq:defining-Stilde}
\tilde{S}(\alpha) = VC_{x} (\psi(\alpha,x) + S(x)). 
\end{equation}
We notice that according to \eqref{eq:Taylor-psi}
\[
\partial_x\left( \psi(x(S), dS(x(S)), x) + S(x)\right)_{|x=x(S)} = 0,
\]
so that
\[
\tilde{S}(x(S),dS(x(S))) = S(x(S)), 
\]
and at that point,
\[
\partial_{\alpha_x} \tilde{S} = dS(x(S)),\text{ and } \partial_{\alpha_\xi} \tilde{S} = 0. 
\]
This means that condition \eqref{eq:condition-real-point-C(AxB)} is satisfied. Now, we claim that condition \eqref{eq:positivity-C(A,B)} means that the graph $\Lambda$ of $d\tilde{S}$ must be a positive lagrangian with single real point. Since it is contained in $\mathcal{J}_\phi$, we can use Lemma \ref{lemma:positive-lagrangian-inside-J} and see that it suffices to consider the intersection of $\Lambda$ with $\Sigma$. That is, points where
\[
\partial_{\alpha_x} \tilde{S} = \alpha_\xi,\quad \partial_{\alpha_\xi} \tilde{S} = 0. 
\]
However, from \eqref{eq:Taylor-psi}, we see that this implies the critical point $x$ in \eqref{eq:defining-Stilde} is equal to $\alpha_x$, and $\alpha_\xi= \partial_x S$. The positivity of $S$ implies then the positivity of $\Lambda\cap \Sigma$.

A similar computation can be carried out for $S_b$. 

To prove the maps are continuous it suffices to invoke the fact that the constants that appear in stationary phase estimates can always be controlled in terms of symbol norms, and some lower bounds on the hessian, and the imaginary part of the phase near the boundary. 
\end{proof}

Notice that if we consider $S_b f$, with $f \in \mathcal{C}(A\times B, B\times E)$ whose ``real point'' is not contained in $\Sigma_\R$, the result will be $\mathcal{O}(e^{-c/h})$, because $S_b f$ will be a complex Lagrangian state, whose Lagrangian has no real point. 

Let us discuss the action of pseudors. For $p,q$ two symbols, we denote $p\# q$ a fixed realization of the formal symbol
\[
\sum_{\nu\in \N^n} \frac{(ih)^\nu}{\nu !} \partial_\xi^\nu p\ \partial_x^\nu q. 
\]
\begin{proposition}
For $p,q$ analytic symbols in $A\times B$, given $f\in \mathcal{C}(A,B)$, 
\[
\Op(p)\Op(q) f = \Op(p\# q) f + \mathcal{O}(e^{-1/Ch}). 
\]
\end{proposition}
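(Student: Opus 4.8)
The plan is to reduce the composition of two pseudors to a single stationary-phase computation, exactly in the style already used for the continuity statements above. First I would write out the kernel of $\Op(p)\Op(q)$ acting on $f\in\mathcal{C}(A,B)$ by composing the two oscillatory integrals: inserting the definition of $\Op$ twice, we get (modulo the $\mathcal{O}(e^{-1/Ch})$ boundary error carried by $f$) an integral over $(y,\eta,z,\zeta)$ of $e^{\frac{i}{h}(\langle x-y,\eta\rangle + \langle y-z,\zeta\rangle)} p(x,\eta) q(y,\zeta)$ against the phase $S$ of $f$ in the $z$ variable. Since $f = e^{\frac{i}{h}S}\sigma_h$ is itself a Gaussian-type wave packet, the total phase $\langle x-y,\eta\rangle + \langle y-z,\zeta\rangle + S(z)$ has non-negative imaginary part (the only imaginary contribution comes from $\Im S\geq |z-z(S)|^2/C$), a non-degenerate critical point in $(y,\eta,z,\zeta)$, and exactly one real point. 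So the holomorphic stationary phase theorem \cite[Théorème 2.8, Remarque 2.10]{Singularite-analytique-microlocale} applies and produces a new wave packet in $\mathcal{C}(A,B)$.

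The substance of the argument is then identifying the resulting formal symbol with $p\# q$. I would do the stationary phase in two stages: first integrate in $(z,\zeta)$, which simply returns $f$ evaluated appropriately against $q(y,\cdot)$, giving a wave packet with phase $S(y) + \langle x-y,\eta\rangle$ (after the $\zeta$ integral pins $z$ near $y$), and amplitude built from $q$ and the Taylor coefficients of $S$; then the remaining $(y,\eta)$ integral is the standard oscillatory integral computing the symbol of a pseudodifferential composition. Concretely, the critical point of $\langle x-y,\eta\rangle$ in $(y,\eta)$ (with the perturbation coming from $S$ and the $q$-factor) is at $y=x$, $\eta = \partial S$, and the full asymptotic expansion of the stationary phase integral reorganizes into $\sum_\nu \frac{(ih)^\nu}{\nu!}\partial_\xi^\nu p\,\partial_x^\nu q$ applied to $f$, which is precisely $\Op(p\# q)f$ by definition of $\#$. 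The algebra here is the classical Leibniz-type rearrangement of the stationary phase series; I would either cite it or sketch it at the level of the leading two terms and note that the full identity is the standard symbolic calculus computation in the analytic category.

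The main obstacle is bookkeeping the errors so that the $\mathcal{O}(e^{-1/Ch})$ remainder is genuinely of that size uniformly on bounded subsets of $\mathcal{C}(A,B)$, rather than just $\mathcal{O}(h^\infty)$. This is where the analytic symbol estimates $|a_k|\leq C^k k!$ and the holomorphic realization on a complex neighbourhood enter: one must check that the tails of the formal symbol $p\# q$, summed up to order $1/h$, and the stationary-phase remainder, both contribute exponentially small terms, using that the phase is genuinely analytic with uniformly positive imaginary part away from the real point and on the boundary of the integration domain (inherited from the support properties of $p,q$ and the boundary positivity built into $\mathcal{C}(A,B)$). Once the phase geometry is checked — non-negativity of $\Im$, non-degeneracy of the Hessian, uniqueness of the real critical point, and positivity of the imaginary part on the boundary — the theorem of \cite{Singularite-analytique-microlocale} delivers both the expansion and the exponential control, and the continuity in $f$ follows as before from the fact that all constants in the stationary phase estimates are controlled by finitely many symbol seminorms of $p$, $q$, and the data of $f$.
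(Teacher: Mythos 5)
Your outline has the right global shape (compose the two oscillatory integrals, recognize a non-degenerate critical point, invoke analytic stationary phase, identify the resulting formal series with $p\# q$), but it skips over the one point where this proposition actually requires work: the boundary of the integration domain. The holomorphic stationary phase theorem of \cite{Singularite-analytique-microlocale} does not apply merely because the phase has non-negative imaginary part and a single non-degenerate real critical point; it also requires the contributions from the boundary of the (compact) domain to be exponentially small, i.e.\ a uniformly positive imaginary part near the boundary or a deformation to a good contour. For the composed phase $\langle x-z,\eta\rangle+\langle z-y,\xi\rangle+S(y)$ the only source of positivity is $\Im S(y)\geq |y-x(S)|^2/C$, which controls the innermost space variable only: the phase is exactly real on the whole set $\{y=x(S)\}$, which meets the boundary components $z\in\partial A$, $\eta\in\partial B$, $\xi\in\partial B$. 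Your claim that boundary positivity is ``inherited from the support properties of $p,q$'' cannot be repaired: $p$ and $q$ are \emph{analytic} symbols on $A\times B$, so they admit no cutoff near $\partial(A\times B)$, and the positivity built into $\mathcal{C}(A,B)$ says nothing about the frequency variables or the middle space variable. As written, your application of Th\'eor\`eme 2.8 is therefore unjustified, and this is precisely the difficulty the paper's proof is organized around.

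The paper resolves it in two steps that your proposal would need to reproduce in some form. First, the innermost integral over $y$ is performed exactly: it is the semiclassical Fourier/Legendre transform of $f$, producing a factor $e^{\frac{i}{h}\tilde S(\xi)}\tilde\sigma(\xi)$ whose imaginary part \emph{is} uniformly positive near $\partial B$ — this is what tames the inner frequency variable. Second, the remaining $(z,\eta)$-integral $\int e^{\frac{i}{h}\langle z-x,\xi-\eta\rangle}p(x,\eta)q(z,\xi)\,dz\,d\eta$ (which is literally the integral representation of $(2\pi h)^n\, p\# q(x,\xi)$, so no combinatorial reorganization of a double expansion is needed) has a real phase; one must deform the contour, and the price is an explicit boundary term $\int_\Gamma$ with $\Re(z,\eta)\in\partial(A\times B)$. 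That boundary term is then shown to be exponentially small by combining the positivity of $\Im\tilde S$ near $\partial B$ with a non-stationary phase argument in $\xi$ at the unique real point $\xi=d_{x(S)}S$, where $d_\xi\tilde S+z=z-x(S)\neq 0$ because $z$ sits on the boundary contour while $x(S)$ is interior. Your two-stage scheme (first $(z,\zeta)$, then $(y,\eta)$) could in principle be made to work by applying the continuity proposition twice and comparing amplitudes, but you would still have to confront the same boundary terms at each stage and justify the resummation of the iterated expansions into $p\# q$ in the class of formal analytic symbols; neither step is addressed in your write-up.
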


\begin{proof}
Let us write out the integral considered here in the form
\[
\frac{1}{(2\pi h)^{2n}}\int_{(A\times B)^2} e^{\frac{i}{h}( \langle x-z, \eta\rangle + \langle z-y,\xi\rangle + S(y))}p(x,\eta)q(z,\xi) \sigma(y) dz d\xi dy d\eta. 
\]
Up to the normalization factor, this is
\[
\int_{A\times B^2}e^{\frac{i}{h}( \langle x,\xi\rangle + \langle z-x, \xi - \eta \rangle)} p(x,\eta)q(z,\xi) \left[\int_A e^{\frac{i}{h}(- \langle y, \xi\rangle + S(y))}\sigma(y) dy\right] dz d\xi d\eta. 
\]
The integral in brackets is, according to stationary phase, and up to some universal function of $h$, of the form
\[
e^{\frac{i}{h}\tilde{S}(\xi)}\tilde{\sigma}(\xi)+\mathcal{O}(e^{-1/Ch}). 
\]
Here $\tilde{S}$ is the Legendre transform of $S$, and its imaginary part is uniformly positive near $\partial B$.

According to stationary phase, 
\[
\begin{split}
\int_{A\times B} & e^{\frac{i}{h}\langle z-x,\xi- \eta\rangle} p(x,\eta)q(z,\xi) dz d\eta \\
		&\hspace{-8pt}= (2\pi h)^n p\#q(x,\xi) + \mathcal{O}(e^{-1/Ch}) + \int_{\Gamma} e^{\frac{i}{h}\langle z-x,\xi- \eta\rangle}p(x,\eta)q(z,\xi) dz d\eta.
\end{split}
\]
Here, we tried to replace integration over $A\times B$ by integration over a contour for which the imaginary part of $\langle z-x, \xi-\eta\rangle$ is positive near the boundary, to be able to apply a stationary phase estimate estimate. The price to pay was the appearance of the boundary term $\int_\Gamma$, where $\Gamma$ is a contour depending on $x$ and $\xi$, so that for $(z,\eta)\in \Gamma$, $\Re(z,\eta)\in \partial(A\times B)$, and $\Im(\langle z-x,\xi-\eta \rangle)\geq 0$ on $\Gamma$. The exponential error is here because we may have had to change the realization of the formal symbol. 

It remains to consider
\[
\int_B e^{\frac{i}{h}(\tilde{S}(\xi)+ \langle x,\xi\rangle)} \int_{\Gamma} e^{\frac{i}{h}\langle z -x, \xi-\eta\rangle} p(x,\eta)q(z,\xi) dzd\eta d\xi. 
\]
When $\xi$ is near $\partial B$, the integrand is exponentially small. Additionally, the phase is only real when $\xi = d_{x(S)}S$, which is inside $B$. At such a point, the phase is not stationary in $\xi$, since its derivative with respect to $\xi$ is given by 
\[
d_\xi \tilde{S} + z = z - x(S) \neq 0. 
\] 
We can thus apply a non-stationary phase contour deformation argument to prove that this contribution is exponentially small. 
\end{proof}

\begin{proposition}\label{prop:pseudos-downstairs}
The compositions $T_a S_b$ and $S_b T_a$ are continuous respectively on $\mathcal{C}(A\times B)$ and $\mathcal{C}(A,B)$. Additionally, we can find $c$ an analytic symbol on $A\times B$ such that acting on $\mathcal{C}(A,B)$,
\[
S_b T_a f = \Op(c) f + \mathcal{O}(e^{-1/Ch}).
\]
with $c$ depending continuously on $a,b$ as an analytic symbol, and
\[
c = ab \sigma + \mathcal{O}(h),
\]
where $\sigma$ is a real analytic function, depending only on $\psi$ and $\psi^\ast$\footnote{We have the formula $\sigma(x,\alpha_\xi) = \left(\det \frac{1}{i} \partial^2_{\alpha_x}(\psi^\ast(x,\alpha)+\psi(\alpha,y))_{|x=y=\alpha_x} \right)^{-1/2}$.}
\end{proposition}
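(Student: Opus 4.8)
The plan is to reduce everything to a composition of oscillatory integrals and then apply the holomorphic stationary phase theorem. First I would write out the Schwartz kernel of $S_b T_a$ explicitly: for $f \in \mathcal{C}(A,B)$,
\[
S_b T_a f(x) = \frac{1}{2^n (h\pi)^{3n/2}} \int_{A\times B} \int_A e^{\frac{i}{h}(\psi^\ast(x,\beta) + \psi(\beta,y))} b_h(\beta,x)\, a_h(\beta,y)\, f(y)\, dy\, d\beta.
\]
The inner integral over $\beta \in A\times B$ (which is $2n$-dimensional) is the key one: I would apply stationary phase in $\beta$ to the phase $\beta \mapsto \psi^\ast(x,\beta) + \psi(\beta,y)$, with $x,y$ as parameters. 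The critical point equation is $\partial_\beta \psi^\ast(x,\beta) + \partial_\beta \psi(\beta,y) = 0$. By the Taylor expansions \eqref{eq:Taylor-psi} and its analogue for $\psi^\ast$, at $x=y$ the critical point is $\beta$ with $\beta_x = x$ and $\beta_\xi$ determined, and a direct computation shows this critical point is real there; moreover the $2n\times 2n$ Hessian $\partial_\beta^2(\psi^\ast+\psi)$ at that point is nondegenerate with the correct signature/positivity because both $\Lambda_\to$ and $\Lambda_\leftarrow$ are positive Lagrangians intersecting $\Sigma$ transversally — this is exactly where the geometry from \S\ref{sec:study-phase} and the transversality lemmas get used. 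One then checks the phase obtained at the critical value, as a function of $(x,y)$, equals $\langle x-y,\xi\rangle$ for the appropriate covariable up to a term that is $\mathcal{O}(|x-y|^2)$ with nonnegative imaginary part, which after a further (exact, since $f$ is a Gaussian wave packet) manipulation is absorbed; the upshot is that $S_b T_a f$ has the form $\Op(c)f$ with $c$ an analytic symbol.

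Concretely I would proceed as follows. Step 1: perform the holomorphic stationary phase in $\beta$, using \cite[Théorème 2.8]{Singularite-analytique-microlocale}, to get $S_b T_a f(x) = \frac{1}{(2\pi h)^n}\int e^{\frac{i}{h}\Phi(x,y)} c_h(x,y) f(y)\, dy + \mathcal{O}(e^{-1/Ch})$ where $\Phi(x,y) = \mathrm{VC}_\beta(\psi^\ast(x,\beta)+\psi(\beta,y))$ and $c_h$ carries the Hessian determinant factor, giving the leading term $c_0 = a(\beta_c,y)b(\beta_c,x)(\det \tfrac{1}{i}\partial_\beta^2(\psi^\ast+\psi))^{-1/2}$ evaluated at the critical point. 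Step 2: analyze $\Phi$. Using the reproducing structure encoded in $\Lambda_\to, \Lambda_\leftarrow$ and Darboux coordinates $\vartheta = \xi dx$, show $\partial_x \Phi(x,x) = -\partial_y\Phi(x,x) = \xi$ at the relevant point and that $\Phi(x,y) + \langle y-x,\xi\rangle$ vanishes to second order; combined with $\Im\Phi \geq 0$ this lets one write, via a change of the $y$-contour / completing the square against the Gaussian in $f$, the oscillatory integral in pseudodifferential form $\Op(c)f$. Step 3: identify the leading symbol: at $x=y$ the critical point $\beta_c$ has $\beta_x = x$, so $c(x,\xi) = a(\beta_c,x)b(\beta_c,x)\sigma(x,\xi) + \mathcal{O}(h)$ with $\sigma = (\det\tfrac{1}{i}\partial_{\alpha_x}^2(\psi^\ast(x,\alpha)+\psi(\alpha,y))_{|x=y=\alpha_x})^{-1/2}$, matching the footnote. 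Step 4: continuity in $\mathcal{C}(A,B)$ and continuous dependence of $c$ on $(a,b)$ follow as in the previous propositions, since stationary-phase constants are controlled by symbol seminorms and lower bounds on the Hessian and on $\Im\Phi$ near the boundary. The statement for $T_a S_b$ on $\mathcal{C}(A\times B)$ is handled by the same computation with the roles of $\psi,\psi^\ast$ and the two sides exchanged; one should remark that the output is automatically supported (microlocally) where the real point lies in $\Sigma_\R$, as noted just before the proposition.

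The main obstacle I expect is Step 2, extracting the genuinely pseudodifferential form with a \emph{real} phase $\langle x-y,\xi\rangle$: a priori $\Phi(x,y)$ is only a complex phase with $\Im\Phi\geq 0$, vanishing on the diagonal, so $\Op(c)$ is at first only defined up to an FIO-with-complex-phase that is $\sim$ the identity. The resolution is that acting on $\mathcal{C}(A,B)$ — functions that are Gaussian wave packets concentrated at a single real point with covector in $B$ — the extra Gaussian factor $e^{\frac{i}{h}\mathcal{O}(|x-y|^2)}$ can be moved onto the amplitude by a contour deformation in $y$ (an exact Gaussian computation, since $f$ is itself Gaussian up to an analytic symbol), so that on this space $S_bT_a$ agrees with a true pseudor up to $\mathcal{O}(e^{-1/Ch})$. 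Verifying that the deformed contour stays in the region where all symbols and the positivity \eqref{eq:positivity-C(A,B)} are under control, uniformly in bounded subsets of $\mathcal{C}(A,B)$, is the delicate bookkeeping; the nondegeneracy input it relies on is precisely \eqref{eq:tranversality} and the positivity of $\Lambda_\to$, already established above.
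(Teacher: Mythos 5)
Your Step 1 contains a genuine error that the rest of the argument is built on. The stationary phase in the full $2n$-dimensional variable $\beta=(\beta_x,\beta_\xi)$ is \emph{degenerate}: from \eqref{eq:Taylor-psi} and its analogue for $\psi^\ast$ one has
\[
\psi^\ast(x,\beta)+\psi(\beta,y)=\langle x-y,\beta_\xi\rangle+\mathcal{O}\bigl(|\beta_x-x|^2+|\beta_x-y|^2\bigr),
\]
so at $x=y$ the phase vanishes identically on the $n$-dimensional set $\{\beta_x=x\}$; every such point is critical, the $\beta_\xi\beta_\xi$ and $\beta_x\beta_\xi$ blocks of the Hessian vanish there (the linear-in-$\beta_\xi$ terms from $\psi$ and $\psi^\ast$ cancel), and the positivity of $\Lambda_\to$ and $\Lambda_\leftarrow$ does not rescue this, since $\Im(\psi^\ast+\psi)$ is insensitive to $\beta_\xi$ on $\{\beta_x=x=y\}$. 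This is not a technicality: the surviving $\beta_\xi$-integral is precisely the frequency integral of the resulting pseudor, which is why the footnote's formula for $\sigma$ involves only $\det\frac1i\partial^2_{\alpha_x}(\psi^\ast+\psi)$ and why $\sigma$ is a function of $(x,\alpha_\xi)$ with $\alpha_\xi$ not integrated out. Consequently the object you posit as the output of Step 1 --- a kernel $e^{\frac{i}{h}\Phi(x,y)}c_h(x,y)$ with a single nondegenerate complex phase and a bounded amplitude --- does not exist (a pseudor kernel is conormal to the diagonal, of size $h^{-n}$ there, and cannot be written in that form), so the ``main obstacle'' you identify in Step 2 is an artifact of the wrong reduction rather than something that can be absorbed by exploiting the Gaussian structure of $f$.

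The paper's proof circumvents this by first inserting the approximate Fourier inversion $f(y)=\frac{1}{(2\pi h)^n}\int e^{\frac{i}{h}\langle y-z,\eta\rangle}f(z)\,dz\,d\eta+\mathcal{O}(e^{-1/Ch})$, valid on $\mathcal{C}(A,B)$. This introduces the frequency variable $\eta$ and lets one factor out the exact pseudor phase $\langle x-z,\eta\rangle$, leaving an integral over $(y,\alpha)$ with phase $\Psi=\psi^\ast(x,\alpha)+\psi(\alpha,y)+\langle y-x,\eta\rangle$; the extra coupling term $\langle x-y,\alpha_\xi-\eta\rangle$ makes the critical point $y=x$, $\alpha_x=x$, $\alpha_\xi=\eta$ nondegenerate (the cross block in $(y,\alpha_\xi)$ is invertible, and the $\alpha_x$ block is the one appearing in the footnote). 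You would also need the paper's second ingredient: $\Im\Psi$ has no uniform lower bound as $\alpha_\xi\to\partial B$, and this is handled by a non-stationary-phase contour deformation using $|\partial_{y,\alpha_x}\Psi|\gtrsim|\alpha_\xi-\eta|$, not by positivity alone. Your continuity claims in Step 4 are fine once the reduction is corrected, but as written the core computation does not go through.
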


The proof of this lemma follows the ideas of the proof of Lemma 2.8 in \cite{GBJ}, but the general idea is well known to specialists.
\begin{proof}
Let us start by observing that
\[
f(y) = \frac{1}{(2\pi h)^n} \int_{A\times B} e^{\frac{i}{h}\langle y-z,\eta\rangle} f(z) dz d\eta + \mathcal{O}(e^{-1/Ch}),
\]
with the exponentially small remainder depending continuously on $f$. We insert this identity into $S_b T_a f(x)$. Up to a normalization factor $w(h)$ and an exponentially small error, we obtain for $S_b T_a f(x)$
\[
\int_{(A\times B) \times A \times (A \times B)} \hspace{-30pt} e^{\frac{i}{h}\left( \psi^\ast(x,\alpha)+\psi(\alpha, y) + \langle y-z, \eta\rangle\right)} b(x,\alpha)a(\alpha,y) f(z) d\alpha dy dz d\eta. 
\]
We can rewrite this slightly in the form
\begin{equation}\label{eq:nice-expression-for-S_bT_af}
\int_{A\times B} \hspace{-15pt} e^{\frac{i}{h}\langle x-z,\eta\rangle} f(z) \left(\int_{A\times (A\times B)} \hspace{-30pt} e^{\frac{i}{h}\left(\psi^\ast(x,\alpha)+\psi(\alpha,y)+\langle y-x,\eta\rangle \right)}b(x,\alpha) a(\alpha,y) dy d\alpha \right)dz d\eta
\end{equation}
It suffices thus to study 
\[
\tilde{c}(x,\eta)= w'(h)\int_{A\times (A\times B)} \hspace{-30pt} e^{\frac{i}{h}\left(\psi^\ast(x,\alpha)+\psi(\alpha,y)+\langle y-x,\eta\rangle \right)}b(x,\alpha) a(\alpha,y) dy d\alpha
\]
($w'(h)$ being the relevant normalization factor). The phase function $\Psi$ here satisfies
\[
\Psi = \langle x-y,\alpha_\xi-\eta\rangle + \mathcal{O}(|\alpha_x-x|^2+|\alpha_x-y|^2),
\]
and
\[
\Im \Psi \geq \frac{1}{C}( |x-y|^2 + |\alpha_x - (x+y)/2|^2). 
\]
As long as $x$ does not encounter the boundary $\partial A$, we have a uniform positive lower bound on the imaginary part of $\Psi$ near the boundary $(y,\alpha_x)\in \partial (A^2)$. When $\alpha_\xi$ is close to the boundary $\partial B$, we do \emph{not} have a positive lower bound on $\Im \Psi$. However, we observe that whenever $\Im \Psi$ is small,
\[
|\partial_{y,\alpha_x} \psi | \geq \frac{1}{C} |\alpha_\xi-\eta|. 
\]
In particular, provided $\eta$ is not too close to $\partial B$, we can apply a non-stationary phase argument in $(y,\alpha_x)$ to deform the boundary of the contour of integration, to ensure that the imaginary part of $\Psi$ satisfies a global positive lower bound on the boundary of the contour. 

We deduce that stationary phase applies to $\tilde{c}$, showing that for any $A'\times B'$ relatively compact in $A\times B$, $\tilde{c}$ is an analytic symbol on $A'\times B'$. The next step is to notice that if $\mathbf{c}$ is the formal symbol associated with $\tilde{c}$, it is obtained by a stationary phase expansion, and thus its coefficients can be expressed in $A\times B$ in terms of some universal coefficients, and the derivatives of $\psi^\ast$, $\psi$, $a$ and $b$, at points which remain inside the domains where they satisfy uniform symbol estimates. We deduce that $\mathbf{c}$ is a formal analytic symbol on $A\times B$, and we can thus pick a realization $c$ that is an analytic symbol on $A\times B$. 

Now, if we replace $\tilde{c}$ by $c$ in \eqref{eq:nice-expression-for-S_bT_af}, we will be making an error, that is exponentially small \emph{locally} uniformly in $A\times B$. However, since $f\in\mathcal{C}(A,B)$, we know that $f$ is already itself exponentially small near the boundary, so that we can absorb this loss of uniformity, uniformly in bounded sets of $\mathcal{C}(A,B)$.
\end{proof}

We close this section with two observations.  Operators of the form $T_a S_b$ have the form we were seeking. 
\begin{proposition}
Let $a$, $b$ be analytic symbols. Then acting on $\mathcal{C}(A\times B)$, 
\[
T_a S_b f (\alpha)= \frac{1}{(2\pi h)^n}\int_{A\times B} e^{\frac{i}{h}\phi(\alpha,\beta)} c(\alpha,\beta)f(\beta)d\beta + \mathcal{O}(e^{-1/Ch}),
\]
where $c$ is an analytic symbol defined in a neighbourhood of the diagonal, and $\epsilon>0$ is small enough. 
\end{proposition}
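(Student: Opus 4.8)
The plan is to write out the kernel of $T_a S_b$ as an oscillatory integral in the inner variable $x$, apply the holomorphic stationary phase theorem there, and then read off from the structural results of \S\ref{sec:study-phase} that the resulting critical value is exactly $\phi$. Concretely, by definition of $T_a$ and $S_b$,
\[
T_a S_b f(\alpha) = \frac{1}{2^n(h\pi)^{3n/2}} \int_{A\times B} \left( \int_A e^{\frac{i}{h}(\psi(\alpha,x)+\psi^\ast(x,\beta))} a_h(\alpha,x) b_h(\beta,x)\, dx\right) f(\beta)\, d\beta,
\]
so the whole problem is to understand the inner integral $K(\alpha,\beta) = \frac{1}{2^n(h\pi)^{3n/2}}\int_A e^{\frac{i}{h}(\psi(\alpha,x)+\psi^\ast(x,\beta))} a_h(\alpha,x) b_h(\beta,x)\, dx$ as a kernel: an analytic symbol times $e^{\frac{i}{h}\phi}$ near the diagonal, and $\mathcal{O}(e^{-c/h})$ away from it.

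First I would dispose of the off-diagonal region. Since $\Im\psi(\alpha,x)\geq |\alpha_x-x|^2/C$ and $\Im\psi^\ast(x,\beta)\geq |x-\beta_x|^2/C$, the phase in $K$ has imaginary part at least $|\alpha_x-\beta_x|^2/(2C)$ on all of $A$, so $K$ is $\mathcal{O}(e^{-c/h})$ pointwise once $|\alpha_x-\beta_x|$ is bounded below. When $|\alpha_x-\beta_x|$ is small but $|\alpha_\xi-\beta_\xi|$ is not, the Taylor expansions \eqref{eq:Taylor-psi} show that on the region where $\Im(\psi+\psi^\ast)$ is small (i.e. $x$ near $\alpha_x\approx\beta_x$) one has $|\partial_x\psi(\alpha,x)+\partial_x\psi^\ast(x,\beta)|\geq |\alpha_\xi-\beta_\xi|/C$; a non-stationary phase contour deformation in $x$ then makes the imaginary part of the phase uniformly positive on the boundary of the new contour, so again $K=\mathcal{O}(e^{-c/h})$. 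Hence only $(\alpha,\beta)$ in a fixed, arbitrarily small neighbourhood of the diagonal contribute, and this is exactly the domain where $\phi$ and the amplitude $c$ will be defined.

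On that neighbourhood I would apply holomorphic stationary phase in $x$. The equation $\partial_x\psi(\alpha,x)+\partial_x\psi^\ast(x,\beta)=0$ has, by \eqref{eq:Taylor-psi} and the implicit function theorem, a unique solution $x_c(\alpha,\beta)$ with $x_c\approx\alpha_x\approx\beta_x$, and non-degeneracy of the Hessian $\partial_x^2\psi(\alpha,x_c)+\partial_x^2\psi^\ast(x_c,\beta)$ is precisely the statement that the canonical relations of $T_a$ and $S_b$ compose transversally, i.e. the transversal intersection $\mathcal{J}_\phi\cap\mathcal{J}_\phi^\ast=\Sigma_\phi$ of Proposition \ref{prop:involutive} (it can also be checked by hand on the quadratic models, as in the lemma preceding Proposition \ref{prop:involutive}). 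The remaining hypotheses of \cite[Théorème 2.8 and Remarque 2.10]{Singularite-analytique-microlocale} — non-negative imaginary part, uniform positivity on the boundary of the deformed contour, a single real critical point — hold, so $K(\alpha,\beta)=(2\pi h)^{-n}e^{\frac{i}{h}\Phi(\alpha,\beta)}c(\alpha,\beta)+\mathcal{O}(e^{-c/h})$ with $\Phi(\alpha,\beta)=VC_x(\psi(\alpha,x)+\psi^\ast(x,\beta))$ and $c$ an analytic symbol near the diagonal whose leading term is a non-vanishing determinant factor (built from $\partial_x^2(\psi+\psi^\ast)$) times $a_h(\alpha,x_c)b_h(\beta,x_c)$; that the formal symbol is analytic is standard, its coefficients being universal polynomial expressions in the derivatives of $\psi,\psi^\ast,a,b$ at $x_c$, which stays inside the domains of uniform estimates.

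It then remains to identify $\Phi$ with $\phi$. Here the structure from \S\ref{sec:study-phase} enters: $\psi$ generates $\Lambda_\to$ and $\psi^\ast$ generates $\Lambda_\leftarrow$, and by \eqref{eq:pre-factorization of phi} together with its mirror statement the composed canonical relation $\Lambda_\to\circ\Lambda_\leftarrow$ is $\Lambda_\phi=\{(\alpha,\partial_\alpha\phi;\beta,-\partial_\beta\phi)\}$. Since $\Phi=VC_x(\psi+\psi^\ast)$ is a generating function of this composition, $\partial_\alpha\Phi=\partial_\alpha\phi$ and $\partial_\beta\Phi=\partial_\beta\phi$, so $\Phi-\phi$ is locally constant; normalising $\psi,\psi^\ast$ so that $\psi(0,0)=\psi^\ast(0,0)=0$ and using $x_c(0,0)=0$ gives $\Phi(0,0)=0=\phi(0,0)$, hence $\Phi\equiv\phi$ near the diagonal. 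Inserting this into the $\beta$-integral yields the claimed form, and continuity of $c\mapsto$ the operator, as well as its action on $\mathcal{C}(A\times B)$, follows as in the earlier propositions since all stationary-phase constants are controlled by finitely many symbol semi-norms together with lower bounds on the Hessian and on the imaginary part of the phase near the boundary. The main obstacle is Step three in spirit but Step two in practice: turning the abstract transversality $\mathcal{J}_\phi\cap\mathcal{J}_\phi^\ast=\Sigma_\phi$ into a \emph{uniform} non-degeneracy of the $x$-Hessian over the whole diagonal neighbourhood, and carefully checking the positivity and boundary conditions that license the holomorphic stationary phase theorem; the identification $\Phi=\phi$, though conceptually the heart of the matter, is essentially a bookkeeping consequence of the Lagrangian factorisation already established.
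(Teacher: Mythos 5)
Your proposal is correct and follows essentially the same route as the paper: apply holomorphic stationary phase in the intermediate variable $x$, with the identification of the critical value with $\phi$ coming from the fact that $\psi$ and $\psi^\ast$ were constructed to generate $\Lambda_\to$ and $\Lambda_\leftarrow$, whose composition is $\Lambda_\phi$. The paper's own proof is much terser (it compresses the phase identification into ``by construction of $\psi$ and $\psi^\ast$'' and handles the boundary by using that $f\in\mathcal{C}(A\times B)$ is already exponentially small near $\partial(A\times B)$), but the substance is the same and your extra detail on the Hessian non-degeneracy and the off-diagonal decay is consistent with what the paper establishes elsewhere.
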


\begin{proof}
By construction of $\psi$ and $\psi^\ast$, it suffices to prove that we can apply a stationary phase argument. Let us consider the integral defining $T_a S_b f$, up to the normalization factor:
\[
\int_{A\times A\times B} e^{\frac{i}{h}(\psi(\alpha,x)+\psi^\ast(x,\beta))} a(\alpha,x) b(x,\beta) f(\beta)dxd\beta. 
\]
When $\beta$ is near the boundary of $A\times B$, the integrand is exponentially small because $f\in \mathcal{C}(A\times B)$. When $\beta$ is not close to the boundary, but $x$ is, then $\Im \psi^\ast(x,\beta)$ is positive, so that all in all, the contributions to the integral from the boundary are exponentially small, and this is all we need to apply stationary phase in $x$. 

Also as before, changing realization for $c$, we can take $c$ independent of $f$, up to adding an exponential error. 
\end{proof}

We decided to study the action of our operators on a very particular set of functions, and we need at some point to come back to the action on all smooth functions, i.e consider the actual Schwartz kernel. 
\begin{proposition}\label{prop:FromC(AxB)toKernel}
Let $A$ be an operator with kernel of the form $e^{\frac{i}{h}\phi} b+ \mathcal{O}(e^{-1/Ch})$, with $b$ an analytic symbol. Assume that on $\mathcal{C}(A\times B)$, $A$ is $\mathcal{O}(e^{-1/Ch})$. Then $b=\mathcal{O}(e^{-1/Ch})$. 
\end{proposition}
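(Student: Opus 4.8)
The plan is to probe the kernel of $A$ with Gaussian wave packets based at points of $\Sigma_\R$ and to read off $b$ from a stationary phase expansion. Write $\mathbf{b}=\sum_k h^k b_k$ for the formal symbol of $b$, the $b_k$ being real analytic near the diagonal of $(A\times B)^2$. A realization of a formal symbol whose coefficients all vanish is $\mathcal{O}(e^{-c/h})$, and a real analytic function vanishing together with all its derivatives along the diagonal vanishes in a neighbourhood of it; hence it suffices to prove that every $b_k$ and all its partial derivatives vanish along the diagonal.

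First I would fix an interior point $\gamma_0$ of $A\times B$ and, for an arbitrary real analytic symbol $\sigma$ and a symmetric matrix $M$ close to $i\,\mathrm{Id}$, introduce
\[
f_{\gamma_0}(\beta)=e^{\frac{i}{h}S(\beta)}\sigma(\beta),\qquad S(\beta)=\langle\beta-\gamma_0,\vartheta_{\gamma_0}\rangle+\tfrac12\langle\beta-\gamma_0,M(\beta-\gamma_0)\rangle.
\]
Then $S$ is real only at $\gamma_0$, $\Im S\geq|\beta-\gamma_0|^2/C$, and $dS(\gamma_0)=\vartheta_{\gamma_0}$, so \eqref{eq:condition-real-point-C(AxB)} holds and $dS(\gamma_0)\in B\times E$; thus $f_{\gamma_0}\in\mathcal{C}(A\times B)$, uniformly as $\gamma_0$, $\sigma$, $M$ vary in compact sets. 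Next I would compute $Af_{\gamma_0}(\alpha)=\int e^{\frac{i}{h}(\phi(\alpha,\beta)+S(\beta))}b(\alpha,\beta)\sigma(\beta)\,d\beta$. Away from $\beta=\gamma_0$ the phase has $\Im(\phi+S)\geq(|\alpha-\beta|^2+|\beta-\gamma_0|^2)/C$, so that contribution is $\mathcal{O}(e^{-c/h})$; near $\beta=\gamma_0$, by \eqref{eq:0-on-diagonal} and the lower bound $\Im\phi(\alpha,\beta)\geq|\alpha-\beta|^2/C$, the phase has a unique critical point $\beta_c(\alpha)$, with $\beta_c(\gamma_0)=\gamma_0$, whose Hessian has positive definite imaginary part, and the holomorphic stationary phase theorem gives $Af_{\gamma_0}(\alpha)=e^{\frac{i}{h}\Phi(\alpha)}\tilde b(\alpha)+\mathcal{O}(e^{-c/h})$ with $\Phi(\alpha)=VC_\beta(\phi(\alpha,\beta)+S(\beta))$. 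The envelope identity gives $d\Phi(\gamma_0)=\partial_\alpha\phi(\gamma_0,\gamma_0)=\vartheta_{\gamma_0}$, $\Phi(\gamma_0)=0$, and $\Im\Phi\geq|\alpha-\gamma_0|^2/C$; so $Af_{\gamma_0}\in\mathcal{C}(A\times B)$ and the hypothesis forces $Af_{\gamma_0}=\mathcal{O}(e^{-c/h})$. Since $\Im\Phi$ vanishes only at $\alpha=\gamma_0$, evaluating there yields $\tilde b(\gamma_0)=\mathcal{O}(e^{-c/h})$ as an analytic symbol, i.e.\ every coefficient of the formal symbol of $\tilde b$ vanishes at $\gamma_0$.

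It then remains to unwind the stationary phase formula, and this is the delicate part. The coefficient of $h^p$ in the formal symbol of $\tilde b$, evaluated at $\alpha=\gamma_0$, is a sum of universal differential operators of order $\leq 2q$ in $\beta$, with coefficients built from the derivatives of $\phi$ and from $M$, applied to $b_\ell(\gamma_0,\cdot)\sigma_{\ell'}$ at $\gamma_0$, over $\ell+\ell'+q=p$; so it is an explicit linear combination of the quantities $\partial_\beta^\nu b_\ell(\gamma_0,\gamma_0)$ with $\ell\leq p$ and of derivatives of the $\sigma_{\ell'}$ at $\gamma_0$. I would argue by induction on $p$: the bottom coefficient, with $\sigma\equiv 1$, gives $b_{k_0}(\gamma_0,\gamma_0)=0$ for all $\gamma_0$; at each further stage the freedom to tune $\sigma$ and $M$ produces enough independent relations to isolate the new terms $\partial_\beta^\nu b_\ell(\gamma_0,\gamma_0)$, and differentiating in $\gamma_0$ the identities already obtained supplies the $\alpha$-derivatives, so that $\partial_\alpha^\mu\partial_\beta^\nu b_k(\gamma_0,\gamma_0)=0$ for all $k,\mu,\nu$. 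By real analyticity, $b=\mathcal{O}(e^{-c/h})$. The main obstacle I anticipate is precisely this last bookkeeping: one must check that, as $\gamma_0$, $\sigma$ and $M$ vary, the stationary phase coefficients at the peak detect \emph{every} jet of $b$ transverse to the diagonal. The subtlety is that the critical point map $\alpha\mapsto\beta_c(\alpha)$ has rank only $n$ on the diagonal (as $\partial_\alpha\partial_\beta\phi$ does), so the transverse information must be recovered through the amplitude $\sigma$ and the Hessian $M$, not from varying $\alpha$; everything else is a routine application of holomorphic stationary and non-stationary phase.
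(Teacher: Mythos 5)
Your proposal is correct in outline and probes the kernel in the same general way as the paper --- feed well-chosen elements of $\mathcal{C}(A\times B)$ into the operator and read off the amplitude from a holomorphic stationary phase expansion --- but the choice of test function is genuinely different, and the difference matters for the final step. The paper tests against $f_\gamma(\beta)=e^{\frac{i}{h}\phi(\beta,\gamma)}$: by the reproducing property \eqref{eq:reproducing}, the critical value of $\beta\mapsto\phi(\alpha,\beta)+\phi(\beta,\gamma)$ is exactly $\phi(\alpha,\gamma)$, so the output is $e^{\frac{i}{h}\phi(\alpha,\gamma)}c(\alpha,\gamma)+\mathcal{O}(e^{-1/Ch})$ with $c$ a symbol in the two free variables $(\alpha,\gamma)$. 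Since $\Im\phi(\alpha,\gamma)$ is bounded on a fixed neighbourhood of the diagonal, the hypothesis annihilates the whole formal symbol of $c$ there, yielding a $4n$-parameter family of identities; already the leading one forces $b_{k_0}$ to vanish on the $3n$-dimensional set $\{(\alpha,\gamma_c(\alpha,\gamma))\}$ rather than merely on the diagonal, and the transverse jets are then extracted from the higher coefficients. Your Gaussian packets, evaluated only at the peak $\alpha=\gamma_0$, produce one number per power of $h$ and per choice of $(\gamma_0,\sigma,M)$, so all the information transverse to the diagonal must be recovered by tuning $\sigma$ and $M$. This can be made to close --- the span of $\bigl\{(Q^{-1})^{\otimes q}\bigr\}$ as the Hessian $Q$ ranges over an open set is all of $\mathrm{Sym}^{2q}$, the cubic-and-higher phase corrections only involve lower jets which vanish by induction, and the $\alpha$-jets follow by differentiating the established identities in $\gamma_0$ --- but the bookkeeping you rightly single out as the crux is noticeably heavier than in the paper's version; in fairness, the paper is equally terse at the corresponding step (``studying the coefficients of the expansion\dots''). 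Note also that you discard usable information by evaluating only at the peak: since $\Im\Phi(\alpha)$ stays below the exponential rate on a fixed neighbourhood of $\gamma_0$, the hypothesis in fact kills the full formal symbol of $\tilde b(\alpha)$ for $\alpha$ near $\gamma_0$, and the resulting $n$ extra directions $\alpha\mapsto\beta_c(\alpha)$, combined with varying $\gamma_0$, would put you essentially back in the paper's situation and lighten the induction.
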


\begin{proof}
Notice that for $\gamma\in A\times B$,
\[
f_\gamma(\beta)=  e^{\frac{i}{h} \phi(\beta,\gamma)}
\]
defines an element of $\mathcal{C}(A\times B)$. By stationary phase, we have an expansion for $A f_\gamma(\alpha)$ in the form $e^{\frac{i}{h}\phi(\alpha,\gamma)}c(\alpha,\gamma) + \mathcal{O}(e^{-1/Ch})$. This ensures that near the diagonal $c=\mathcal{O}(e^{-1/Ch})$. However, studying the coefficients of the expansion given by stationary phase, this implies that the formal symbol of $b$ vanishes in a neighbourhood of the diagonal, and thus vanishes everywhere.  
\end{proof}

\subsection{Construction of the projector}
\label{sec:projector}

Our purpose now is to construct operators that satisfy \eqref{eq:Def-approximate-projector}, i.e approximate projectors. We will drop ``approximate'' to lighten the redaction. This is not completely ludicrous because, we could also decide to consider their action modulo exponentially small remainders, with $H_\varphi$-like spaces. 

In this section, we do not assume anymore that $U$ is simply connected. We start with the standard argument
\begin{proposition}
We can find symbols $a,b$ such that acting on $\mathcal{C}(A,B)$,
\[
S_b T_a = 1 + \mathcal{O}(e^{-1/Ch}).
\]
As a consequence, on $\mathcal{C}(A\times B)$, 
\[
(T_a S_b)^2 = T_a S_b + \mathcal{O}(e^{-1/Ch}). 
\]
If $\phi$ is a self-adjoint phase, we can ensure that $T_a S_b$ is formally self-adjoint. 
\end{proposition}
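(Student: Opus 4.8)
The plan is to bootstrap from Proposition~\ref{prop:pseudos-downstairs}. For any analytic symbols $a,b$ it gives $S_b T_a = \Op(c) + \mathcal{O}(e^{-1/Ch})$ on $\mathcal{C}(A,B)$, where the formal symbol $\mathbf{c}=\sum h^k c_k$ is produced by a stationary phase expansion; hence each $c_k$ is a universal polynomial in $\sigma$, finitely many derivatives of $\psi,\psi^\ast$, and the coefficients $a_0,\dots,a_k,b_0,\dots,b_k$ together with their derivatives, and — because a term carrying $a_k$ (or $b_k$) with at least one extra derivative also carries a power of $h$ — it has the form
\[
c_k=\sigma\,(a_0 b_k+a_k b_0)+P_k\big(a_0,\dots,a_{k-1},b_0,\dots,b_{k-1}\big),
\]
with $\sigma$ real analytic and nowhere vanishing (footnote to Proposition~\ref{prop:pseudos-downstairs}). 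I want $\mathbf{c}=1$. Setting $b_0=1$ and $b_k=0$ for $k\ge1$, the equations become $\sigma a_0=1$ at order $0$ and $\sigma a_k=-P_k(a_0,\dots,a_{k-1},1,0,\dots,0)$ at order $k\ge1$; since $\sigma^{-1}$ is real analytic on a fixed complex neighbourhood of $\overline{A\times B}$, each equation is solved by dividing by $\sigma$, and the usual majorant/Cauchy-estimate argument for stationary phase expansions yields $|a_k|\le C^k k!$, so $\mathbf{a}=\sum h^k a_k$ is a formal analytic symbol. Fix a realization $a$ and put $b=1$.

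With this choice $\mathbf c=1$, so $c$ and the constant $1$ realize the same formal symbol and therefore $c=1+\mathcal{O}(e^{-1/Ch})$ locally uniformly. Splitting $\Op(c)=\Op(1)+\Op(c-1)$, the operator $\Op(c-1)$ has exponentially small symbol, hence is $\mathcal{O}(e^{-1/Ch})$ on $\mathcal{C}(A,B)$, while $\Op(1)=1+\mathcal{O}(e^{-1/Ch})$ on $\mathcal{C}(A,B)$ by the Fourier inversion formula already used at the start of the proof of Proposition~\ref{prop:pseudos-downstairs}. Thus $S_b T_a=1+\mathcal{O}(e^{-1/Ch})$ on $\mathcal{C}(A,B)$. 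For the consequence, write $\Pi=T_aS_b$ and, for $f\in\mathcal{C}(A\times B)$,
\[
\Pi^2 f=T_a\big(S_bT_a(S_bf)\big)=T_a\big(S_bf+r\big)=\Pi f+T_a r,
\]
where $r=(S_bT_a-1)(S_bf)=\mathcal{O}(e^{-1/Ch})$ uniformly for $f$ in bounded subsets of $\mathcal{C}(A\times B)$ (continuity of $S_b$ on $\mathcal{C}(A\times B)$), and $T_a r=\mathcal{O}(e^{-1/Ch})$ since the kernel of $T_a$ has modulus $\le C h^{-3n/4}$, so $T_a$ turns exponentially small functions into exponentially small functions. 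This is the second assertion.

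For the self-adjoint case, $\phi(\alpha,\beta)=-\overline{\phi(\beta,\alpha)}$ forces $\mathcal{J}_\phi^\ast=\overline{\mathcal{J}_\phi}$ (Proposition~\ref{prop:involutive}), so $\psi^\ast$ may be chosen with $\psi^\ast(x,\alpha)=-\overline{\psi(\alpha,x)}$; then $S_b=T_a^\ast$ whenever $b=\overline a$, and the footnote function becomes $\sigma=\big(\det 2\,\Im\partial^2_{\alpha_x}\psi\big)^{-1/2}>0$ (positivity of $\Lambda_\to$). I redo the recursion with $b=\overline a$: the symbol of $T_a^\ast T_a$ is now sesquilinear in $\mathbf a$, with $c_0=\sigma|a_0|^2$ and, for $k\ge1$, $c_k=2\sigma\,\Re(a_0\overline{a_k})+K_k(a_0,\dots,a_{k-1})$. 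Take $a_0=\sigma^{-1/2}>0$, so $c_0=1$. Since $\Op(c)$ is formally self-adjoint for every $a$ (it represents $T_a^\ast T_a$), once $c_0=1$ and $c_1=\dots=c_{k-1}=0$ the self-adjointness relation $2i\,\Im c_k=(\text{expression in }c_0,\dots,c_{k-1})$ forces $\Im c_k=0$; hence $K_k$ is real and $c_k=0$ is solved by a real $a_k$. Thus $\mathbf a$ may be taken with real coefficients, $S_{\overline a}T_a=1+\mathcal{O}(e^{-1/Ch})$, and $T_a S_{\overline a}=T_a T_a^\ast$ is manifestly formally self-adjoint.

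The only real difficulty is the analytic bookkeeping: checking that the recursively defined $a_k$ stay holomorphic on a fixed complex neighbourhood and obey factorial bounds $|a_k|\le C^k k!$, so that $\mathbf a$ is a genuine analytic symbol and all realization errors are truly $\mathcal{O}(e^{-1/Ch})$; and, in the self-adjoint case, keeping track of the imaginary parts imposed by formal self-adjointness so that the symbol equation stays solvable within real symbols. Everything else is formal symbol algebra together with the mapping and composition statements already established.
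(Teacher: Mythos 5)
Your proposal is correct, and at bottom it runs on the same engine as the paper's proof: reduce via Proposition \ref{prop:pseudos-downstairs} to the elliptic analytic pseudo $S_1T_1=\Op(c)$, $c=\sigma+\mathcal{O}(h)$, and then use ellipticity. The difference is in how the correction is implemented. The paper inverts $\Op(c)$ once and for all as a pseudodifferential parametrix $\Op(d)$ and sets $S_b=S_1$, $T_a=T_1\Op(d)$ (using implicitly that $T_1\Op(d)$ is again of the form $T_a$); you instead unroll that inversion into a term-by-term recursion on the amplitude coefficients $a_k$, which is equivalent and avoids having to identify $T_1\Op(d)$ as an FBI transform, at the cost of the (standard, but worth stating) point that the stationary-phase equations only determine $a_k(\alpha,x)$ on the critical set $x=\alpha_x$, so one must fix an extension before the next step of the recursion. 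In the self-adjoint case the two routes diverge more visibly: the paper uses that $\Op(c)=T_1^\ast T_1$ is a positive self-adjoint elliptic pseudo and takes its (inverse) square root $\Op(d)$, setting $T_a=T_1\Op(d)$, $S_b=T_a^\ast$; you keep $b=\overline{a}$ and run a real-coefficient recursion, where the solvability over the reals is exactly the observation that formal self-adjointness of $T_{a'}^\ast T_{a'}$ for the truncated amplitude forces $K_k\in\R$. Your hand-made reality argument is what the square-root trick packages automatically, while conversely you avoid constructing the square root of a positive elliptic analytic pseudo. Both versions ultimately rest on the same analytic-symbol bookkeeping ($|a_k|\leq C^kk!$ via majorants), which you correctly flag as the only genuinely delicate point; the remaining steps (the Fourier inversion identity for $\Op(1)$ on $\mathcal{C}(A,B)$, the mapping properties of $S_b$ and $T_a$ used to deduce $(T_aS_b)^2=T_aS_b$) are used exactly as in the paper.
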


\begin{proof}
We observe that
\[
S_1 T_1 = \Op(c) + \mathcal{O}(e^{-1/Ch}),
\]
where $c=\sigma+\mathcal{O}(h)$ is an elliptic symbol, so that we can invert it (modulo exponential errors) in the form $\Op(d)$, and let 
\[
S_b = S_1,\quad T_a = T_1 \Op(d). 
\]
In the case that the phase is self-adjoint, there is a better thing to do. Indeed, then $S_1 = T_1^\ast$, and $\Op(c)$ is self adjoint and positive, so that we can find $d$ with $\Op(d)$ formally positive self-adjoint, and $\Op(c) = \Op(d)^2 + \mathcal{O}(e^{-1/Ch})$. Then we let 
\[
T_a = T_1 \Op(d),\quad S_b = (T_a)^\ast = \Op(d) S_1. 
\]
\end{proof}

Now, this construction provides a local projector, but it is not quite clear how one would patch up local projectors to get a global one. We will now improve this argument and see that there are many global (on $U$) projectors. Let us define a class of functions on which we can act without problems: covering $U$ by a union of $A\times B$ (each with its own Darboux coordinates) we let $\mathcal{C}(U)$ be the set of functions on $U$ that are exponentially small everywhere (locally uniformly), except in some $A\times B$, where they belong to $\mathcal{C}(A\times B)$ as defined before. Now
\begin{proposition}\label{prop:construction-many-projectors}
Let $p(\alpha,\beta)$ be an analytic symbol defined in a neighbourhood of the diagonal of $U$ (continued by zero elsewhere), non-vanishing on the diagonal (i.e elliptic). Then let $P$ be the operator with kernel $e^{i\phi(\alpha,\beta)/h} p(\alpha,\beta)$. There exist elliptic analytic symbols $b(\alpha,\beta)$ and $\mathfrak{p}(\alpha)$ such that the operator $\Pi$ with kernel $e^{i\phi/h}b$ satisfies (when acting on $\mathcal{C}(U)$ or on $L^2(U)$)
\begin{align*}
P &= \Pi \mathfrak{p} \Pi + \mathcal{O}(e^{-1/Ch}). \\
				\Pi ^2 & = \Pi + \mathcal{O}(e^{-1/Ch}). 
\end{align*}
\end{proposition}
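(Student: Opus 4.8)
The plan is to develop a symbolic calculus for the operators in play and then construct $\Pi$ and $\mathfrak p$ by successive approximation, resumming analytically at the end. Write $K_a$ for the operator with kernel $e^{\frac ih\phi(\alpha,\beta)}a(h;\alpha,\beta)$, $a$ an analytic symbol supported near the diagonal; so $P=K_p$, and we seek $\Pi=K_b$. The first point is that, modulo $\mathcal O(e^{-1/Ch})$, the $K_a$'s are closed under composition: holomorphic stationary phase in the middle variable of $K_aK_b$ has critical point $\gamma_c(\alpha,\beta)$ and, by the reproducing property \eqref{eq:reproducing}, critical value $\phi(\alpha,\beta)$, so $K_aK_b=K_{a\star b}$ with $a\star b$ given by the stationary-phase expansion; this product is associative by \eqref{eq:associativity}. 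The diagonal symbol $\rho(K_a):=a(h;\alpha,\alpha)$ satisfies $\rho(K_aK_b)=\sigma\,\rho(K_a)\rho(K_b)+\mathcal O(h)$, where $\sigma$ is a fixed nowhere-vanishing function --- essentially an inverse square root of the determinant of $\partial_\gamma^2(\phi(\alpha,\gamma)+\phi(\gamma,\alpha))$ at $\gamma=\alpha$, of the same nature as the factor in Proposition \ref{prop:pseudos-downstairs}. The same computation shows that $K_aM_{\mathfrak q}$ (composition with multiplication by $\mathfrak q$) is again a $K$-operator, of symbol $a(\alpha,\beta)\mathfrak q(\beta)$, whence $\rho(K_aM_{\mathfrak q}K_b)=\sigma\,\rho(K_a)\rho(K_b)\mathfrak q+\mathcal O(h)$. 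Ellipticity of $\rho$ is preserved by all of this. Finally, one records a Toeplitz surjectivity statement: if $\Pi=K_b$ is idempotent with $\rho(\Pi)$ elliptic and $Q$ is any $K$-operator with $\Pi Q\Pi=Q+\mathcal O(e^{-1/Ch})$, then there is an analytic symbol $\mathfrak p(\alpha)$ with $\Pi M_{\mathfrak p}\Pi=Q+\mathcal O(e^{-1/Ch})$, built order by order --- $\mathfrak p_0$ is a nonvanishing multiple of $\rho(Q)$, and at each step the residual $Q-\Pi M_{\mathfrak p_{\le k}}\Pi$ is again $\Pi$-sandwiched (using $\Pi^3=\Pi+\mathcal O(e^{-1/Ch})$) and one order of $h$ smaller, so the process closes up, the relevant leading map being $\mathfrak p_k\mapsto\sigma\rho(\Pi)^2\mathfrak p_k$.

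Given this, the problem reduces to producing an idempotent $\Pi=K_b$ with $\rho(\Pi)$ elliptic such that, in addition, the given elliptic $P=K_p$ satisfies $\Pi P\Pi=P+\mathcal O(e^{-1/Ch})$; then the Toeplitz statement applied to $Q=P$ yields $\mathfrak p$, elliptic because $\rho(\mathfrak p)$ is a nonvanishing multiple of $\rho(P)$. (Such compatibility is necessary: $P=\Pi\mathfrak p\Pi$ forces $\Pi P\Pi=\Pi^3\mathfrak p\Pi^3=P$ modulo $\mathcal O(e^{-1/Ch})$.) I would build $\Pi$ by successive approximation starting from $\Pi_0:=K_{\sigma^{-1}}$, which is elliptic and satisfies $\Pi_0^2=\Pi_0+\mathcal O(h)$ and $\Pi_0P\Pi_0=P+\mathcal O(h)$ since $\rho(\Pi_0P\Pi_0)=\rho(P)$. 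Writing $\Pi=\Pi_0+\sum_{k\ge1}h^kC_k$ with $C_k=K_{b_k}$, the two requirements become at each order $k$ the pair of linear equations
\[
\Pi_0C_k+C_k\Pi_0-C_k=E_k,\qquad \Pi_0PC_k+C_kP\Pi_0=E_k',
\]
with $E_k,E_k'$ explicit in $b_0,\dots,b_{k-1},p,\phi$. Decomposing $C_k$ into its four blocks relative to $\Pi_0$ (each is again a $K$-operator, as $1-\Pi_0$ enters only through combinations like $C_k-\Pi_0C_k$), the first map acts at leading order as $+1$ on the $\Pi_0(\cdot)\Pi_0$ block, $-1$ on the $(1-\Pi_0)(\cdot)(1-\Pi_0)$ block, and $0$ on the cross blocks; the second map has no $(1-\Pi_0)(\cdot)(1-\Pi_0)$ component. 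The systems are compatible because $E_k$ turns out to have no cross-block component, and $E_k'$ no $(1-\Pi_0)(\cdot)(1-\Pi_0)$ component --- this last being the crucial point, a consequence of the identity $\rho(\Pi_0P)=\rho(P\Pi_0)=\rho(\Pi_0P\Pi_0)=\rho(P)$, which makes $(1-\Pi_0)K_p(1-\Pi_0)$ have vanishing principal part. One then reads off the $(1-\Pi_0)(\cdot)(1-\Pi_0)$ block of $C_k$ from $E_k$, and solves for the remaining blocks using $E_k,E_k'$, inverting the elliptic factors with the help of the ellipticity of $P$ (equivalently, invertibility of $\Pi_0P\Pi_0$ within the calculus).

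Finally, one turns the formal symbol $\sum_kh^kb_k$ into a genuine analytic symbol: its coefficients are polynomial in finitely many derivatives of $b_0,\dots,b_{k-1},p,\phi$ evaluated inside the domains where these obey uniform analytic-symbol estimates, hence satisfy Cauchy bounds $|b_k|\le C^kk!$, and a Borel-type resummation in the analytic category as in \cite{RSV-18} produces a realization $b$, elliptic since its leading term is $\sigma^{-1}$, with $K_b^2=K_b+\mathcal O(e^{-1/Ch})$ and $K_bPK_b=P+\mathcal O(e^{-1/Ch})$. Setting $\Pi:=K_b$ and invoking the Toeplitz statement finishes the proof; note that the entire construction is carried out globally on $U$, with no assumption of simple-connectedness, so one really obtains global projectors. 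Continuity of all the operators involved on $\mathcal C(U)$, and on $L^2(U)$ (by Schur's test, the kernels being Gaussian-localized near the diagonal by $\Im\phi\geq|\alpha-\beta|^2/C$), follows from the mapping properties already established for the building blocks, glued over a cover of $U$ by Darboux charts $A\times B$. The main obstacle is the successive-approximation step --- in particular verifying that the potential obstruction in the $(1-\Pi_0)(\cdot)(1-\Pi_0)$ block vanishes (because $(1-\Pi_0)K_p(1-\Pi_0)$ has no principal symbol), so that idempotency and $P$-compatibility can be enforced \emph{simultaneously} at every order --- together with controlling the resummation so that the remainders are genuinely $\mathcal O(e^{-1/Ch})$ and not merely $\mathcal O(h^\infty)$.
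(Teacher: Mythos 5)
Your overall architecture --- a symbolic calculus for kernels $e^{\frac ih\phi}a$ closed under composition via the reproducing property, an order-by-order construction of $\Pi$ enforcing idempotency and $\Pi P\Pi=P$ simultaneously, and an order-by-order Toeplitz surjectivity statement --- is a legitimate alternative to the paper's route \emph{at the level of formal symbols}; the block decomposition and the vanishing of the $(1-\Pi_0)(\cdot)(1-\Pi_0)$ obstruction are fine, and this is essentially the $C^\infty$/formal theory that the paper also relies on for uniqueness. But there is a genuine gap exactly at the point you defer to your last sentence: the claim that the recursively defined coefficients $b_k$ (and the $\mathfrak p_k$) ``hence satisfy Cauchy bounds $|b_k|\le C^kk!$'' is unsubstantiated, and it is the whole difficulty of the analytic category. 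Each $b_k$ is produced from derivatives of \emph{all} the previous $b_j$'s through iterated stationary-phase expansions and inversions of elliptic factors; a naive estimate of such a recursion does not close up to uniform $C^{k+1}k!$ bounds on a fixed complex neighbourhood (one loses holomorphy domain or gains uncontrolled constants at each step). This is precisely why recovering analytic regularity of such symbols required the separate, careful transport-equation analyses of \cite{Deleporte-18} and \cite{Charles-19}, a point the paper makes explicitly. Once one \emph{knows} the formal symbol is analytic, the resummation into a realization is the easy part; invoking a ``Borel-type resummation'' does not supply the estimate you have not proved.

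The paper circumvents this by never estimating the recursion directly. Locally it manufactures an honest analytic solution as $T_mS_g$ with $T_f=PT_1$, $S_g=S_1P$, $T_m=T_f(S_gT_f)^{-1}$ --- compositions of explicitly analytic FIOs and elliptic pseudors, for which holomorphic stationary phase delivers the analytic symbol estimates for free --- and then uses the \emph{uniqueness} of the formal solution of the system $\Op(\zeta_j)C=C\Op(\zeta_j^\ast)=0$, $C^2=C$ to conclude that the globally defined formal symbol coincides locally with an analytic one, hence is an analytic formal symbol admitting a global realization. The same local-analytic-existence-plus-formal-uniqueness device, via the invertible FIO $\Sigma$ of Lemma \ref{lemma:inverse-FIO}, produces $\mathfrak p$. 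To repair your argument you must either carry out Deleporte--Charles type estimates on your recursion, or graft in such a local analytic construction and use your formal uniqueness to upgrade; as written, the conclusion with $\mathcal O(e^{-1/Ch})$ (rather than $\mathcal O(h^\infty)$) remainders is not established.
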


\begin{proof}
We start by revisiting the local argument presented above. In a sufficiently small open patch $A\times B\subset U$ as in the previous section, we can find local FBI transforms $T$, $S$. Let
\[
T_f = P T_1,\quad S_g = S_1 P. 
\]
Then
\[
S_g T_f = Q
\]
is an elliptic pseudo, that we can invert, taking $T_m = T_f Q^{-1}$. Then we find (on $\mathcal{C}(A\times B)$)
\begin{equation}\label{eq:local-projector-C(AxB)}
(T_m S_g)^2 = T_m S_g T_m S_g = T_m Q Q^{-1} S_g = T_m S_g + \mathcal{O}(e^{-1/Ch}). 
\end{equation}
The operator $T_m S_g$ has kernel of the form $e^{i\phi/h}b_0$, with $b_0$ an analytic elliptic symbol defined on $(A\times B)^2$. However, we can use Proposition \ref{prop:FromC(AxB)toKernel} to use the a priori regularity and form of the kernels of $T_m S_g$ and $(T_m S_g)^2$ to prove that \eqref{eq:local-projector-C(AxB)} holds on $L^2(A\times B)$. 

Next, we recall the classic construction dating back to \cite{Boutet-Guillemin} (see also \cite{Sjostrand-96-convex-obstacle}). At least locally, since $\mathcal{J}_\phi$ is an involutive manifold, we can find (see Corollary \ref{cor:local-annihilation}) analytic functions $\zeta_1^0,\dots, \zeta_n^0$ whose common zero set is exactly $\mathcal{J}_\phi$, whose differentials are independent on $\mathcal{J}_\phi$, and with
\[
\{\zeta_i^0,\zeta_j^0\} = 0. 
\]
With a BKW analysis, one can then construct full semi-classical symbols\footnote{The construction provides them in the form $\zeta_j = \zeta_j^0 + h \zeta_j^1 +\dots$, where each term is analytic (at least near $\Sigma_\R$), but it is not clear that the growth of $\zeta_j^k$ as $k$ becomes large is that of a formal analytic symbol. A good part of the problem is that there are many choices to be made, since what is really well defined is the ideal generated by the $\zeta_j$'s.} $\zeta_1,\dots, \zeta_n$ so that
\[
[\Op(\zeta_i), \Op(\zeta_j)] =  \mathcal{O}(h^\infty), 
\]
and
\[
\Op(\zeta_j) P = \mathcal{O}(h^\infty). 
\]
A similar construction of $\zeta_j^\ast$ can be done, so that
\[
P \Op(\zeta_j^\ast) = \mathcal{O}(h^\infty).
\]
Next, one observes that if an operator $C$ whose kernel is of the form $e^{i\psi/h}c$, with $c = c_0 + h c_1 + \dots$, satisfies $\Op(\zeta_j) C,C\Op(\zeta_j^\ast) = \mathcal{O}(h^\infty)$ for $j=1\dots n$, then the jets of the $c_k$'s at the diagonal satisfy transport equations transverse to the diagonal, so that the jet of $c$ is determined by its value on the diagonal.

Then, if one also requires that $C^2 = C + \mathcal{O}(h^\infty)$, this gives additional relations on the $c_k$'s on the diagonal, of the form
\[
c_0 = 1,\quad c_k = F(c_0,\dots, c_{k-1}),\ k\geq 1.
\]
In particular, if we see this set of equations as a set of equations on the total jet of $c$ at the diagonal, there exists exactly one solution to this equation. One deduces that there exists near the diagonal at most one formal \emph{analytic} symbol $\sum h^k c_k$ so that for a realization $c$, 
\[
\Op(\zeta_j) C = \mathcal{O}(h^\infty),\quad C \Op(\zeta_j^\ast) = \mathcal{O}(h^\infty),\quad C^2= C+\mathcal{O}(h^\infty). 
\]
By construction, the arguments above can be localized, so that existence and uniqueness is also local. The fact that $T_m S_g$ satisfy these equations locally imply that the unique solution in the space of jets is the jet of a formal analytic symbol, that we denote by $\mathbf{b}$. The amplitude $b_0$ of $T_m S_g$ is a local realization of $\mathbf{b}$. We can then take instead a global realization $b$, and this provides us with the announced projector $\Pi$. 

To finish the proof, we have to prove that $P$ writes as a Tœplitz. For this, the method originated in \cite{Boutet-Guillemin} is to use the $\Op(\zeta_j)$ from before. Taking a symbol $\mathfrak{s}$, one forms the operator $\Pi \mathfrak{s} \Pi$, with kernel $e^{i\phi/h}\sigma$ and observe that one can read the values of $\mathfrak{s}$ in the restriction of $\sigma$ to the diagonal, using a stationary phase expansion. From this one can build a formal symbol $\mathbf{p}$ such that for any $C^\infty$ realization $\mathfrak{p}_0$ of $\mathbf{p}$, one has
\[
P - \Pi \mathfrak{p}_0 \Pi = \mathcal{O}(h^\infty). 
\]
This is presented in details in \cite[\S 2.3.2]{GBJ}. The strength of the method is that actually $\mathbf{p}$ is uniquely determined\footnote{Here we use crucially that all the symbols are classical, i.e have expansions in powers of $h$, with coefficients not depending on $h$.}. However, it does not provide directly \emph{analytic} symbol estimates. To obtain such estimates, one strategy is to study more carefully the transport equations as in \cite{Deleporte-18}, \cite{Charles-19}, and observe that one can indeed recover analytic regularity. We will take another perspective altogether. We will prove that around any point of $U$, one can find an analytic symbol solving the problem. By local uniqueness, this will ensure that $\mathbf{p}$ is actually an analytic symbol on $U$. 

Let $\alpha_0\in U$, and $A\times B\subset U$ a small neighbourhood of $\alpha_0$. Observe that for $\mathfrak{a}$ an analytic symbol, with local FBI transforms $T_1$, $S_1$,
\[
S_1 \Pi \mathfrak{a} \Pi T_1 = \Op(c)
\]
defines an analytic symbol $c$. Let us describe the nature of the operator $\Sigma:\mathfrak{a} \mapsto c$. In the classical Bergman-Segal case, this is the heat flow taken at a time proportional to $h$. In Remarque 4.6, \cite{Singularite-analytique-microlocale}, Sj\"ostrand considered this case as an application of his Théorème 4.5. We can generalize this:
\begin{lemma}\label{lemma:inverse-FIO}
The map $\Sigma$ is invertible in the class of analytic symbols. 
\end{lemma}
The inverse is obtained as an FIO with complex phase, whose phase is only positive along a \emph{good contour} to use the terminology of \cite{Singularite-analytique-microlocale}. 

\begin{proof}
For this, we come back to the argument in the proof of Proposition \ref{prop:pseudos-downstairs}. Recall that we wrote $S_b T_a$ as $\Op(c)$ with an integral formula. Since $\Pi T_1$ and $S_1 \Pi$ are of the form $T_a$, $S_b$, a similar formula holds. We obtain that up to exponentially small errors, and as long as $(x,\eta)$ does not approach the boundary of $A\times B$,
\[
c(x,\eta) = \int_{A\times B} \int_A e^{\frac{i}{h}\Psi(x,\eta,\alpha,y)} b(x,\alpha)a(\alpha,y)\mathfrak{a}(\alpha) dy d\alpha. 
\]
However, we will not deal with this integral in the same way as before. Indeed, now we notice that provided $(x,\eta)$ remain in a very small $A''\times B''$ contained in a small $A'\times B'$ itself contained in $A\times B$, we can change the domain of integration without losing more than exponential errors (using the same arguments as in the proof of Proposition \ref{prop:pseudos-downstairs}). Hence up to exponentially small errors,
\[
c(x,\eta) \simeq \int_{A'\times B'}\int_A e^{\frac{i}{h}\Psi(x,\eta,\alpha,y)} b(x,\alpha)a(\alpha,y)\mathfrak{a}(\alpha) dy d\alpha. 
\]
Let us be more precise on the sense of ``small'' here: these smaller and smaller sets are balls of increasingly small radius, all centered at $\alpha_0$. 

When $\alpha=(x,\eta)$, $\Psi$ is non-degenerate stationary in $y$ at $y=x$. Since now $(x,\eta,\alpha)$ are restrained to a small domain, we can apply a stationary phase argument in $y$, to deduce that for $(x,\eta)\in A''\times B''$, 
\[
c(x,\eta)=  \underset{:= \Sigma_0(c)}\int_{A'\times B'} e^{\frac{i}{h}\Phi(x,\eta;\alpha)}q(x,\eta;\alpha)\mathfrak{a}(\alpha) d\alpha + \mathcal{O}(e^{-1/Ch}),
\]
for a new analytic phase $\Phi$ and some analytic symbol $q$. The phase $\Phi$ is critical at $(x,\eta)=\alpha$, and has non-negative imaginary part, strictly positive on the boundary of $A'\times B'$. Using the fact that $\Im \Psi\geq 0$ and $\Im \partial_y^2 \Psi >0$ at the critical point, we deduce using the fundamental lemma \cite[Lemme 3.2]{Singularite-analytique-microlocale} or \cite[Lemma 2.1]{Melin-Sjostrand-75} that
\begin{equation}\label{eq:non-degenerate-Phi}
\frac{\partial^2\Phi}{\partial(x,\eta)\partial \alpha} \text{ is invertible at $(\alpha,\alpha)$.}
\end{equation}
In particular it is legitimate to call $\Sigma$ an elliptic FIO, associated with the local complex symplectomorphism
\[
\kappa : (\alpha; -\partial_\alpha \Phi) \mapsto (x,\eta; \partial_x \Phi, \partial_\eta \Phi).
\]

According to Théorème 4.5 from \cite{Singularite-analytique-microlocale}, we can invert $\Sigma$ in the class of complex phase FIOs. More precisely, since $\Im \Phi \geq 0$, there exists $\Xi$ a complex phase FIO such that $\Xi \Sigma = \Sigma\Xi=1$ in $H_{0,\alpha_0}$. Here
\begin{itemize}
	\item $H_{0,\alpha_0}$ is the space of germs of holomorphic functions near $\alpha_0$ with subexponential growth in $h$, modulo exponentially small remainders. This corresponds with the space of germs formal analytic symbols at $\alpha_0$.
	\item There exists a contour $\Gamma$ passing through $\alpha_0$ and an analytic symbol $m$ such that $\Xi$ is given by
	\[
	\Xi c (\alpha) = \int_{\Gamma} e^{-\frac{i}{h}} m(x,\eta,\alpha) c(x,\eta)dxd\eta. 
	\]
	When $c$ is only defined on a ball $B_\Gamma$ near $\alpha_0$ in $\Gamma$, we replace the integration over $\Gamma$ by integration over $B_\Gamma$. It only produces an exponentially small uncertainty on the definition of $\Xi c$ which is not important because we are working in the space $H_{0,\alpha_0}$.
\end{itemize}
\end{proof}

Equipped with the pseudo-inverse for $\Sigma$, we come back to our problem. Let us write $S_1 P T_1 = \Op(c)$, with $c$ a fixed analytic symbol on $A\times B$. In any sufficiently small sets $A'\times B'$, we can build a pseudo-inverse ${\Xi}$ to $\Sigma$ on $\mathcal{C}(A'\times B', B(0,\epsilon))$ using Lemma \ref{lemma:inverse-FIO}. We determine thus $\mathfrak{p}_0$ a symbol on $A'\times B'$ such that acting on $\mathcal{C}(A',B')$
\[
S_1 P T_1 = S_1\Pi \mathfrak{p}_0 \Pi T_1 + \mathcal{O}(e^{-1/Ch}). 
\]
i.e
\[
S_1( P- \Pi \mathfrak{p}_0 \Pi) T_1 = \mathcal{O}(e^{-1/Ch}). 
\]
Now, $P-\Pi\mathfrak{p}\Pi$ has kernel in the form $e^{i\phi/h}w$, and the operator in the LHS above is $\Op(q)$. The relation implies that $w+\mathcal{O}(h)$ on the diagonal equals $q$. Additionally, we have seen that $w$ satisfies some transport equations, and we deduce that $w= \mathcal{O}(h)$. We can reiterate this argument and deduce that the formal symbol associated with $w$ vanishes completely, i.e in $\mathcal{C}(A'\times B')$,
\[
P = \Pi \mathfrak{p}_0 \Pi  + \mathcal{O}(e^{-1/Ch}).
\]
This argument proves that for any $\alpha_0\in U$, there exists a complex neighbourhood $\alpha_0\in A'\times B'$ so that the formal symbol $\mathbf{p}$ is an analytic formal symbol in $A'\times B'$. It must thus be an analytic formal symbol on the whole $U$. One may thus pick a global realization, and end the proof. 
\end{proof}

\section{Local conjugation}

\subsection{Two digressions}

In this section, we discuss some related topics and some points that arise but are not directly related to the rest of the paper.

\subsubsection{True projectors: line bundles}

So far we have only discussed projectors modulo an exponential error. Let us discuss how one can obtain a true projector. Our arguments were local, so we need to introduce the global structure of the set $U$. There are three main situation that come to mind
\begin{enumerate}
	\item $U\subset \R^{2n}$ is an open set with smooth boundary, say bounded.
	\item $U = T^\ast M$ is a cotangent bundle of a compact manifold. 
	\item $U$ is a (closed) symplectic manifold. 
\end{enumerate}

From the existence of the phase function, we deduce that on $U$, the symplectic form is exact. This would be a strong condition in the case of $U$ being compact symplectic without boundary. However, to understand actual projectors in case (1), we would need to describe boundary conditions, and in case (2) we would need to introduce symbol classes to deal with $\xi\to\infty$ in $T^\ast M$. Both situations lead to potentially complicated questions, that are (much) beyond the scope of this article. 

Let us thus discuss a slight extension of our results, to deal with the case of positive line bundles. At the start of \S \ref{sec:study-phase}, we observed that one can always add to $\phi$ a coboundary of the form $f(\alpha)-f(\beta)$, with $f$ a real valued function, and not change the essential properties of $\phi$. Let us consider now $X$ a compact manifold, covered by open sets $U_j$, so that each $U_j$ is endowed with an analytic projector phase $\phi_j$, and so that on $U_j\cap U_k$, a real valued, real analytic function $f_{jk}$ is defined satisfying
\[
\phi_j(\alpha,\beta) = \phi_k(\alpha,\beta) + f_{jk}(\alpha)- f_{jk}(\beta). 
\]
Then the symplectic form $\omega|_{U_j}=\omega_j$ defined by $\phi_j$ coincides with the one defined by $\phi_k$, so that $(X,\omega)$ is a symplectic manifold. Denote $\lambda_j = d_\alpha {\phi_j}|_{\alpha=\beta}$. The relation given on $\phi_j$ and $\phi_k$ suggests that instead of having an operator acting on functions on $X$, we should have an operator acting on sections of a complex line bundle. More precisely, if $L\to X$ is a line bundle, and we have a section $\sigma$ of $L$ supported in $U_j\cap U_k$, the sections
\[
\mu_{j,k} = \int_{U_{j,k}} e^{\frac{i}{h} \phi_{j,k}(\alpha,\beta)} \sigma_{j,k}(\beta) d\beta,
\]
define the same section in $U_j\cap U_k$ if the transition function from $U_j$ to $U_k$ is $e^{\frac{i}{h} f_{jk}}$, i.e
\[
\sigma_j = e^{\frac{i}{h}f_{jk}}\sigma_k. 
\]
Since these transition function have modulus $1$, $L$ should be a hermitian complex line bundle. Our data also enable us to define a connection, by setting that in each $U_j$,
\[
i h \nabla 1 = \lambda_j. 
\]
The curvature of this connection is 
\[
\Omega = \frac{1}{ih}d\lambda_j = \frac{\omega}{ih},
\]
so that the first Chern class is
\[
-\frac{1}{h}[\omega]\in H^2(X,\R). 
\]
According to classical results, we thus must have $[\omega] \in h H^2(X,\Z)$. Conversely, provided $[\omega]\in h H^2(X,\Z)$, Weil's Theorem ensures that we can build exactly one such line bundle $L=L_{h}$, up to isomorphism\footnote{If this quantization condition is satisfied for some $h_0$, with bundle $L=L_{h_0}$, then it is also satisfied for $h = h_0/k$, $k\in \N^\ast$, with bundle $L_h = L^{\otimes k}$.}. In that case, we can define $e^{i\phi/h}$ as a section of 
\[
L \boxtimes L^\ast,
\]
defined in a neighbourhood of the diagonal. It then makes sense to consider operators on sections of $L$ whose kernel takes the form
\[
e^{\frac{i}{h}\phi} a, 
\]
with $a$ a function on $X\times X$ supported near the diagonal. These algebraic precautions taken, the arguments from \S \ref{sec:projector} apply, and we can find (many) operators $\Pi_0$ whose kernel takes the form above, satisfying $\Pi_0^2 = \Pi_0 + \mathcal{O}(e^{-1/Ch})$. The kernel of the operator $\Pi_0$ is analytic near the diagonal, and exponentially small away from the diagonal, so we can solve a $\overline{\partial}$ problem to find another operator $\Pi_1$, differing from $\Pi_0$ by a $\mathcal{O}(e^{-1/Ch})$ kernel, whose kernel is analytic on $X\times X$. When $h$ is small, the relation $\Pi_1^2 = \Pi_1 + \mathcal{O}(e^{-1/Ch})$ ensures that the spectrum of $\Pi_1$ is contained in two small disks centered at $0$ and $1$ respectively. For $z$ not too close to $0$ or $1$, one finds that 
\[
(z-\Pi_1)\left(\frac{1}{z} + \frac{1}{z(z-1)}\Pi_1 \right) = 1 + \frac{1}{z(z-1)} ( \Pi_1 - (\Pi_1)^2). 
\]
The right hand side can be inverted using Neumann series, and we find that this inverse is of the form $1+ \mathcal{O}(e^{-c/h})$, with the error having analytic kernel (because it is analytically smoothing). Using some spectral theory, it is then possible to finally find a true projector $\Pi$, whose kernel is analytic, and exponentially close to that of $\Pi_1$ and thus to $\Pi_0$.

\subsubsection{From pseudors to Tœplitz}

It is a well known fact that the FBI transform relates Tœplitz operators --- of the form $\Pi a \Pi$ --- with pseudodifferential operators. More precisely, given a FBI transform $T$ and its adjoint $S$, if $a$ is a ($C^\infty$) symbol, there exist $b$ and $c$ other such symbols such that
\[
S (\Pi a \Pi) T = \Op(b) + \mathcal{O}(h^\infty),\quad \Op(a) = S \Pi c \Pi T + \mathcal{O}(h^\infty).
\]
This is not very precise, and one would have to specify the context ($\R^n$ or a manifold ?), symbol classes and which FBI transform we are dealing with exactly to obtain a formal statement. Multiple such instances exist in the litterature. However, in the analytic case, while certainly the construction of $b$ is written down in several places, we could not locate the construction of $c$, that was the object of the end of \S \ref{sec:projector}. We record it here
\begin{theorem}
Let $a$ be a analytic symbol on $A\times B$ and $T$, $S$ a pair of dual FBI transforms, and $\Pi$ a projector as in \S \ref{sec:projector}. Then there exists $c$ an analytic symbol such that acting on $\mathcal{C}(A, B)$,
\begin{equation}\label{eq:toeplitz-to-Op}
\Op(a) = S \Pi c \Pi T + \mathcal{O}(e^{-1/Ch}). 
\end{equation}
\end{theorem}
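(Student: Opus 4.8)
The plan is to run the same strategy as at the end of \S\ref{sec:projector}, but with the roles of pseudodifferential operators and Tœplitz operators exchanged, and to use Lemma \ref{lemma:inverse-FIO} in the opposite direction. First I would form, for an analytic symbol $\mathfrak{c}$ on $A\times B$, the operator $\Op(\mathfrak{c})$ and conjugate it by the FBI transforms the other way around: set
\[
\Sigma' : \mathfrak{c} \mapsto d, \qquad T \Op(\mathfrak{c}) S = \Pi \, d \, \Pi + \mathcal{O}(e^{-1/Ch}),
\]
where the right-hand side is an operator with kernel of the form $e^{i\phi/h}\sigma$ and $d$ is read off from $\sigma$ on the diagonal by a stationary phase expansion, exactly as in the construction of $\mathbf{p}$ in Proposition \ref{prop:construction-many-projectors}. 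The point is that $T \Op(\mathfrak{c}) S$, having kernel of Tœplitz type, is again determined near the diagonal by its restriction to the diagonal because of the transport equations governed by the $\Op(\zeta_j)$, $\Op(\zeta_j^\ast)$; so $\Sigma'$ is a well-defined map on (germs of) analytic symbols, and at leading order $d = \mathfrak{c}\,\sigma' + \mathcal{O}(h)$ for an elliptic analytic factor $\sigma'$ built from $\psi,\psi^\ast$ as in Proposition \ref{prop:pseudos-downstairs}. Then I would invert $\Sigma'$.

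To invert $\Sigma'$ I would argue just as for $\Sigma$ in Lemma \ref{lemma:inverse-FIO}: write $\Sigma'$ by an explicit oscillatory integral, restrict $(x,\xi)$ to increasingly small balls centered at $\alpha_0$ so that contour deformations cost only exponential errors, apply stationary phase in the auxiliary variables, and identify the result as an elliptic FIO with complex phase whose mixed Hessian is non-degenerate (by the fundamental lemma \cite[Lemme 3.2]{Singularite-analytique-microlocale}, using $\Im \geq 0$ and strict positivity of the relevant partial Hessian at the critical point). Théorème 4.5 of \cite{Singularite-analytique-microlocale} then furnishes a complex-phase FIO pseudo-inverse $\Xi'$ with $\Xi'\Sigma' = \Sigma'\Xi' = 1$ in the space $H_{0,\alpha_0}$ of germs of formal analytic symbols. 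Setting $\mathbf{c} = \Xi'(\mathbf{a}\,(\sigma')^{-1})$—more precisely, applying $\Xi'$ to a fixed realization and then picking a realization—produces, near $\alpha_0$, an analytic symbol $c$ with $T\Op(c)S = \Pi c_{\mathrm{can}} \Pi + \dots$ whose conjugate back down by $S(\cdot)T$ reproduces $\Op(a)$. Composing, $S\Pi c\Pi T = S T \Op(a) S T + \mathcal{O}(e^{-1/Ch})$, and one checks $STu = u + \mathcal{O}(e^{-1/Ch})$ on $\mathcal{C}(A,B)$ from Proposition \ref{prop:pseudos-downstairs} after possibly correcting $c$ by an elliptic pseudor, so \eqref{eq:toeplitz-to-Op} holds locally.

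Finally I would globalize by the local-uniqueness mechanism already used in the paper. The formal symbol $\mathbf{c}$ solving $\Op(a) = S\Pi c\Pi T + \mathcal{O}(h^\infty)$ is uniquely determined (all symbols being classical), as in \cite[\S 2.3.2]{GBJ}; since each $\alpha_0\in A\times B$ admits a neighbourhood on which the above construction exhibits $\mathbf{c}$ as an analytic formal symbol, $\mathbf{c}$ is an analytic formal symbol on all of $A\times B$. Pick a global realization $c$; the difference $\Op(a) - S\Pi c\Pi T$ has, after applying $S(\cdot)T$ and using Proposition \ref{prop:FromC(AxB)toKernel} together with the transport equations, a formal symbol that vanishes near the diagonal, hence everywhere, so the error is $\mathcal{O}(e^{-1/Ch})$ on $\mathcal{C}(A,B)$.

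I expect the main obstacle to be the non-degeneracy \eqref{eq:non-degenerate-Phi}-type statement for $\Sigma'$: one must check that after the stationary-phase reduction the resulting complex phase $\Phi'$ really has invertible mixed Hessian at the diagonal—equivalently that $\Sigma'$ is a genuine elliptic FIO and not merely formally invertible. This is where the geometry of $\mathcal{J}_\phi$, $\mathcal{J}_\phi^\ast$ and their transversal intersection along $\Sigma$ (Proposition \ref{prop:involutive}) enters, exactly as the corresponding check for $\Sigma$ relied on the positivity of $\Lambda_\to$; one has to verify the dual positivity/transversality so that \cite[Lemme 3.2]{Singularite-analytique-microlocale} applies. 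The rest—contour deformations, stationary phase, and the local-to-global uniqueness argument—is routine given the machinery already set up in \S\ref{sec:projector}.
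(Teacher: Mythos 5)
Your overall strategy --- produce the Tœplitz symbol locally by inverting an elliptic FIO with complex phase via Théorème 4.5 of \cite{Singularite-analytique-microlocale}, then globalize using the $C^\infty$ uniqueness of the formal symbol --- is the paper's strategy, and your final globalization paragraph is correct. But the middle of your argument has the arrows pointing the wrong way, and this hides a circularity. The map that must be inverted is $\Sigma:\mathfrak{a}\mapsto c$ with $S\Pi\mathfrak{a}\Pi T=\Op(c)$ (Tœplitz symbol to pseudodifferential symbol); the theorem asks exactly for $\Sigma^{-1}(a)$, and Lemma \ref{lemma:inverse-FIO} already supplies a \emph{two-sided} inverse $\Xi$ of $\Sigma$ on germs of formal analytic symbols. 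The paper's proof is accordingly two lines: take $c=\Xi(a)$ locally, then invoke the $C^\infty$ uniqueness of the formal solution to conclude that the unique global formal symbol is analytic. There is no need to construct or analyze a second FIO.

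Your detour through $\Sigma':\mathfrak{c}\mapsto d$, $T\Op(\mathfrak{c})S=\Pi d\Pi$, is problematic on two counts. First, $\Sigma'$ is not given by an explicit oscillatory integral the way $\Sigma$ is: defining it requires representing the kernel $e^{i\phi/h}\sigma$ of $T\Op(\mathfrak{c})S$ in the form $\Pi d\Pi$ with $d$ an \emph{analytic} symbol, and the paper insists that reading $d$ off the diagonal by stationary phase ``does not provide directly analytic symbol estimates'' --- that is precisely the difficulty Lemma \ref{lemma:inverse-FIO} was introduced to circumvent, so your definition of $\Sigma'$ on analytic symbols presupposes the hard part. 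Second, the direction: since $ST=1$ on $\mathcal{C}(A,B)$ and $TS=\Pi$, the identity $S\Pi c\Pi T=\Op(a)$ is equivalent to $\Pi c\Pi=T\Op(a)S$, i.e. to $c=\Sigma'(a)$; you need to \emph{evaluate} $\Sigma'$, not invert it, and your formula $\mathbf{c}=\Xi'\bigl(\mathbf{a}(\sigma')^{-1}\bigr)$ outputs a pseudodifferential symbol where a Tœplitz symbol is required. Both issues disappear, and the non-degeneracy point you flag at the end is already settled by \eqref{eq:non-degenerate-Phi}, if you simply apply the existing $\Xi$ to $a$.
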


\begin{proof}
Recall from \S\ref{sec:projector} that the map $c\mapsto a$ is denoted $\Sigma$. Also recall that Théorème 4.5 \cite{Singularite-analytique-microlocale} provided us (Lemma \ref{lemma:inverse-FIO}) with a twosided inverse $\Xi$ to $\Sigma$ in the space of germs of formal analytic symbol.

Let us consider $a$ an analytic symbol. We can one the one hand use the $C^\infty$ theory to show that there exists locally and globally at most one formal symbol $\mathbf{c}$ solving \eqref{eq:toeplitz-to-Op}. On the other hand, we can use ${\Xi}$ to construct an analytic symbol solving locally the problem, proving that the unique global formal solution is actually \emph{analytic} formal.
\end{proof}

\subsection{Local total reduction and the Boutet-Guillemin construction}

We mentioned above that the usual construction of the $\zeta_j$ yield pseudors that are not analytic stricto sensu: they have a formal expansion $\sum h^k \zeta_j^k$ where each $\zeta_j^k$ is analytic, but the growth as $k\to+\infty$ is not controlled. The original construction is \cite{Boutet-Guillemin} is done in the $C^\infty$ context, where the convergence problem is not relevant. We will in this section present a way to in some sense upgrade \S A.5 of \cite{Boutet-Guillemin} to the analytic category (albeit only locally).

\subsubsection{A first construction} 

We present here an argument that was communicated to us by J. Sj\"ostrand. 

Replacing $T$ by $\Pi T$, we can always assume that $\Pi T = T$. Now, we try to build the $\zeta_j$'s as annihilators of $T$. We observe that since
\[
\psi(\alpha,x) = \langle \alpha_x- x, \alpha_\xi\rangle + \mathcal{O}(\alpha_x - x)^2, 
\]
then freezing the value of $\alpha_x$ gives an elliptic analytic FIO with positive phase:
\[
U_{\alpha_x}: f\mapsto  Tf(\alpha_x, \cdot). 
\]
Proceeding as in \S 4 of \cite{Singularite-analytique-microlocale} or \S \ref{sec:projector}, we can build a microlocal inverse $V$, and then observe that
\[
(\frac{h}{i}\partial_{\alpha_x} U_{\alpha_x})
\]
is also an analytic FIO, with same phase as $U_{\alpha_x}$, so that using $V$, we can find a vector-valued analytic pseudo $P_{\alpha_x}$ such that
\[
(\frac{h}{i}\partial_{\alpha_x} - P_{\alpha_x}) T f = \mathcal{O}(e^{-1/Ch}). 
\]
This suggests to let
\[
\zeta_j = \frac{h}{i}\partial_{(\alpha_x)_j} - (P_{\alpha_x})_j.
\]
However, this rough construction yields operator that do not have good commutation properties, because a priori, we only get
\[
\left[\zeta_j, \zeta_k \right] = \mathcal{O}(h).
\]
Unfortunately, we could not find a way to improve this (beyond $\mathcal{O}(h^\infty)$). The construction is also local, and we did not find a way to globalize it.

\subsubsection{Local total reduction: the general case}

To understand exactly the structure of the ideal of operators that vanish on our projectors, it is convenient to conjugate it to a standard projector, for which this ideal is particularly easy to describe. There are several different such model projectors, each one with its particular interest. 

There is one involutive manifold for which it is very clear what the $\zeta_j$ should be, and it is the flat one, given by
\[
\mathcal{J}=  \{ (x, 0, x', \xi')\ |\ x,x',\xi'\in\R^n\}. 
\]
Indeed, this suggest to use the operators
\[
\zeta_j = h\frac{\partial}{\partial x_j}.
\]
We also need another, transverse, involutive manifold, which will be now
\[
\mathcal{J}^\ast = \{ (0, \xi, x', \xi')\ |\ \xi, x', \xi' \in \R^n\},
\]
associated with the operators
\[
\zeta_j^\ast = x_j.
\]
Both these involutive manifolds intersect transversely at the symplectic manifold
\[
\Sigma = \{ (0,0,x',\xi') | x',\xi'\in\R^n\}.
\]
The projector $\Pi^{stan}$ associated with these manifolds and operators should satisfy that $\Pi^{stan} f$ is independent of $x$ for any $f$\footnote{Probably the operators considered in \cite{Charles-2022} corresponds to projecting on certain families of polynomials in $x$ instead of constant functions. The natural question would to wonder whether they can be conjugated to one of
\[
\Pi^{(m)}f(x,x') = \frac{1}{(2\pi h)^n} \int_{\R^{4n}} e^{-\frac{i}{h}\langle y,\xi\rangle} (1+ \dots + \frac{1}{m!}(\frac{i x\xi}{h})^m) f(y, x') dyd\xi. 
\]
One key point in \cite{Charles-2022} is to introduce some vector bundles to keep track of the principal symbol of such operators in a more convenient fashion than this singular $1+ \dots + \frac{1}{m!}(\frac{i x\xi}{h})^m$.}, and does not depend on anything else than the value of $f$ at $x=0$. It must be
\[
\Pi^{stan} f(x,x') = f(0, x').
\]
Let us rewrite this into FIO form, to understand what perturbations will be possible. Certainly, 
\[
\Pi^{stan} f (x,x') = \frac{1}{(2\pi h)^{2n}} \int_{\R^{4n}} e^{\frac{i}{h}(-\langle y,\xi\rangle + \langle x'-y',\xi'\rangle)} f(y, y') dy dy' d\xi d\xi'.
\]

If we are to conjugate locally any microlocal projector $\Pi$ as studied in Part \ref{part:1}, we will need to simultaneously transform $\mathcal{J}_\phi$ into $\mathcal{J}$ and $\mathcal{J}_\phi^\ast$ into $\mathcal{J}^\ast$. According to Lemma \ref{lemma:flattening-pair} it is possible, using a complex symplectomorphism $\kappa$.

Let now $\Pi_\phi$ be a microlocal projector with phase $\phi$ as in Section \ref{sec:projector}. Let $U$ be an analytic FIO quantizing the $\kappa$ from Lemma \ref{lemma:flattening-pair}, microlocally elliptic near $(\alpha,\vartheta_\alpha)$. Since $U$ is associated with a complex symplectomorphism that does not satisfy any type of positivity assumption, it is not an FIO whose phase has positive imaginary part. One must thus use good contour in its definition, in the spirit of Théorème 4.5 from \cite{Singularite-analytique-microlocale}; we will mostly gloss over these details as we have already recalled the main ideas in the proof of Lemma \ref{lemma:inverse-FIO}. The only relief we can offer to the reader is that since the symplectomorphism maps real points of $\Sigma_\phi$ to real points of $\Sigma$, we can take the phase real on $\Sigma$. 

Consider $\Pi' = U \Pi_\phi U^{-1}$. It must be associated with the relation
\[
(0,\xi,x',\xi') \mapsto (x, 0, x', \xi'). 
\]
In other words, we can write $\Pi'$ microlocally near $0$ as
\[
\frac{1}{(2\pi h)^{2n}} \int_{\Gamma} e^{\frac{i}{h}(-\langle y,\xi\rangle + \langle x'-y',\xi'\rangle)}a(x,y,\xi,x',y',\xi') f(y, y') dy dy' d\xi d\xi'.
\]
Observe that we are integrating over a good contour $\Gamma\subset \C^{4n}$, so that the boundary of $\Gamma$ produces exponentially small contributions. This contour is close to $\R^{4n}$ near $0$. In particular this formula makes sense for $f$ analytic, with a sufficiently large radius of convergence.

The amplitude here has too many variables, we can eliminate some of them. First, we can eliminate $y'$ because the operator behaves as a pseudo in the $x'$ variable. The behaviour of the operator should not depend too much on the behaviour of $f$ for large values of $y$, so we try expanding the amplitude in powers of $y$. Integrating by parts in $\xi$, we obtain that
\[
a=\sum \frac{1}{k!} y^k \partial_y^k a(x,0,\xi, x',\xi') \sim \tilde{a} =\sum \frac{(h/i)^k}{k!} \partial_\xi^k\partial_y^k a(x,0,\xi, x',\xi').
\]
Direct estimation shows that if $a$ is an analytic symbol, so is $\tilde{a}$, so that we can eliminate $y$ from the amplitude. The form of the operator is actually
\[
\Pi' \simeq\frac{1}{(2\pi h)^{2n}} \int_{\Gamma} e^{\frac{i}{h}(-\langle y,\xi\rangle + \langle x'-y',\xi'\rangle)}a(x,\xi,x',\xi') f(y, y') dy dy' d\xi d\xi',
\]
with $a$ an analytic symbol, elliptic near $0$.

So far we have only used that $\Pi'$ is an FIO. Using the fact that $\Pi'$ is a projector, we want to reduce further the operator. For this, let us study the action of pseudo-differential operators on $\Pi^{stan}$. We will use the local product decomposition. I.e we observe that $\Pi^{stan}$ writes as
\[
\Pi^{stan} = T S, 
\]
where, for $f$ a function of $x'$, 
\[
Tf (x,x') = f(x'), 
\]
and for $g$ a function of $(x,x')$, 
\[
Sg(x') = g(0,x').
\]

Now, for a symbol $a(x,\xi,x',\xi')$, we observe by direct computation that,
\begin{align*}
\Op(a) T &= \Op(a(x,0, x',\xi')) T,\\
S \Op(a) &= S \Op(a(0,\xi,x',\xi')),\\
 S\Op(a) T &= S\Op(a(0,0,x',\xi'))T.
\end{align*}
If we have a symbol $a(x',\xi')$, we can interpret it as a function of $(x,\xi,x',\xi')$, with trivial dependence on $(x,\xi)$. With that convention,
\begin{equation}\label{eq:commutation}
\Op(a(x',\xi')) T = T \Op(a(x',\xi')).
\end{equation}
(and likewise for $S$).

\begin{lemma}\label{lemma:local-projectors-around-Pi-stan}
Let $a,b$ be elliptic analytic symbols defined in a neighbourhood of $0$. Then the following are equivalent
\begin{enumerate}
	\item $\Op(a) \Pi^{stan} \Op(b)$ is a projector near $0$
	\item Near $0$, $\Op(b)^{-1} \Op(a) = \Op(c)$ with 
	\[
	c = 1 + \mathcal{O}( x, \xi).
	\]
	\item There exists an elliptic analytic symbol $\sigma$ such that
	\[
	\Op(a) \Pi^{stan} \Op(b) = \Op(\sigma) \Pi^{stan} \Op(\sigma)^{-1}
	\]
\end{enumerate} 
\end{lemma}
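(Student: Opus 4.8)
The plan is to prove the cyclic chain of implications $(1) \Rightarrow (2) \Rightarrow (3) \Rightarrow (1)$, using the commutation identities just established — in particular the factorisation $\Pi^{stan} = TS$ with $Tf(x,x') = f(x')$, $Sg(x') = g(0,x')$, the reduction formulas $\Op(a)T = \Op(a(x,0,x',\xi'))T$, $S\Op(b) = S\Op(b(0,\xi,x',\xi'))$, and the fact that symbols depending only on $(x',\xi')$ commute with $T$ and $S$ by \eqref{eq:commutation}. Throughout, ``near $0$'' means microlocally, i.e.\ modulo $\mathcal{O}(e^{-1/Ch})$ on $\mathcal{C}(A,B)$, and ``projector'' means squares to itself modulo such errors.

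First I would treat $(1) \Rightarrow (2)$. Writing $P = \Op(a)\Pi^{stan}\Op(b) = \Op(a) T S \Op(b)$, compute $P^2 = \Op(a) T \,(S\Op(b)\Op(a)T)\, S\Op(b)$. The middle factor $S\Op(b)\Op(a)T = S\Op(b\#a)T = \Op(d)$ is a pseudor in the $(x',\xi')$ variables only, namely $d(x',\xi') = (b\#a)(0,0,x',\xi')$; similarly $S\Op(b)\Op(a)T$ appearing as the coefficient. Hence $P^2 = \Op(a)T\Op(d)SB = \Op(a)\Op(\tilde d)TSB$ where $\tilde d$ is $d$ viewed as a symbol trivial in $(x,\xi)$, using \eqref{eq:commutation}; more carefully one pushes $\Op(d)$ to the left through $T$. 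The condition $P^2 = P$ then reads $\Op(a\# \tilde d) T S \Op(b) = \Op(a) TS\Op(b)$, and since $T$, $S$ are (microlocally) ``injective enough'' in the relevant sense — one tests against the wave packets $f_\gamma$ as in Proposition \ref{prop:FromC(AxB)toKernel} — this forces $\Op(a \# \tilde d) T = \Op(a) T$ near $0$, i.e.\ $d(x',\xi') = 1 + \mathcal{O}(e^{-1/Ch})$ along $x = \xi = 0$. Unwinding $d = (b\#a)(0,0,\cdot)$ and writing $\Op(b)^{-1}\Op(a) = \Op(c)$, this says exactly $c(0,0,x',\xi') = 1$, i.e.\ $c = 1 + \mathcal{O}(x,\xi)$ — which is (2).

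Next, $(2) \Rightarrow (3)$: given $\Op(b)^{-1}\Op(a) = \Op(c)$ with $c = 1 + \mathcal{O}(x,\xi)$, I want to find an elliptic $\sigma$ with $\Op(a)\Pi^{stan}\Op(b) = \Op(\sigma)\Pi^{stan}\Op(\sigma)^{-1}$. Since $\Op(a) = \Op(b)\Op(c)$, the left side is $\Op(b)\Op(c)\Pi^{stan}\Op(b)$. Now $\Op(c)\Pi^{stan} = \Op(c)TS = \Op(c(x,0,x',\xi'))TS$, and $c(x,0,x',\xi') = 1 + \mathcal{O}(x)$; the point is that an amplitude of this form acting on $T$ can be absorbed: one shows $\Op(c(x,0,x',\xi'))T = T\Op(e)$ for some elliptic symbol $e(x',\xi') = 1 + \mathcal{O}(e^{-1/Ch})$ near $0$ — indeed $\Op(1 + \mathcal{O}(x))T$ differs from $T$ only by terms where the $\mathcal{O}(x)$ factor hits and, by the commutation structure, these reduce to a symbol in $(x',\xi')$ evaluated at $x = 0$, giving $e \equiv 1$ to leading order, and one iterates. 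Hence $\Op(c)\Pi^{stan}\Op(b) = T\Op(e)S\Op(b) = TS\Op(\tilde e \# b) = \Pi^{stan}\Op(b')$ for $b' = \tilde e\# b$, so the left side is $\Op(b)\Pi^{stan}\Op(b')$; adjusting, one arrives at a form $\Op(\sigma)\Pi^{stan}\Op(\sigma)^{-1}$ by taking $\sigma = b$ modulo a symbol trivial on $\Sigma$ — the precise bookkeeping is to note that $\Pi^{stan}\Op(b') = \Op(b'')\Pi^{stan}$ again by moving the pseudor through $TS$ and restricting to $\xi = 0$, then matching $b \cdot (b'')^{-1}$ against the outer conjugating symbol. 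Finally $(3) \Rightarrow (1)$ is immediate: $\Op(\sigma)\Pi^{stan}\Op(\sigma)^{-1}$ is manifestly a projector since $\Pi^{stan}$ is, and it is of the required form with $a = \sigma$, $b = \sigma^{-1}$ — up to the harmless replacement $\Op(\sigma)^{-1} = \Op(\sigma^{-1}) + \mathcal{O}(e^{-1/Ch})$.

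The main obstacle I expect is the bookkeeping in $(1) \Rightarrow (2)$ and $(2) \Rightarrow (3)$ around the absorption lemma: precisely controlling that a pseudodifferential amplitude of the form $1 + \mathcal{O}(x)$ (resp.\ $\mathcal{O}(\xi)$) acting on $T$ (resp.\ $S$) can be rewritten as $T$ (resp.\ $S$) composed with a pseudor in $(x',\xi')$ alone, \emph{in the analytic category with exponentially small remainders}, controlling the symbol estimates through the iteration. The $C^\infty$ version of this is classical (it is the content of the computations preceding the lemma), but the analytic control of the iterated corrections — showing the resulting formal symbol $e = 1 + h e_1 + \cdots$ (which here is in fact $\equiv 1$ to all orders on $\Sigma$, since the obstruction $\mathcal{O}(x)$ kills every term) genuinely realizes as an analytic symbol with $\mathcal{O}(e^{-1/Ch})$ error — requires invoking the analytic stationary phase machinery as in Proposition \ref{prop:pseudos-downstairs} rather than a bare formal computation. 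The injectivity/testing step (against $f_\gamma$) is routine given Proposition \ref{prop:FromC(AxB)toKernel}.
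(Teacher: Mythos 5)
Your implication $(1)\Rightarrow(2)$ correctly extracts the actual projector condition, namely $(b\# a)(0,0,x',\xi')=1$, by the same mechanism as the paper (which conjugates to reduce to $b=1$ and computes $\bigl[\Pi^{stan}\Op(a)\Pi^{stan}\bigr]^2=\Pi^{stan}\Op(a)\Pi^{stan}$). But your closing line, ``this says exactly $c(0,0,x',\xi')=1$'' for $c$ the symbol of $\Op(b)^{-1}\Op(a)$, is a non sequitur: the condition you derived constrains $\Op(b)\Op(a)$, not $\Op(b)^{-1}\Op(a)$. These are genuinely different: with $a=2$, $b=1/2$ one has $\Op(a)\Pi^{stan}\Op(b)=\Pi^{stan}$, a projector, while $\Op(b)^{-1}\Op(a)=\Op(4)$. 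Item (2) as printed must be read as $\Op(b)\Op(a)=\Op(c)$ (a typo in the statement); you should either flag this or carry the correct condition, but you cannot silently identify the two.

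The more serious gap is the ``absorption lemma'' in $(2)\Rightarrow(3)$. The claimed identity $\Op(c(x,0,x',\xi'))\,T=T\Op(e)$ with $e\equiv 1$ is false: the range of $T$ consists of functions independent of $x$, and applying $\Op(1+\mathcal{O}(x))$ to $f(x')$ produces $f(x')+\mathcal{O}(x)f(x')$, which is neither independent of $x$ nor equal to $f(x')$ modulo $\mathcal{O}(e^{-1/Ch})$ --- multiplication by $x$ does not annihilate the range of $T$. The correct mechanism is one-sided and sits on the \emph{other} side of the projector: for a symbol $g$ independent of the first block $\xi$ with $g(0,x',\xi')=1$, the identity $S\Op(g)=S\Op(g(0,\xi,x',\xi'))=S$ gives $\Pi^{stan}\Op(g)^{\pm1}=\Pi^{stan}$, since the parametrix symbol is again $\xi$-independent and equal to $1$ at $x=0$. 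This is how the paper concludes: after reducing to $b=1$, set $\tilde a(x,\xi,x',\xi')=a(x,0,x',\xi')$, so that $\Op(a)\Pi^{stan}=\Op(\tilde a)\Pi^{stan}$ and $\Pi^{stan}\Op(\tilde a)^{-1}=\Pi^{stan}$, whence $\Op(a)\Pi^{stan}=\Op(\tilde a)\Pi^{stan}\Op(\tilde a)^{-1}$ with the explicit choice $\sigma=\tilde a$. No iteration and no analytic-convergence issue arises, so the ``main obstacle'' you anticipate dissolves once the correction is placed on the correct side. Your $(3)\Rightarrow(1)$ is fine.
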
 

\begin{proof} Let us assume that for some elliptic symbols $a,b$,
\[
\Op(a) \Pi^{stan} \Op(b)
\]
is a projector. Conjugating by $\Op(b)$, we can and will assume that $b=1$. The condition becomes
\[
\Op(a) \Pi^{stan} \Op(a) \Pi^{stan} = \Op(a) \Pi^{stan}. 
\]
In particular, this gives
\[
\Big[ \Pi^{stan} \Op(a) \Pi^{stan} \Big]^2 = \Pi^{stan} \Op(a) \Pi^{stan}.
\]
This is exactly
\[
\Op(a(0,0,x',\xi'))^2 = \Op(a(0,0,x',\xi')). 
\]
Since $a$ is elliptic, this implies that $a(0,0,x',\xi') = 1$. Reciprocically, if $a(0,0,x',\xi') = 1$, we must have $\Pi^{stan} \Op(a) \Pi^{stan}= \Pi^{stan}$, so that $\Op(a) \Pi^{stan}$ is indeed a projector.

Now, if we consider the formula for $\Op(a) \Pi^{stan}$, we observe that it makes no difference to replace $a$ by $\tilde{a}(x,\xi,x',\xi') = a(x,0,x',\xi')$. In particular, we may as well assume that $a$ does not depend on $\xi$. However in this case, we also have the identity
\[
\Pi^{stan} \Op(\tilde{a}) = \Pi^{stan}\Op(a(0,0,x',\xi')) = \Pi^{stan}. 
\]
Since $a$ is elliptic, we may invert it and find that 
\[
\Pi^{stan} \Op(\tilde{a})^{-1} = \Pi^{stan}. 
\]
We have thus
\[
\Op(a) \Pi^{stan} = \Op(\tilde{a}) \Pi^{stan} \Op(\tilde{a})^{-1}. 
\]
\end{proof}

Let us come back to $\Pi'$. Composing $\Pi'$ with $T$ gives
\[
\Pi' T f \simeq \frac{1}{(2\pi h)^{2n}} \int_{\Gamma} e^{\frac{i}{h}(-\langle y,\xi\rangle + \langle x'-y',\xi'\rangle)}a(x,\xi,x',\xi') f(y') dy dy' d\xi d\xi'.
\]
By stationary phase, this is 
\[
\Pi' T f(x,x') \simeq \frac{1}{(2\pi h)^{n}} \int_{\Gamma} e^{\frac{i}{h}\langle x'-y',\xi'\rangle}a_T(x,x',\xi') f(y') dy dy' d\xi d\xi',
\]
for some new symbol $a_T$, which is also elliptic near $0$. This is nothing else than
\[
\Op( a_T ) T f, 
\]
where $a_T(x,\xi,x',\xi') = a_T(x,x',\xi')$ by abuse of notation. Proceeding likewise for $S\Pi'$, we obtain that for some elliptic symbol $a_S$
\[
S \Pi' \simeq S \Op(a_S) .
\]

Now, recall that $S \Pi' T$ must be an elliptic pseudo $P(x',h\partial_{x'})$ that we can invert. Now, we consider
\[
\Pi'':=\Pi' T P^{-1} S \Pi' = \Op(a_T) P^{-1} \Pi^{stan} \Op(a_S). 
\]
(the second equality follows from \eqref{eq:commutation}). One can check directly that it is an approximate projector. From the transport equation argument on the amplitudes used in the proof of Proposition \ref{prop:construction-many-projectors}, since $\Pi'\Pi'' = \Pi'' \Pi' = \Pi''$, we deduce that actually, 
\[
\Pi'' \simeq \Pi'
\]
In particular, Lemma \ref{lemma:local-projectors-around-Pi-stan} applies to $\Pi'$. 

We can sum up the results of this section so far as follows
\begin{theorem}\label{thm:local-structure-projectors}
Let $\phi$ be an analytic projector phase on a star shaped open set $U$, and $\alpha \in U$. We can find $\zeta_1,\dots,\zeta_n$ and $\zeta_1^\ast,\dots,\zeta_n^\ast$ analytic pseudors microlocally defined near $\Sigma_\phi$, an analytic approximate microlocal projector $\Pi$ with phase $\phi$, so that for any other elliptic analytic approximate microlocal projector $\Pi'$ on $U$ with same phase, there exists an analytic pseudor $A$ microlocally defined near $\Sigma_\phi$ with $A$ elliptic, so that 
\begin{align*}
\zeta_j \Pi &= \Pi \zeta^\ast_j = 0,\quad j=1\dots n\\
[\zeta_j,\zeta_k] &= [\zeta_j^\ast,\zeta_k^\ast] = 0,\\
[\zeta_j,\zeta_k^\ast] &=  h \delta_{jk}\\
\Pi' &= A \Pi A^{-1}. 
\end{align*}
(the equalities here are taken microlocally near $\Sigma_\phi$).

If $\Pi'$ is an analytic approximate microlocal projector with a different phase on the same set $U$, then the same statement holds, except that $A$ has to be taken in the class of elliptic FIOs with complex phase. 
\end{theorem}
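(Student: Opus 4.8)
The plan is to leverage the local product structure already established: the model projector $\Pi^{stan} = TS$, the fact that any microlocal projector $\Pi'$ with phase $\phi$ is (microlocally near $\Sigma_\phi$) intertwined with $\Pi^{stan}$ by an elliptic FIO $U$ quantizing the symplectomorphism $\kappa$ of Lemma \ref{lemma:flattening-pair}, and Lemma \ref{lemma:local-projectors-around-Pi-stan}. First I would fix one distinguished projector $\Pi$ --- for instance the one built in Proposition \ref{prop:construction-many-projectors} from the constant amplitude, i.e. $\Pi$ corresponding to $U^{-1}\Pi^{stan}U$. Pulling back the flat annihilators $h\partial_{x_j}$ and the multiplications $x_j$ through $U$, one sets $\zeta_j = U^{-1}(h\partial_{x_j})U$ and $\zeta_j^\ast = U^{-1}(x_j)U$; these are analytic pseudors microlocally near $\Sigma_\phi$ (this uses Egorov-type composition in the analytic category, valid because $U$ is elliptic near $\Sigma_\phi$ and the conjugation of a pseudor by an elliptic FIO with complex phase is again a pseudor). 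The commutation relations $[\zeta_j,\zeta_k]=[\zeta_j^\ast,\zeta_k^\ast]=0$ and $[\zeta_j,\zeta_k^\ast]=h\delta_{jk}$ are immediate from $[h\partial_{x_j},h\partial_{x_k}]=[x_j,x_k]=0$ and $[h\partial_{x_j},x_k]=h\delta_{jk}$, transported by conjugation. Likewise $\zeta_j\Pi = U^{-1}(h\partial_{x_j})\Pi^{stan}U = 0$ since $\Pi^{stan}f$ is independent of $x$, and $\Pi\zeta_j^\ast = U^{-1}\Pi^{stan}(x_j)U = 0$ since $(\Pi^{stan}g)(x,x')=g(0,x')$ kills the factor $x_j$.

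Next, for the conjugation statement with the \emph{same} phase: given another elliptic analytic microlocal projector $\Pi'$ with phase $\phi$, conjugate everything by $U$ to reduce to the model. By the reduction carried out above Theorem \ref{thm:local-structure-projectors} (eliminating $y'$, expanding in powers of $y$, then using the projector property together with the identities $\Op(a)T = \Op(a(x,0,x',\xi'))T$, $S\Op(a) = S\Op(a(0,\xi,x',\xi'))$, and the transport-equation rigidity of Proposition \ref{prop:construction-many-projectors}), one shows $U\Pi' U^{-1}$ is microlocally of the form $\Op(a)\Pi^{stan}\Op(b)$ for elliptic analytic symbols $a,b$. Lemma \ref{lemma:local-projectors-around-Pi-stan}(3) then yields an elliptic analytic symbol $\sigma$ with $U\Pi'U^{-1} = \Op(\sigma)\Pi^{stan}\Op(\sigma)^{-1}$, hence $\Pi' = (U^{-1}\Op(\sigma)U)\,\Pi\,(U^{-1}\Op(\sigma)U)^{-1}$, and $A := U^{-1}\Op(\sigma)U$ is the desired elliptic pseudor microlocally near $\Sigma_\phi$ (again a pseudor, not a genuine FIO, because $U$ drops out of the conjugation structure up to Egorov, the symplectomorphism underlying $A$ being the identity).

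For the last sentence --- $\Pi'$ with a \emph{different} phase $\phi'$ on the same $U$ --- the only change is that the flattening map of Lemma \ref{lemma:flattening-pair} applied to the pair $(\mathcal{J}_{\phi'},\mathcal{J}_{\phi'}^\ast)$ gives a second elliptic FIO $U'$ with $U'\Pi'U'^{-1}$ again of the reduced model form. Composing, $\Pi' = (U'^{-1})\Op(\sigma')\Pi^{stan}\Op(\sigma')^{-1}U' $ and $\Pi = U^{-1}\Pi^{stan}U$, so $\Pi' = F\Pi F^{-1}$ with $F = U'^{-1}\Op(\sigma')U$ (after absorbing the pseudo $\Op(\sigma')$): this $F$ quantizes the complex symplectomorphism $\kappa'^{-1}\circ\kappa$ relating the two involutive pairs, which carries no positivity, so $F$ is an elliptic FIO with genuinely complex phase, defined via a good contour as in Théorème 4.5 of \cite{Singularite-analytique-microlocale}. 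The main obstacle I expect is \emph{bookkeeping the good contours}: all the FIOs here ($U$, $U'$, their microlocal inverses, and the final $F$) have phases that are only positive along good contours rather than globally, so every composition and every Egorov step must be justified with the non-stationary-phase contour-deformation machinery already used in the proofs of Lemma \ref{lemma:inverse-FIO} and Proposition \ref{prop:pseudos-downstairs}; making sure the analytic symbol estimates survive each such composition, uniformly near $\Sigma_\phi$, is the delicate point, though it is entirely in the spirit of what has been done above and one can lean heavily on \cite{Singularite-analytique-microlocale} and \cite{Melin-Sjostrand-75}.
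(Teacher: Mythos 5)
Your proposal follows essentially the same route as the paper: flatten the pair $(\mathcal{J}_\phi,\mathcal{J}_\phi^\ast)$ via Lemma \ref{lemma:flattening-pair}, quantize to an elliptic FIO $U$ with good contours, pull back $\Pi^{stan}$ and its flat annihilators $h\partial_{x_j}$, $x_j$ to define $\Pi$, $\zeta_j$, $\zeta_j^\ast$, reduce the conjugated projector to the form $\Op(a)\Pi^{stan}\Op(b)$ using the transport-equation rigidity, and conclude with Lemma \ref{lemma:local-projectors-around-Pi-stan}; the observation that $A=U^{-1}\Op(\sigma)U$ is a pseudor by Egorov, and that a second flattening FIO handles the different-phase case, matches the paper's argument. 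This is correct and is the paper's own proof in all essentials.
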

Observe that the $A$ is not unique.

\subsubsection{The self-adjoint model case}

The drawback of the previous discussion is that it does not lend itself to a more precise analysis in the self-adjoint case. It also uses FIOs with non-positive phase which are not very practical to use outside of spaces of germs of analytic symbols. We thus study this almost independently. The most famous self-adjoint local model\footnote{We could probably make $\Pi^{stan}$ to be self-adjoint, if we used a different notion of duality. For example associated with the bracket $\langle f , g\rangle = \int f \overline{\hat{g}}$, but we digress.} is the following. Consider
\[
H= L^2(\C^n, e^{-|z|^2/h} dz), 
\]
and the Bargmann-Segal projector
\[
\Pi^{BS}f(z)= \frac{1}{(\pi h)^{n}} \int_{\C^n} e^{\frac{1}{h} \langle z, \overline{z'}\rangle - \frac{|z'|^2}{h}} f(z') dz'.
\]
It is the orthogonal projector on the holomorphic functions in $H$. We can conjugate it to a projector on $L^2(\C^n, dz)$, to obtain the phase 
\begin{align*}
\phi^{BS} 	&= - i \langle z, \overline{z'}\rangle + \frac{i}{2}(|z|^2 + |z'|^2) \\
			&=\Im(\langle z, \overline{z'}\rangle) +  \frac{i}{h}|z-z'|^2.
\end{align*}
In the imaginary part term we recognize the standard real symplectic form on $C^n \simeq \R^{2n}$. 

However, there is a way to construct a self-adjoint projector closer to $\Pi^{stan}$ from above. Indeed, we can decompose $\Pi^{stan}$ as a tensor product of the identity in the $x'$ variable, and a rank one projector in the $x$ variable:
\[
\Pi^{stan} f = | 1 \rangle \langle \delta_0 |_{x} \otimes \mathbf{1}_{x'}
\]
To obtain a self-adjoint projector, it suffices to replace $1$ by $u\in L^2_x$ with norm $1$, and construct
\[
\Pi_u f = |u \rangle \langle u|_{x} \otimes \mathbf{1}_{x'}.
\]
For this to be microsupported on $x=0$, the most natural choice is
\[
u = (\pi h)^{-n/2} e^{- \frac{x^2}{2h}}. 
\]
We thus get the projector\footnote{Of course, this is but a simple transformation away from being the Bargmann-Segal projector.}
\[
\Pi^{\widetilde{stan}} f(x,x') = \frac{1}{(\pi h)^n} \int_{\R^n} e^{- \frac{x^2 + y^2}{2h}} f(y,x')dy.
\]
The corresponding new operators are
\[
\zeta_j =\frac{1}{2}( h\partial_{x_j} - x_j),\quad \zeta_j^\ast  = - \frac{1}{2}( h\partial_{x_j} + x_j).
\]
We observe that here $\zeta_j^\ast$ really is the adjoint of $\zeta_j$, as the notation suggested. We also have the decomposition
\[
\Pi^{\widetilde{stan}} f = T_u T_u^\ast, 
\]
where
\[
T_u f(x,x') = u(x) f(x'). 
\]

Now, let us come back to the original normal form problem, and let $\Pi$ be a self-adjoint projector as before. According to Proposition \ref{prop:normal-form-positive-involutive}, we can find a real symplectomorphism $\kappa$ flattening $\Sigma$, and transforming the corresponding involutive manifold into the standard positive one. Let us quantize $\kappa$ into $U$ a unitary FIO. By assumption, we find that $U\Pi U^\ast$ is a self-adjoint projector FIO whose phase is the same as $\Pi^{\widetilde{stan}}$. Since the result is a projector, we can use Lemma \ref{lemma:local-projectors-around-Pi-stan} to deduce that
\[
U\Pi U^\ast  = \Op(a) \Pi^{\widetilde{stan}} \Op(a)^{-1},
\]
for some elliptic analytic symbol $a$. Now, both sides of the equation are self-adjoint, so that, with $A=\Op(a)$,
\[
A^\ast A \Pi^{\widetilde{stan}} = \Pi^{\widetilde{stan}} A^\ast A. 
\]
We can extend this identity to powers of $A^\ast A$, and then by analytic functional calculus, to $P= (A^\ast A)^{1/2}$ (the positive square root determination, which is also a pseudor). Then we have the polar decomposition: 
\[
A = B P, 
\]
with $B$ unitary, and
\[
A \Pi^{\widetilde{stan}} A^{-1} = B P \Pi^{\widetilde{stan}} P^{-1} B^\ast = B \Pi^{\widetilde{stan}} B^\ast.
\]
We have proved:
\begin{corollary}\label{cor:decomp-self-adjoint}
With the notations of Theorem \ref{thm:local-structure-projectors}, if the phase $\phi$ is self-adjoint, we can assume that $\Pi$ is self-adjoint, and if $\Pi'$ is also self-adjoint, then the conjugation is by unitary pseudors (or unitary FIOs if the phase are different). We can also assume that $\zeta_j^\ast$ is the adjoint of $\zeta_j$.
\end{corollary}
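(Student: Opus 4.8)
The plan is to package the preceding ``self-adjoint model case'' discussion as the argument, organized around three reductions. First, for the assertion that $\Pi$ may itself be taken self-adjoint: when $\phi$ is self-adjoint the construction underlying Theorem \ref{thm:local-structure-projectors} already produces a self-adjoint candidate, since in the construction of Proposition \ref{prop:construction-many-projectors} one has $S_1 = T_1^\ast$, so $\Op(c) = S_1 T_1$ is self-adjoint and positive; choosing $d$ with $\Op(d)$ positive self-adjoint and $\Op(c) = \Op(d)^2 + \mathcal{O}(e^{-1/Ch})$, the operator $T_1 \Op(d)\,(T_1\Op(d))^\ast$ is formally self-adjoint. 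The Boutet--Guillemin refinement, which replaces the amplitude by the unique formal analytic symbol solving $\Op(\zeta_j)C = 0$, $C\Op(\zeta_j^\ast) = 0$, $C^2 = C + \mathcal{O}(h^\infty)$, preserves this: if the $\zeta_j^\ast$ are taken to be the genuine adjoints of the $\zeta_j$, the whole system of equations is invariant under $C \mapsto C^\ast$, so by the uniqueness established in Proposition \ref{prop:construction-many-projectors} the solution is self-adjoint. In the end one gets $\Pi$ self-adjoint and (as the normal form will make manifest) $\zeta_j^\ast$ adjoint to $\zeta_j$.

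Second, I would perform the normal-form reduction. Given a second self-adjoint approximate projector $\Pi'$ with the same phase $\phi$, apply Proposition \ref{prop:normal-form-positive-involutive} to produce a \emph{real} analytic symplectomorphism $\kappa$ flattening $\Sigma_\phi$ and carrying $\mathcal{J}_\phi$ to the standard positive involutive manifold, and quantize $\kappa$ by a unitary FIO $U$; because $\kappa$ sends real points of $\Sigma_\phi$ to real points, the phase of $U$ may be taken real. Then $U\Pi U^\ast$ and $U\Pi' U^\ast$ are self-adjoint projector FIOs with the same phase as $\Pi^{\widetilde{stan}}$. Lemma \ref{lemma:local-projectors-around-Pi-stan} applies verbatim with $\Pi^{\widetilde{stan}} = T_u T_u^\ast$ in place of $\Pi^{stan} = TS$ (the only input used is the product decomposition and the commutation relations it yields), so each of $U\Pi U^\ast$, $U\Pi'U^\ast$ equals $\Op(a)\,\Pi^{\widetilde{stan}}\,\Op(a)^{-1}$ for an elliptic analytic symbol $a$. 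It therefore suffices to realize $\Op(a)\,\Pi^{\widetilde{stan}}\,\Op(a)^{-1}$ with the conjugating operator unitary.

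Third, the polar-decomposition step. Since $A := \Op(a)$ gives a self-adjoint operator $A\,\Pi^{\widetilde{stan}}\,A^{-1}$, the elliptic pseudor $A^\ast A$ commutes with $\Pi^{\widetilde{stan}}$; iterating, all powers $(A^\ast A)^k$ commute with it, and by analytic functional calculus so does the positive square root $P := (A^\ast A)^{1/2}$, again an elliptic analytic pseudor. Writing the polar decomposition $A = BP$ with $B$ unitary,
\[
A\,\Pi^{\widetilde{stan}}\,A^{-1} = B P\,\Pi^{\widetilde{stan}}\,P^{-1}B^\ast = B\,\Pi^{\widetilde{stan}}\,B^\ast .
\]
Composing $U$, $B$ and their inverses yields a unitary pseudor conjugating $\Pi$ to $\Pi'$, and transporting $\zeta_j^{\widetilde{stan}} = \tfrac12(h\partial_{x_j}-x_j)$, $(\zeta_j^\ast)^{\widetilde{stan}} = -\tfrac12(h\partial_{x_j}+x_j)$ through this unitary conjugation keeps $\zeta_j^\ast$ the adjoint of $\zeta_j$. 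When $\Pi$ and $\Pi'$ have different (self-adjoint) phases, the only change is that $\kappa$ need no longer be real, so $U$ becomes a unitary FIO with complex phase, while the polar-decomposition step is unaffected; this gives the parenthetical clause of the statement.

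I expect the genuinely delicate point to be the functional-calculus step: one must verify that $P = (A^\ast A)^{1/2}$ and the unitary factor $B$ remain in the analytic pseudodifferential class with controlled analytic symbol seminorms, which requires invoking the contour-integral (Cauchy) representation of the square root together with the analytic symbolic calculus rather than merely abstract spectral theory. Checking that Lemma \ref{lemma:local-projectors-around-Pi-stan} transfers to $\Pi^{\widetilde{stan}}$ is routine but should be stated explicitly; everything else is bookkeeping on top of the normal-form proposition.
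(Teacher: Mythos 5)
Your proposal follows the same route as the paper: reduction to the model $\Pi^{\widetilde{stan}}=T_uT_u^\ast$ via the real symplectomorphism of Proposition \ref{prop:normal-form-positive-involutive} quantized by a unitary FIO, application of Lemma \ref{lemma:local-projectors-around-Pi-stan}, commutation of $A^\ast A$ with $\Pi^{\widetilde{stan}}$ extended to $(A^\ast A)^{1/2}$ by analytic functional calculus, and the polar decomposition $A=BP$. The argument is correct, and your added remarks on the self-adjointness of $\Pi$ itself and on keeping $(A^\ast A)^{1/2}$ in the analytic pseudodifferential class make explicit points the paper leaves implicit.
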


\section{Involutive manifolds in transverse intersection}

In this section are gathered some relevant facts about involutive manifolds that were used in the proofs above. Most of them are probably classical, but are still convenient to recall. We will work in the holomorphic complex symplectic case (i.e ``smooth'' means holomorphic). When we write ``real symplectic'', we mean to work with real analytic real symplectic geometry, but will work with holomorphic extensions all the time. 

Some statements make sense in a purely real case, and in this case the proofs in the $C^\infty$ real symplectic case are almost identical. In the case of almost analytic involutive manifolds, one has modify slightly some notations, but the arguments are also essentially the same. 

\subsection{Involutive manifolds}

\begin{definition}
Let $(X,\omega)$ be a $2n$-dimensional symplectic manifold. Let $\mathcal{J}\subset X$ be a smooth submanifold. Then $\mathcal{J}$ is said to be involutive if
\[
(T\mathcal{J})^\perp \subset T\mathcal{J}.
\]
In other words, whenever for every $v\in T\mathcal{J}$, $\omega(u,v)=0$, then $u\in T\mathcal{J}$. By considerations of dimension, $\mathcal{J}$ must have dimension $n\leq p \leq 2n$. If $p=n$, $\mathcal{J}$ is Lagrangian and $T\mathcal{J}^\perp = T\mathcal{J}$. Coisotropic is a synonym for involutive. 

Let $(V,\omega)$ be a manifold endowed with a closed $2$-form with constant rank. $V$ is said to be presymplectic. The data $(\mathcal{J}, \omega_{|\mathcal{J}})$ form a presymplectic manifold if $\mathcal{J}$ is an involutive manifold.
\end{definition}

\begin{proposition}\label{prop:union-lagrangian}
Let $\mathcal{J}\subset X$ be a smooth submanifold. Assume that $\mathcal{J}$ is a union of lagrangian manifolds. Then $\mathcal{J}$ is involutive. 
\end{proposition}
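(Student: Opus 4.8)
The plan is to argue pointwise: fix $x\in\mathcal{J}$ and show $(T_x\mathcal{J})^\perp\subset T_x\mathcal{J}$. By hypothesis there is a Lagrangian submanifold $\Lambda$ with $x\in\Lambda\subset\mathcal{J}$, hence $T_x\Lambda\subset T_x\mathcal{J}$. Taking symplectic orthogonals reverses inclusions, so $(T_x\mathcal{J})^\perp\subset (T_x\Lambda)^\perp$. The whole point of $\Lambda$ being Lagrangian is that $(T_x\Lambda)^\perp=T_x\Lambda$. Chaining these, $(T_x\mathcal{J})^\perp\subset T_x\Lambda\subset T_x\mathcal{J}$, which is exactly the involutivity condition at $x$. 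Since $x$ was arbitrary, $\mathcal{J}$ is involutive.

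The only thing to be careful about is that ``union of Lagrangian manifolds'' must be read as: through every point of $\mathcal{J}$ there passes a Lagrangian submanifold contained in $\mathcal{J}$. With that reading the argument above is complete and requires nothing beyond linear symplectic algebra (the inclusion-reversing property of $(\cdot)^\perp$ on subspaces of a symplectic vector space, and the definition of Lagrangian as a maximal isotropic, equivalently self-orthogonal, subspace). In the complex-analytic setting one works with the holomorphic tangent spaces and the holomorphic symplectic form, but the linear algebra is identical; in the real case likewise. No dimension count is even needed, though one could add the remark that $\dim T_x\mathcal{J}\ge \dim T_x\Lambda = n$ is automatic.

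I do not expect a genuine obstacle here; the statement is essentially a formal consequence of the definitions. The one place where care is warranted is making explicit the quantifier in ``union of Lagrangian manifolds,'' since a careless reading (a set-theoretic union with no regularity or through-every-point condition) would not suffice — but in context $\mathcal{J}$ is already assumed to be a smooth submanifold and the families appearing in the paper (e.g.\ $\mathcal{J}_\phi=\cup_\gamma \mathcal{F}_\phi(\gamma)$ via \eqref{eq:pre-factorization of phi}) do have a Lagrangian leaf through each point, so the intended reading is the correct one.
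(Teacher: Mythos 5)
Your argument is exactly the paper's proof: pick a Lagrangian $L\subset\mathcal{J}$ through the point, use $T_xL\subset T_x\mathcal{J}$, reverse the inclusion under $(\cdot)^\perp$, and conclude via $(T_xL)^\perp=T_xL$. Correct and identical in approach.
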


\begin{proof}
Let $x\in \mathcal{J}$. Then there exists $L\subset \mathcal{J}$ a lagrangian manifold passing through $x$. We thus have
\[
T_x L \subset T_x \mathcal{J},
\]
and
\[
T_x \mathcal{J}^\perp \subset T_x L^\perp = T_x L \subset T_x \mathcal{J}.
\]
\end{proof}

According to Weinstein's embedding theorem, any lagrangian manifold $L$ has a neighbourhood that is symplectomorphic to a neighbourhood of the zero section in $T^\ast L$. The generalization to involutive manifolds is due to Gotay \cite{Gotay-1982}
\begin{theorem}[The co-isotropic embedding theorem]
Let $(V,\omega)$ be a presymplectic manifold. Then there exists a symplectic $(X,\omega')$ and an embedding $p:V \to X$ such that $p(V)$ is involutive and, as presymplectic manifolds,
\[
(V,\omega) \simeq (p(V), \omega'_{|V}).
\]
Additionally, for any other embedding $p' :V\to X'$, with involutive image, there exists a symplectomorphism $\kappa$ from a neighbourhood of $p(V)$ to a neighbourhood of $p'(V)$, such that $\kappa\circ p = p'$. 
\end{theorem}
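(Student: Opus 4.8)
The plan is to follow Gotay's original argument: first build an explicit symplectic model on a neighbourhood of the zero section of the dual bundle $(\ker\omega)^\ast \to V$, and then invoke a Moser deformation to show that any two involutive realizations of $(V,\omega)$ are related by a symplectomorphism fixing $V$.

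\emph{Existence.} Let $K := \ker\omega \subset TV$, a subbundle of some rank $k$ by the constant-rank hypothesis, and fix a complementary subbundle $G$, so that $\omega$ restricts fibrewise to a symplectic form on $G$. Let $\pi : X := K^\ast \to V$ be the dual bundle, with $V$ identified with its zero section. Over a trivializing chart pick a frame $e_1,\dots,e_k$ of $K$ with dual fibre coordinates $t_1,\dots,t_k$ on $K^\ast$, and let $\theta^1,\dots,\theta^k$ be the $1$-forms on $V$ dual to the $e_i$ on $K$ and vanishing on $G$; the $1$-form $\lambda := \sum_i t_i\,\pi^\ast\theta^i$ is independent of the chosen frame, hence globally defined, and we set $\omega' := \pi^\ast\omega + d\lambda$. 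This form is closed, its pullback to the zero section is $\omega$, and a pointwise computation along $V$ — using $\theta^i|_G = 0$, that $(\theta^i|_K)$ is a coframe, and that $\omega|_G$ is non-degenerate — shows that $\omega'$ is non-degenerate along $V$, hence on a neighbourhood, and that the $\omega'$-orthogonal of $T_xV$ at a zero-section point equals $K_x \subset T_xV$. Thus $V$ is involutive in $(X,\omega')$ and $\omega'_{|TV} = \omega$.

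\emph{Uniqueness.} Let $p:V\to X$ and $p':V\to X'$ be embeddings with involutive images such that $p^\ast\omega'_X = \omega = (p')^\ast\omega'_{X'}$. The symplectic normal bundle of either realization is canonically $(\ker\omega)^\ast$: since $p(V)$ is involutive, $(T p(V))^{\perp}$ corresponds under $dp$ to $K$; the symplectic form identifies $T X_{|p(V)}/(T p(V))^{\perp}$ with $(T p(V))^\ast$; and quotienting further by the image of $T p(V)/(T p(V))^{\perp}\cong(G,\omega|_G)$ yields $N_{p(V)}X\cong K^\ast$, compatibly with $p$. By the symplectic tubular-neighbourhood theorem, choose diffeomorphisms of neighbourhoods of $p(V)$ in $X$ and of $p'(V)$ in $X'$ onto a common neighbourhood of the zero section in $K^\ast$, each sending the embedding to the zero section and realizing the above normal-bundle identification, including the pairing between base and fibre directions; composing them produces a diffeomorphism $\chi$ from a neighbourhood of $p(V)$ onto a neighbourhood of $p'(V)$ with $\chi\circ p = p'$ and with $d\chi$ intertwining $\omega'_{X'}$ and $\omega'_X$ on all of $T X_{|p(V)}$.

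\emph{Moser step and main obstacle.} Set $\omega_0 := \chi^\ast\omega'_{X'}$ and $\omega_1 := \omega'_X$ near $p(V)$: both are symplectic and agree as bilinear forms at every point of $p(V)$, so $\eta := \omega_1 - \omega_0$ is closed and vanishes along $p(V)$; by the relative Poincaré lemma $\eta = d\beta$ with $\beta$ vanishing along $p(V)$. Shrinking the neighbourhood so that $\omega_t := \omega_0 + t\eta$ is symplectic for $t\in[0,1]$, solve $\iota_{X_t}\omega_t = -\beta$ for a time-dependent vector field vanishing along $p(V)$; its flow $\rho_t$ is defined near $p(V)$, fixes $p(V)$ pointwise, and satisfies $\rho_1^\ast\omega_1 = \omega_0$ by the usual computation. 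Then $\kappa := \chi\circ\rho_1^{-1}$ is the desired symplectomorphism, with $\kappa\circ p = \chi\circ p = p'$. The delicate point is not Moser's trick itself but the preparation for it: realizing the symplectic normal bundle geometrically so that $\omega_0$ and $\omega_1$ agree along $p(V)$ on the \emph{whole} of $T X_{|p(V)}$, not merely tangentially, since this is precisely what forces the primitive $\beta$ to vanish on $p(V)$ and hence the Moser flow to fix it. In the real-analytic (or holomorphic) category one must additionally check that the tubular neighbourhood, the primitive $\beta$, and the Moser flow can all be taken analytic, and in the almost-analytic setting one replaces them by almost analytic analogues as elsewhere in the paper.
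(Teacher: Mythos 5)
The paper does not prove this statement: it is quoted as Gotay's coisotropic embedding theorem with a pointer to \cite{Gotay-1982}, and is invoked only locally (in the proof of Proposition \ref{prop:normal-form-1-involutive}). So there is no in-paper argument to compare against; what you have written is essentially Gotay's original proof, and it is correct in outline. The existence half is complete: your $\lambda$ is the restriction to $K^\ast\cong G^{\circ}\subset T^\ast V$ of the tautological $1$-form, the frame-independence is clear from $\lambda_{(x,\mu)}(w)=\mu(\mathrm{pr}_K\, d\pi(w))$, and the nondegeneracy of $\omega'=\pi^\ast\omega+d\lambda$ along the zero section together with $(T_xV)^{\perp\omega'}=K_x$ checks out. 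In the uniqueness half the only compressed step is the one you yourself flag: to force $\omega_0=\omega_1$ on all of $TX|_{p(V)}$ one should record that, along $p(V)$, one can choose a Lagrangian complement $K'$ to $K=(Tp(V))^{\perp}$ inside the symplectic orthogonal $G^{\perp}$, giving a decomposition $TX|_{p(V)}=G\oplus\bigl(K\oplus K'\bigr)$ in which $\omega$ is the direct sum of $\omega|_G$ and the canonical pairing of $K$ with $K'\cong K^\ast$; doing this on both sides and building the tubular neighbourhoods from these complements is exactly what makes $\chi^\ast\omega'_{X'}$ and $\omega'_X$ agree pointwise on $p(V)$, after which the relative Poincar\'e lemma and the Moser flow go through as written. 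Since the paper uses the theorem only as a local statement in the analytic category, your closing caveat is easily discharged: locally the complements, the homotopy-formula primitive and the Moser flow can all be taken holomorphic.
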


Let us now describe the local structure of involutive manifolds.
\begin{definition}
Let $(V,\omega)$ be a presymplectic manifold. Then the bundle $\ker \omega$ is integrable, and defines the so-called \emph{null-foliation} $\mathcal{F}$ of $V$. Additionnally, if $\Sigma_1$ and $\Sigma_2$ are two local transversal to $\mathcal{F}$, and $\kappa:\Sigma_1\to \Sigma_2$ is a local holonomy along $\mathcal{F}$, then both $(\Sigma_1,\omega_{|\Sigma_1})$ and $(\Sigma_2,\omega_{|\Sigma_2})$ are symplectic, and $\kappa$ is a symplectomorphism.
\end{definition}

\begin{proof}
Since $\omega$ has constant rank, $\ker \omega$ is a smooth vector bundle. Let $X,Y$ be two vector fields contained in $\ker \omega$. Then
\[
\mathcal{L}_X \omega = \imath_X d\omega + d(\imath_X \omega) = 0,
\]
and likewise for $Y$, so that
\[
\imath_{[X,Y]} \omega = [\imath_X, \mathcal{L}_Y]\omega = 0.
\]
The bundle is thus integrable. 

The restriction of $\omega$ to $\Sigma_1$, must be closed, and since $\Sigma$ is complementary to $\ker \omega$, $\omega$ is non-degenerate on $T\Sigma_1$. We can realize $\kappa$ as the flow at time $1$ of a vector field $X$ along the leaves of $\mathcal{F}$, and since for such a vector field, $\mathcal{L}_X \omega =0$, we must have $\kappa^\ast \omega =\omega$.
\end{proof}

One easy corollary of the Weinstein neighbourhood theorem is that given a smooth lagrangian manifold $L$, the space of lagrangian manifolds that is $C^1$ close to $L$, modulo hamiltonian diffeomorphisms is isomorphic to a neighbourhood of zero in the cohomology group $H^1(L)$. In the case of involutive manifolds, the situation is much more involved. Indeed the global leaf structure of $\mathcal{F}$ has a role to play. In the particular case that the null foliation has compact leaves and $\pi : \mathcal{J}\to \mathcal{J}/\mathcal{F}$ is a smooth fibration, and one deforms \emph{in this class}, W. D. Ruan \cite{Ruan-2005} proved that the deformation space modulo hamiltonian equivalence is locally a smooth finite dimensional manifold. However in the general case, the deformation space may be quite wild, and \emph{not} a manifold. For references, we point to \cite{Oh-Park-2005} and \cite{Zambon-2008}. Since in the case that is of most interest to us the leaves of $\mathcal{F}$ will be contractible, we will be in the ``trivial'' case of this deformation theory. 

\begin{proposition}\label{prop:normal-form-1-involutive}
Let $\mathcal{J}\subset X$ be an involutive manifold of codimension $p$. Near any point $\alpha\in \mathcal{J}$, we can find Darboux coordinates $(x,x',\xi,\xi')$ such that 
in these coordinates $\mathcal{J}$ coincides with
\[
\mathcal{J}_p = \{ (x,x',\xi,0) \ |\ x,\xi\in \C^{n-p},\ x'\in\C^p\}\subset \C^{2n}.
\]
\end{proposition}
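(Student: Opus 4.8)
The plan is to reduce the statement to a standard Darboux-type argument by peeling off the null-foliation direction first. The key geometric input is that an involutive manifold $\mathcal{J}$ of codimension $p$ carries a null foliation $\mathcal{F}$ whose leaves are $p$-dimensional (the kernel of $\omega_{|T\mathcal{J}}$), and that any local transversal $\Sigma$ to $\mathcal{F}$ inside $\mathcal{J}$ is symplectic of dimension $2(n-p)$, with the holonomy maps being symplectomorphisms. So I would first fix $\alpha\in\mathcal{J}$, choose a local transversal $\Sigma$ through $\alpha$, and apply the usual Darboux theorem on the symplectic manifold $(\Sigma,\omega_{|\Sigma})$ to get coordinates $(x,\xi)$, $x,\xi\in\C^{n-p}$, on $\Sigma$ with $\omega_{|\Sigma}=dx\wedge d\xi$.

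Next I would extend these to a neighbourhood of $\alpha$ in $X$. The idea is: the $p$ functions cutting out $\mathcal{J}$, call them $\xi'_1,\dots,\xi'_p$, can be chosen so that $\mathcal{J}=\{\xi'=0\}$ and, because $\mathcal{J}$ is involutive, the $\xi'_j$ pairwise Poisson-commute on $\mathcal{J}$; in fact one can arrange $\{\xi'_i,\xi'_j\}=0$ identically near $\alpha$ by the standard argument (the Hamiltonian vector fields of the $\xi'_j$ are tangent to $\mathcal{J}$ and span the null directions, and one adjusts them to commute). Their joint Hamiltonian flow is then a local $\C^p$-action whose orbits are exactly the leaves of $\mathcal{F}$; pushing the coordinates $(x,\xi)$ off $\Sigma$ by this flow, and using the flow time as the remaining coordinates $x'_1,\dots,x'_p$, produces functions $(x,x',\xi,\xi')$. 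One checks $\{x'_i,\xi'_j\}=\delta_{ij}$ from the definition of $x'$ as flow parameter, $\{x'_i,x'_j\}=0$ after a further adjustment (Hamiltonian of $x'$ can be straightened since the $\xi'$'s already commute — this is the usual inductive Darboux step), and that the brackets among $x,\xi$ and the $\xi'$ are unchanged because $x,\xi$ are flow-invariant (their Hamiltonian fields commute with those of $\xi'$, as the latter are tangent to $\Sigma$'s translates) and $\xi'$ is flow-invariant. This yields a Darboux chart in which $\mathcal{J}=\{\xi'=0\}=\mathcal{J}_p$.

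The cleanest packaging is probably to invoke Gotay's co-isotropic embedding theorem (quoted above): the presymplectic manifold $(\mathcal{J},\omega_{|\mathcal{J}})$ has, locally, $\mathcal{J}\simeq \Sigma\times\C^p$ with $\Sigma$ symplectic and the $\C^p$ factor the null leaf, embedding into $T^\ast(\C^p)\times\Sigma\simeq \C^{2n}$ with $\mathcal{J}$ going to the zero section in the $T^\ast\C^p$ factor. By uniqueness in Gotay's theorem, any symplectic neighbourhood of $\mathcal{J}$ is symplectomorphic to this model, and composing with a Darboux chart on $\Sigma$ (coordinates $(x,\xi)$) and the canonical coordinates $(x',\xi')$ on $T^\ast\C^p$ gives exactly the claimed normal form $\mathcal{J}_p=\{\xi'=0\}$.

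I expect the main obstacle to be the bookkeeping in simultaneously straightening the foliation and the transversal symplectic structure — i.e.\ verifying that extending the Darboux coordinates of $\Sigma$ along the null flow does not disturb the Poisson relations, and that one can arrange $\{x'_i,x'_j\}=0$ without breaking $\{\xi'_i,\xi'_j\}=0$. In the holomorphic category nothing new happens (everything is an ODE/implicit-function argument with holomorphic dependence), and in the $C^\infty$ or almost-analytic cases the same proof goes through verbatim, which is why I would lean on the already-quoted Gotay theorem to absorb this step rather than redo the induction by hand.
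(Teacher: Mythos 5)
Your preferred packaging — trivialize the null foliation, observe that $(\mathcal{J},\omega_{|\mathcal{J}})$ is locally the standard presymplectic product $\Sigma\times\C^p$ (the form being leaf-independent because the holonomy is symplectic, equivalently because $d\omega=0$ forces $\partial_{x'}\omega=0$), and then invoke the uniqueness part of Gotay's coisotropic embedding theorem — is exactly the paper's proof. The hands-on alternative you sketch (commuting the defining functions and flowing out a Darboux chart) is also a valid classical route, but you correctly identify that Gotay's theorem absorbs precisely that bookkeeping, so nothing is missing.
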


\begin{corollary}\label{cor:local-annihilation}
Involutive manifolds of $X$ of codimension $p$ are exactly those submanifolds $\mathcal{J}$ of $X$ such that close to any point $\alpha\in X$, one can find locally defined smooth functions $\zeta_1, \dots, \zeta_p$ satisfying
\begin{enumerate}
	\item $\mathcal{J}$ coincides with $\{\beta\ |\ \zeta_1(\beta)=\dots= \zeta_p(\beta) =0\}$.
	\item $d\zeta_1$, \dots, $d\zeta_p$ are independent at $\mathcal{J}$.
	\item for $j,\ell=1\dots p$, $\{\zeta_j, \zeta_\ell\}_{|\mathcal{J}}=0$. 
\end{enumerate}
One may additionally assume that the brackets $\{\zeta_j, \zeta_\ell\}_{|\mathcal{J}}$ vanish everywhere, not only on $\mathcal{J}$.
\end{corollary}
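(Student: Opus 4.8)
The plan is to extract the functions $\zeta_j$ directly from the normal form provided by Proposition \ref{prop:normal-form-1-involutive}, and then upgrade the bracket condition from "vanishing on $\mathcal{J}$" to "vanishing everywhere" by a change of generators. First I would prove the easy implication: if $\mathcal{J}$ is involutive of codimension $p$, apply Proposition \ref{prop:normal-form-1-involutive} to obtain Darboux coordinates $(x,x',\xi,\xi')$ near $\alpha$ in which $\mathcal{J}=\mathcal{J}_p=\{\xi'=0\}$. Then set $\zeta_j=\xi'_j$ for $j=1,\dots,p$. Conditions (1) and (2) are immediate, and since the $\xi'_j$ are among the Darboux momentum coordinates, $\{\zeta_j,\zeta_\ell\}=\{\xi'_j,\xi'_\ell\}=0$ \emph{identically}, not just on $\mathcal{J}$; so in fact this choice already gives the last sentence of the corollary for free. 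This handles one direction completely.

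For the converse, suppose $\mathcal{J}$ is a submanifold of codimension $p$ admitting local functions $\zeta_1,\dots,\zeta_p$ satisfying (1)--(3). I would show $\mathcal{J}$ is involutive by a tangent-space computation: at a point $\beta\in\mathcal{J}$, (2) gives $T_\beta\mathcal{J}=\bigcap_j\ker d\zeta_j(\beta)$, and with the Hamiltonian vector fields $H_{\zeta_j}$ one has $d\zeta_\ell(H_{\zeta_j})=\{\zeta_\ell,\zeta_j\}$, which vanishes on $\mathcal{J}$ by (3); hence each $H_{\zeta_j}(\beta)\in T_\beta\mathcal{J}$. Since $\omega(H_{\zeta_j},v)=d\zeta_j(v)=0$ for every $v\in T_\beta\mathcal{J}$, the $H_{\zeta_j}$ span (a complement realizing) $(T_\beta\mathcal{J})^\perp$, and as they lie in $T_\beta\mathcal{J}$ we get $(T_\beta\mathcal{J})^\perp\subset T_\beta\mathcal{J}$, i.e.\ $\mathcal{J}$ is involutive.

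It remains to justify the final "one may additionally assume" clause in full generality: given any system $\zeta_1,\dots,\zeta_p$ as in (1)--(3), produce a new system $\tilde\zeta_1,\dots,\tilde\zeta_p$ with the same zero set and independent differentials but with $\{\tilde\zeta_j,\tilde\zeta_\ell\}\equiv 0$ everywhere. The clean way is to combine the two halves: having shown $\mathcal{J}$ is involutive, apply Proposition \ref{prop:normal-form-1-involutive} to get Darboux coordinates with $\mathcal{J}=\{\xi'=0\}$, and take $\tilde\zeta_j=\xi'_j$, whose brackets vanish identically. (If one instead wants to stay with a system close to the given $\zeta_j$, one can write $\tilde\zeta=M(\beta)\zeta$ for an invertible matrix-valued function $M$ and solve the resulting first-order PDE system $\{M\zeta,M\zeta\}=0$ using Frobenius/the involutivity of the distribution spanned by the $H_{\zeta_j}$ modulo $T\mathcal{J}$ — but this is more work and the normal-form route is cleaner.) The main obstacle is really just invoking Proposition \ref{prop:normal-form-1-involutive} correctly in the holomorphic category and checking that "independent differentials on $\mathcal{J}$" plus "common zero set $=\mathcal{J}$" genuinely pins down the codimension and the tangent space; once the normal form is in hand, every assertion reduces to the model computation on $\mathcal{J}_p$.
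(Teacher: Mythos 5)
Your proposal is correct and follows essentially the same route as the paper: the converse direction via the tangency of the Hamiltonian fields $H_{\zeta_j}$ to $\mathcal{J}$ (using that $d\zeta_\ell(H_{\zeta_j})=\{\zeta_\ell,\zeta_j\}$ vanishes on $\mathcal{J}$ and that these fields span $(T\mathcal{J})^\perp$), and the forward direction plus the final ``vanish everywhere'' clause by invoking the normal form of Proposition \ref{prop:normal-form-1-involutive} and taking $\zeta_j=\xi'_j$. No gaps.
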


We prove the corollary first:
\begin{proof}
There are two implications to prove. We consider first a set $\mathcal{J}\subset X$ that comes with such a collection of local systems $(\zeta_1, \dots, \zeta_p)$. From the independence of the $d\zeta_j$, $\mathcal{J}$ must be a smooth codimension $p$ submanifold. 

We have two notions of orthogonality: the natural one between vectors in $TX$ and covectors in $T^\ast X$, and the one between two vectors in $TX$, given by the symplectic form. The symplectic form also gives a representation of covectors as vectors, and under this representation, both notions of orthogonality are identified. We can thus write
\[
T\mathcal{J}^\perp \simeq\Span\{d\zeta_j \ |\ j=1\dots p\} \simeq \Span\{ H_{\zeta_j} \ |\ j=1\dots p\}.
\]
It remains thus to prove that the $H_{\zeta_j}$ are tangent to $\mathcal{J}$. Here the condition on the brackets ensures that for $j,\ell=1\dots p$,
\[
(H_{\zeta_j})_{|\mathcal{J}} \in \ker d\zeta_\ell,
\]
which is what we required. 

Now, in the other direction, we may take the normal form of Proposition \ref{prop:normal-form-1-involutive}, and take 
\[
\zeta_j = \xi'_j. 
\]
In particular, one sees that the brackets vanish identically in this case. 
\end{proof}

Let us now prove Proposition \ref{prop:normal-form-1-involutive}
\begin{proof}
Considering $\mathcal{J}$ as a presymplectic manifold, locally near $\alpha$, we can trivialize the null foliation, and pick Darboux coordinates on a transversal simultaneously, so that we have coordinates $(x,x',\xi)$ on $\mathcal{J}$ near $\alpha$, so that
\[
\omega_{|x'=0} = dx\wedge d\xi, 
\]
and
\[
\ker \omega = \Span\{\partial_{x'_j}\ |\ j=1\dots p\}.
\]
It follows that
\[
\omega = \omega(x,\xi,x'; dx, d\xi).
\]
Now, since $d\omega= 0$, we get
\[
\partial_{x'}\omega = 0, 
\]
so that finally $\omega = dx \wedge d\xi$. In other words, the only local invariants of presymplectic manifolds are the dimension and rank of the $2$-form. We can now use the Gotay theorem to conclude. 
\end{proof}

\begin{lemma}\label{lemma:Lagrangian-associated-with-Involutive-manifold}
Let $\mathcal{J}\subset X$ be an involutive submanifold of a symplectic manifold $X$, and let $\Sigma\subset \mathcal{J}$ be a symplectic submanifold, transverse to the null foliation $\mathcal{F}$ of $\mathcal{J}$. Assume that the global leaves of the null foliation are manifolds. Consider the set
\[
\Lambda_{\mathcal{J},\Sigma}:=\{ ( x, y)  \ | \ x\in \mathcal{F}(y),\ y\in\Sigma \} \subset X\times \Sigma.
\]
Then $\Lambda_{\mathcal{J},\Sigma}$ is a lagrangian submanifold (for the twisted symplectic form $\omega_X - \omega_\Sigma$).  
\end{lemma}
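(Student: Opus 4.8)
\textbf{Proof plan for Lemma \ref{lemma:Lagrangian-associated-with-Involutive-manifold}.}

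The plan is to verify the two defining properties of a Lagrangian submanifold: that $\Lambda_{\mathcal{J},\Sigma}$ has the correct dimension, and that the twisted symplectic form $\omega_X - \omega_\Sigma$ vanishes on its tangent spaces. First I would fix a point $(x,y) \in \Lambda_{\mathcal{J},\Sigma}$ and describe $T_{(x,y)}\Lambda_{\mathcal{J},\Sigma}$ concretely. A tangent vector is of the form $(\dot x, \dot y)$ with $\dot y \in T_y\Sigma$ and $\dot x \in T_x\mathcal{F}(y)$ coupled to $\dot y$ via the variation of the leaf: as $y$ moves in $\Sigma$, the leaf $\mathcal{F}(y)$ sweeps out, so $\dot x$ decomposes as a ``vertical'' part tangent to $\mathcal{F}(x) = \mathcal{F}(y)$ (of dimension $p = \operatorname{codim}\mathcal{J}$, unconstrained) plus a part determined by $\dot y$. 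This gives $\dim \Lambda_{\mathcal{J},\Sigma} = \dim\Sigma + p = (2n - 2p) + p = 2n - p$, which is exactly half of $\dim(X \times \Sigma) = 2n + (2n-2p) = 4n - 2p$. So the dimension count works out, and it only remains to check isotropy.

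For isotropy, the cleanest approach is to use the normal form of Proposition \ref{prop:normal-form-1-involutive} together with the trivialization of the null foliation: near $x$ we may choose Darboux coordinates $(x_1, x', \xi_1, \xi')$ on $X$ (with $x_1,\xi_1 \in \C^{n-p}$, $x',\xi' \in \C^p$) in which $\mathcal{J} = \{\xi' = 0\}$ and the leaves of $\mathcal{F}$ are the sets $\{x_1 = c_1, \xi_1 = c_2, \xi' = 0\}$, parametrized by $x'$. In these coordinates $\Sigma$, being a transversal to $\mathcal{F}$ inside $\mathcal{J}$ on which $\omega$ restricts to $dx_1 \wedge d\xi_1$, can be straightened (shrinking if necessary and using the holonomy-invariance of $\omega|_\Sigma$) to $\{x' = 0, \xi' = 0\}$ with coordinates $(x_1,\xi_1)$ and symplectic form $\omega_\Sigma = dx_1 \wedge d\xi_1$. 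Then
\[
\Lambda_{\mathcal{J},\Sigma} = \{ (x_1, x', \xi_1, 0 \,;\, x_1, \xi_1) \ \mid\ x_1, \xi_1 \in \C^{n-p},\ x' \in \C^p \},
\]
and a tangent vector is $(\dot x_1, \dot x', \dot\xi_1, 0\,;\, \dot x_1, \dot \xi_1)$. Computing $(\omega_X - \omega_\Sigma)$ on two such vectors, the $\mathcal{J}$-block contributes $\dot x_1 \cdot \ddot\xi_1 - \ddot x_1 \cdot \dot\xi_1$ (the $x'$ components pair against the absent $\xi'$ components and drop out), and the $\Sigma$-block contributes exactly the same quantity; since the form is \emph{twisted}, these cancel. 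Hence the form vanishes identically on $T\Lambda_{\mathcal{J},\Sigma}$, and $\Lambda_{\mathcal{J},\Sigma}$ is Lagrangian.

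I expect the main obstacle to be the smoothness of $\Lambda_{\mathcal{J},\Sigma}$ as a global object and the legitimacy of the local straightening — specifically, arguing that the map $y \mapsto \mathcal{F}(y)$ depends smoothly on $y$ and that the total space $\{(x,y) : x \in \mathcal{F}(y), y \in \Sigma\}$ is an embedded submanifold rather than merely an immersed one. This is where the hypothesis that the global leaves are manifolds enters: it guarantees that $\mathcal{J} \to \mathcal{J}/\mathcal{F}$ is a well-behaved fibration near $\Sigma$, so that $\Sigma$ is a local (indeed, by transversality, a genuine) section and $\Lambda_{\mathcal{J},\Sigma}$ is diffeomorphic to the fiber product $\mathcal{J} \times_{\mathcal{J}/\mathcal{F}} \Sigma \cong \mathcal{J}$, hence smooth of dimension $\dim\mathcal{J} = 2n - p$. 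Once this structural point is in place, the isotropy computation above is purely local and routine, so I would organize the writeup to dispatch the smoothness/dimension claim first via the fibration picture, then carry out the coordinate computation.
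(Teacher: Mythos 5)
Your overall strategy --- dimension count plus an isotropy check in the Darboux normal form of Proposition \ref{prop:normal-form-1-involutive} --- is the same as the paper's, and your coordinate computation is correct where it applies. The one step that does not survive scrutiny as written is the claim that, in a chart centred at $x$ in which $\mathcal{J}=\{\xi'=0\}$ and the leaves are the $x'$-lines, ``$\Sigma$ can be straightened to $\{x'=0,\xi'=0\}$.'' When $x$ lies far from $y$ along the global leaf $\mathcal{F}(y)$, the chart around $x$ need not meet $\Sigma$ at all, so there is nothing to straighten; and such points $(x,y)\in\Lambda_{\mathcal{J},\Sigma}$ are precisely the ones the lemma must cover, since the leaves are taken globally. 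What saves the argument is the ingredient you name but do not quite deploy: the holonomy along $\mathcal{F}$ from the local transversal $\{x'=0\}$ of your chart to $\Sigma$ is a symplectomorphism, so near $(x,y)$ one has $\Lambda_{\mathcal{J},\Sigma}=\{(x_1,x',\xi_1,0;\sigma(x_1,\xi_1))\}$ with $\sigma^\ast\omega_\Sigma=dx_1\wedge d\xi_1$, and your cancellation then goes through verbatim. The paper packages this differently: it first verifies the lagrangian property near the diagonal over $\Sigma$ (where your computation is literally valid), and then observes that any vector field tangent to $\mathcal{F}$ extends to the Hamiltonian field of $g=X_1\xi'_1+\dots+X_p\xi'_p$, so a neighbourhood of $(y,y)$ can be carried to a neighbourhood of $(x,y)$ by a symplectomorphism preserving $\mathcal{J}$ (times the identity on the $\Sigma$ factor); since symplectomorphisms preserve lagrangians, the property propagates along the leaves. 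Either repair works; you should make one of them explicit rather than calling the away-from-$\Sigma$ case routine.
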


\begin{proof}
We have seen in the proof of Proposition \ref{prop:normal-form-1-involutive} that we can find local Darboux coordinates so that $\Sigma$ coincides with
\[
\{ (x,0,\xi,0) \ |\ (x,\xi)\in\C^{n-p} \}.
\]
and $\mathcal{J}$ coincides with
\[
\{ (x,x',\xi,0)\ |\ (x,\xi)\in\C^{n-p},\ x'\in \C^p\}.
\]
Then $\Lambda_{\mathcal{J},\Sigma}$ coincides with
\[
\{ (x,x',\xi,\xi',y,\eta)\ |\ x,y,\xi,\eta\in\C^{n-p},\ x',\xi'\in\C^p,\ x=y,\ \xi=\eta,\ \xi'=0\},
\]
with the symplectic form
\[
dx\wedge d\xi - dy\wedge d\eta + dx' \wedge d\xi'.
\]
This is certainly lagrangian, so that $\Lambda_{\mathcal{J},\Sigma}$ is lagrangian near $\Sigma$. 

To conclude the proof, we observe that if $X$ is a vector field supported in a small open set of $\mathcal{J}$, and tangent to $\mathcal{F}$, then writing in local coordinates
\[
X(x,x',\xi) = X_1 \frac{\partial}{\partial x'_1} + \dots + X_p \frac{\partial}{\partial x'_p}, 
\]
we see that $X$ is the restriction to $\mathcal{J}$ of the Hamilton vector field of 
\[
g(x,x',\xi',\xi')= X_1 \xi'_1 + \dots + X_p \xi'_p.
\]
For any point $(x,a)\in \Lambda_{\mathcal{J},\Sigma}$, we can thus find a symplectomorphism $\kappa$ from a neighbourhood of $x$ to a neighbourhood of $a$ with $\kappa(x)=a$, and $\kappa$ preserving $\mathcal{J}$. Since symplectomorphisms preserve lagrangian manifolds, we are done
\end{proof}

\begin{definition}
Let $\mathcal{J}$ be an involutive manifold containing $\Sigma$ symplectic, transverse to $\mathcal{F}$, and such that $\Sigma=\widetilde{\Sigma\cap\R^{2n}}$. Then we say that $\mathcal{J}$ is (strictly) positive if $\Lambda_{\mathcal{J},\Sigma}$ is itself (strictly) positive. Actually, $\mathcal{J}$ is strictly positive if and only if
\[
i \omega(v,\overline{v}) < 0
\]
for every non zero $v\in T\mathcal{F}$ above $\Sigma$.
\end{definition}

\begin{proof}
We can find  real Darboux coordinates so that 
\[
\Sigma = \{ (x,0,\xi,0)\ |\ x,\xi\in\C^{n-p}\}.
\]
We can also assume that
\[
\mathcal{J} = \{ (x,x',\xi,F(x,x',\xi))\ |\ x,\xi\in\C^{n-p},\ x'\in\C^p\},
\]
where $F(\bullet,0,\bullet)=0$ to ensure that $\Sigma\subset \mathcal{J}$. Since $\mathcal{J}$ must be involutive, we use the Lie brackets caracterisation to deduce that 
\[
\partial_{x'_j} F_k - \partial_{x'_k} F_j = \sum_{\ell} \partial_{\xi_\ell} F_j \partial_{x_\ell} F_k - \partial_{\xi_\ell} F_k \partial_{x_\ell} F_j. 
\]
i.e\footnote{Equation \eqref{eq:master-equation} is called the ``master equation'' in \cite{Oh-Park-2005} for example, and it is an instance of a Maurer Cartan equation: it can be formulated as
\[
d_{\mathcal{F}} F = \{ F, F \}.
\]}
\begin{equation}\label{eq:master-equation}
\partial_{x'} F - (\partial_{x'} F)^\top = (\partial_x F)^\top \partial_\xi F - (\partial_\xi F)^\top \partial_x F. 
\end{equation}
In particular, $\partial_{x'} F(\bullet,0,\bullet)$ is valued in symmetric matrices.

By definition, $\Lambda$ is strictly positive if for every $\alpha\in\Lambda_\R$, and every $u\in T_\alpha\Lambda$ not in $T(\Lambda_\R)\otimes \C$, $-i\omega(u,\overline{u})>0$. Such a tangent vector writes as
\[
u =( v, w, t, d_{(x,0,\xi)}F(v,w,t); v, t),
\]
so that $w\neq 0$. Since $F(x,0,\xi)=0$, this simplifies to
\[
u= (v, w, t, \partial_{x'}F(x,0,\xi)w ; v , t)
\]
Now, 
\[
\omega(u,\overline{u}) = \langle \partial_{x'}F(x,0,\xi)w, \overline{w}\rangle - \langle \overline{\partial_{x'}F(x,0,\xi)w}, w\rangle.
\]
Since $\partial_{x'} F(\bullet,0,\bullet)$ is a symmetric matrix, we conclude that
\[
\omega(u,\overline{u}) = 2 i \langle \Im (\partial_{x'} F(x,0,\xi)) w, \overline{w}\rangle.
\]
From the computation above, we also obtain that the kernel of $\omega_{|T\mathcal{J}}$ above $\Sigma$ is given by the vectors
\[
(0,w,0, \partial_{x'}F w),
\]
so we get the desired equivalence.
\end{proof}

According to Proposition \ref{prop:normal-form-1-involutive}, from the point of view of purely complex symplectic geometry, all involutive manifolds look locally the same. However, when working with positive involutive manifolds, it is natural to try to restrict to \emph{real symplectic} mappings. Such maps preserve the class of positive involutive manifolds, and we obtain:
\begin{proposition}\label{prop:normal-form-positive-involutive}
Let $\mathcal{J}$ be a strictly positive complex symplectic involutive manifold containing $\Sigma=\widetilde{\Sigma\cap\R^{2n}}$, so that 
\[
\mathcal{J}_\R = \Sigma_\R.
\]
We can find local real analytic, real Darboux coordinates in which $\Sigma$ coincides with 
\[
\{ (x,0,\xi,0)\ |\ x,\xi\in\C^{n-p}\},
\]
and $\mathcal{J}$ coincides with 
\[
\{ (x, x', \xi, i x')\ |\ x,\xi\in \C^{n-p},\ x'\in \C^p \}. 
\]
\end{proposition}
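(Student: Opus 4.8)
The plan is to start from the intermediate normal form that already appeared in the computation following the definition of strict positivity above: after a first real‑analytic, real symplectic change of coordinates we may assume that $\Sigma=\{x'=\xi'=0\}$ and
\[
\mathcal J=\{(x,x',\xi,F(x,x',\xi))\},\qquad F(\cdot,0,\cdot)=0,
\]
where $F$ satisfies the master equation \eqref{eq:master-equation} and, by strict positivity, the matrix $M(x,\xi):=\partial_{x'}F(x,0,\xi)$ is symmetric with $\Im M>0$. The task is then to flatten $F$ down to its linear term $ix'$ by a further real symplectomorphism fixing $\{x'=\xi'=0\}$.

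First I would normalize the linearization $M$ to $i\,\mathrm{Id}_p$. Fibrewise in $(x,\xi)$ this is exactly the transitivity of $\mathrm{Sp}(2p,\R)$ on the Siegel upper half space $\{M=M^{\top},\ \Im M>0\}$, done in two elementary steps: writing $M=P+iQ$ with $P,Q$ real symmetric and $Q>0$, the real symplectic shear $(x',\xi')\mapsto(x',\xi'-Px')$ replaces $M$ by $iQ$, and then $(x',\xi')\mapsto(Q^{1/2}x',Q^{-1/2}\xi')$ — legitimate and real‑analytic because $Q>0$ has a real‑analytic symmetric positive square root — replaces $iQ$ by $i\,\mathrm{Id}$. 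The one point requiring care is that $P,Q$ depend on $(x,\xi)$, so these fibrewise maps are not themselves symplectic; but each is realized by an honest real‑analytic symplectomorphism near $\Sigma$ through its generating function, e.g. $G(x,x',\Xi,\Xi')=\langle x,\Xi\rangle+\langle \Xi',Q(x,\Xi)^{1/2}x'\rangle$ for the second step (and $\langle x,\Xi\rangle+\langle x',\Xi'\rangle+\tfrac12\langle x',P(x,\Xi)x'\rangle$ for the first). Such a $G$ is nondegenerate near $\Sigma$ because its $(x',\xi')$‑quadratic part is, it fixes $\{x'=\xi'=0\}$, and at the level of the linearization of $\mathcal J$ along $\Sigma$ it induces precisely the fibrewise map above — the discrepancy between $Q(x,\Xi)$ and $Q(x,\xi)$ being of higher order in $(x',\xi')$, hence invisible at this stage. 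After this reduction we may assume $F(x,x',\xi)=ix'+R(x,x',\xi)$ with $R=\mathcal O(x')^2$.

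It remains to remove $R$, for which I would run a Moser/deformation argument. The model map $\Phi:(x,x',\xi,\xi')\mapsto(x,x',\xi,\xi'-R(x,x',\xi))$ sends $\mathcal J$ onto $\mathcal J_1:=\{\xi'=ix'\}$ and fixes $\Sigma$, but is not symplectic; however $\Phi^{\ast}\omega$ differs from $\omega$ by an exact $2$‑form that vanishes at every point of $\Sigma$ (since $R=\mathcal O(x')^2$, a primitive is $\sum_jR_j\,dx'_j$). One then corrects $\Phi$ by a diffeomorphism $\psi$ fixing $\Sigma$ with $\psi^{\ast}\omega=\Phi^{\ast}\omega$, produced by the relative Moser trick applied to the path $\omega_t=(1-t)\Phi^{\ast}\omega+t\,\omega$ of symplectic forms (all symplectic near $\Sigma$ because they coincide on $\Sigma$), so that $\Phi\circ\psi^{-1}$ is a genuine real symplectomorphism fixing $\Sigma$. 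The genuinely delicate point — which I expect to be the main obstacle — is that to get $\Phi\circ\psi^{-1}(\mathcal J)=\mathcal J_1$ we need $\psi$ itself to preserve $\mathcal J$, i.e. the Moser vector field $X_t$ defined by $\iota_{X_t}\omega_t=-\beta_t$ must be chosen tangent to $\mathcal J$. This is possible because $\mathcal J$ is coisotropic for both $\Phi^{\ast}\omega$ and $\omega$, hence — along $\Sigma$, where all $\omega_t$ agree — for every $\omega_t$; one uses this freedom to pick the primitive $\beta_t$ of $\omega_t-\Phi^{\ast}\omega$ so as to force $X_t\in T\mathcal J$, equivalently to carry out the homotopy entirely inside the class of strictly positive involutive manifolds through $\Sigma$, solving at each time the cohomological equation $\{h_t,\zeta^t_j\}=\dot\zeta^t_j$ along $\mathcal J_t$ for the $p$ defining functions. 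This last equation is unobstructed because the null foliation of $\mathcal J_t$ is, near $\Sigma$, a fibration with contractible leaves — precisely the ``trivial'' case of the deformation theory recalled above. Integrating $X_t$ and composing the maps produces coordinates in which $\mathcal J=\{\xi'=ix'\}$, as claimed.
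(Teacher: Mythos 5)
Your first step (normalizing the linearization $\partial_{x'}F(x,0,\xi)$ to $i\,\mathrm{Id}$ via the $\mathrm{Sp}(2p,\R)$ action on the Siegel half-space, realized by real generating functions) is fine, and decomposing the problem into ``linear part first, then higher order'' is a legitimate alternative to the paper's single homotopy $(1-t)\phi+\tfrac{it}{2}x'^2$. The gap is in the second step, and it is exactly the point the proposition is about: the coordinates must be \emph{real} Darboux coordinates, i.e.\ the correcting symplectomorphism must be the holomorphic extension of a real analytic map of $\R^{2n}$. Your model map $\Phi:(x,x',\xi,\xi')\mapsto(x,x',\xi,\xi'-R)$ is built from the complex-valued remainder $R$, so it does not preserve $\R^{2n}$; the primitive $\sum_j R_j\,dx'_j$, the Moser forms $\omega_t$, the vector fields $X_t$ and hence $\Phi\circ\psi^{-1}$ are all genuinely complex. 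What you would obtain is a \emph{complex} symplectic normal form, which is already the content of Proposition \ref{prop:normal-form-1-involutive} (all involutive manifolds are locally alike under complex symplectomorphisms); the substance of Proposition \ref{prop:normal-form-positive-involutive} is precisely that for \emph{strictly positive} $\mathcal{J}$ the flattening can be done by a \emph{real} map. The paper's proof spends essentially all of its effort on this: it writes the deformation as a non-autonomous Hamiltonian flow, notes that the Hamilton--Jacobi equation only prescribes $p_t$ on $\mathcal{J}_t$, and uses strict positivity to see that $\mathcal{J}_t$ and $\overline{\mathcal{J}_t}$ are transverse off $\Sigma$ (this is the invertibility of the map $\psi_t$ built from $F_t$ and $F_t^\flat$), so that one may prescribe $p_t$ on $\mathcal{J}_t$ and extend it by the conjugation symmetry $\overline{q_t(x,\xi,\lambda,\mu)}=q_t(\overline{x},\overline{\xi},\overline{\mu},\overline{\lambda})$, the two prescriptions being compatible at $\Sigma$ because $\partial_t\Phi_t$ vanishes there ($G_t(x,\xi,0)=0$). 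None of this symmetrization appears in your argument, and without it the statement proved is strictly weaker.

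A secondary but real problem is your treatment of the tangency of the Moser field to $\mathcal{J}$. You set up a ``fix $\mathcal{J}$, deform $\omega_t$'' Moser scheme, but then justify the solvability of the tangency constraint by appealing to ``carrying out the homotopy inside the class of positive involutive manifolds and solving $\{h_t,\zeta^t_j\}=\dot\zeta^t_j$'', which is the \emph{other} scheme (fix $\omega$, move $\mathcal{J}_t$ by a Hamiltonian isotopy --- the one the paper actually uses). As stated, the interpolated forms $\omega_t$ are only shown to make $\mathcal{J}$ coisotropic \emph{along} $\Sigma$, not on a neighbourhood, and the constraint $(\beta_t+dh_t)|_{(T\mathcal{J})^{\perp_{\omega_t}}}=0$ is $p$ equations for one unknown function, whose solvability needs the foliated-cohomology argument you only gesture at. If you adopt the Hamiltonian-isotopy formulation from the start (as the paper does, via the generating function $\Phi_t$ of $\Lambda_{\mathcal{J}_t,\Sigma}$ and the Hamilton--Jacobi equation \eqref{eq:HJ-p}), both this issue and the bookkeeping of the tangency condition disappear, and the only remaining task is the reality of $p_t$ discussed above.
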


\begin{proof}
Just as above, we can find real Darboux coordinates so that $\Sigma$ coincides locally with
\[
\{ (x,0,\xi,0)\ |\ x,\xi\in\C^{n-p}\},
\]
and $\mathcal{J}$ with 
\[
\{ (x, x', \xi, F(x,x',\xi))\ |\ x,\xi\in \C^{n-p},\ x'\in \C^p \}. 
\]
Now, recall that $\Lambda_{\mathcal{J},\Sigma}$ is a lagrangian, so that we can find a phase function $\phi$ such that it is generated by
\[
\Phi:=\langle x - y, \xi\rangle + \phi(x,x',\xi). 
\]
Necessarily, $\partial_{x'}\phi(x,0,\xi)=0$ and $\phi(x,0,\xi)$ is a constant because $\Sigma\subset \mathcal{J}$, and we can assume this constant to be $0$. The positivity of $\mathcal{J}$ corresponds to $\Im \partial^2_{x'} \phi(x,0,\xi) > 0$. Let us thus form
\[
\Phi_t := \langle x-y,\xi\rangle + (1-t)\phi(x,x',\xi) + \frac{i t }{2} x'^2. 
\]
The set
\[
\mathcal{J}_t = \{ (x,x',\partial_x \Phi_t , \partial_{x'} \Phi_t)\ |\ x,y,\xi\in\C^{n-p},\ x'\in\C^p\},
\]
is a $2n - p$ dimensional manifold because $\partial_\xi \partial_x \Phi_t(x,0,\xi,y)=1$. It is a (disjoint) union of lagrangians, so it is an involutive manifold, and it is strictly positive for $t\in[0,1]$. 

We would like to find $p_t$ an analytic family of analytic function, real valued on the reals, so that the (non-autonomous) hamilton flow generated by $p_t$ at time $1$ maps $\mathcal{J}_0$ to $\mathcal{J}_1$. It is equivalent to requiring that $p$ solves the Hamilton Jacobi-type equation
\begin{equation}\label{eq:HJ-p}
\partial_t \Phi_t(x,x',\xi) = p_t(x,x',\partial_x \Phi, \partial_{x'} \Phi_t). 
\end{equation}
In other words, $p_t$ is determined on $\mathcal{J}_t$, and must take real values on the reals. Let us write
\[
\mathcal{J}_t = \{( x,x',\xi, F_t(x,x',\xi))\ |\ x,\xi\in \C^{n-p},\ x'\in\C^p\}.
\]
And also let
\[
F_t^\flat(x,x',\xi)= \overline{F_t(\overline{x,x',\xi})}.
\]
The map
\[
\psi_t:(x,\xi, x',\xi')\mapsto (x,\xi, \xi'- F_t(x,x',\xi), \xi'-F_t^\flat(x,x',\xi))
\]
is a local diffeomorphism thanks to the strict positivity of $\mathcal{J}_t$, so that we can define from $p$ another analytic function $q$ with
\[
q_t(x,\xi, \xi'-F, \xi'-F^\flat) = p_t(x,\xi,x',\xi'). 
\]
Let translate the reality condition of $p_t$ for $q_t$. 
\begin{align}
q_t(x,\xi, \xi' - F, \xi' - F^\flat) &= p_t(x,\xi,x',\xi') \notag \\
									&= \overline{p_t(\overline{x,\xi,x',\xi'})} \notag\\
									&= \overline{q_t(\overline{x}, \overline{\xi}, \overline{\xi} - F(\overline{x,x',\xi}), \overline{\xi} - F^\flat(\overline{x,x',\xi}))}, \notag \\
									&= \overline{q_t(\overline{x}, \overline{\xi}, \overline{\xi - F^\flat({x,x',\xi})}, \overline{\xi - F(x,x',\xi)})}, \label{eq:reality-condition-q}
\end{align}
This means
\[
\overline{q_t(x,\xi, \lambda, \mu)} = q_t(\overline{x,\xi,\mu, \lambda}).
\]
Additionally, the Hamilton Jacobi equation \eqref{eq:HJ-p} for $p_t$ is equivalent to the following equation on $q_t$:
\begin{equation}\label{eq:HJ-q}
q_t(x,\xi,0,\mu)= G_t(x,\xi,\mu). 
\end{equation}
Here, the function $G_t$ is determined from the knowledge of $\Phi_t$ and $F_t$. Additionally, from $\partial_t \Phi_t(x,0,\xi) =0$, we deduce that certainly.
\[
G_t(x,\xi,0)=0.
\]
Let us set
\[
\tilde{q}_t(x,\xi,\lambda,\mu) = G_t(x,\xi,\mu) + \overline{G_t(x,\xi, \overline{\lambda})}.
\]
This is well defined, and satisfies both the reality condition \eqref{eq:reality-condition-q} and the Hamilton-Jacobi equation \eqref{eq:HJ-q}.
\end{proof}

We observe that $\overline{\mathcal{J}}$ coincides locally with
\[
\{( x,x',\xi, F^\flat(x,x',\xi))\ |\ x,\xi\in C^{n-p},\ x'\in\C^p\}.
\]

We close this discussion with the following
\begin{lemma}\label{lemma:positive-lagrangian-inside-J}
Let $\mathcal{J}$ be a strictly positive involutive manifold, and let $\Lambda\subset \mathcal{J}$ be a lagrangian manifold. Then $\Lambda$ is strictly positive if and only if $\Lambda\cap \Sigma$ also is. 
\end{lemma}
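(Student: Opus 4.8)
The plan is to reduce everything to the normal form supplied by Proposition~\ref{prop:normal-form-positive-involutive}. After a real symplectomorphism we may assume that $\Sigma$ coincides with $\{(x,0,\xi,0)\ |\ x,\xi\in\C^{n-p}\}$ and that $\mathcal{J}$ coincides with $\{(x,x',\xi,ix')\ |\ x,\xi\in\C^{n-p},\ x'\in\C^p\}$, the standard strictly positive involutive manifold. Since such a map preserves both positivity of involutive manifolds and positivity of Lagrangian submanifolds (being real symplectic, it commutes with complex conjugation), proving the equivalence in this model is enough. So from now on I would work only with this $\mathcal{J}$.

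Next I would use the description of the null foliation in this model: above $\Sigma$, $T\mathcal{F}$ is spanned by the vectors $(0,w,0,iw)$, $w\in\C^p$, and a general leaf is $\mathcal{F}(x,\xi)=\{(x,x',\xi,ix')\ |\ x'\in\C^p\}$. Thus any Lagrangian $\Lambda\subset\mathcal{J}$ can be written, near a real point, as a graph over $\Sigma$ in the leaf directions: after possibly further adjusting coordinates on $\Sigma$ (a Lagrangian in the $2(n-p)$-dimensional $\Sigma$ that we may take to be $\{\xi=\partial_x S_0(x)\}$ for a generating function $S_0$), $\Lambda$ is cut out by $\xi=\partial_x S(x)$ on $\Sigma$ together with $x'=x'(x)$, i.e. $\Lambda=\{(x,x'(x),\partial_x S(x), i x'(x))\}$ for some holomorphic map $x\mapsto x'(x)$ and generating function $S$. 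Then $\Lambda\cap\Sigma$ is the locus where $x'(x)=0$, and near a real point the transversality/dimension count shows this is again a Lagrangian in $\Sigma$.

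The computational heart is then to compare $\tfrac{1}{i}\omega(v,\bar v)$ on $T\Lambda$ with the same quantity on $T(\Lambda\cap\Sigma)$. A tangent vector to $\Lambda$ at a real point decomposes as $v=v_\Sigma+v_{\mathcal F}$, where $v_\Sigma\in T(\Lambda\cap\Sigma)\otimes\C$ and $v_{\mathcal F}=(0,w,0,iw)$ lies in the null direction. Because $v_\Sigma\in T\Sigma\otimes\C$ is $\omega$-orthogonal to the null directions (that is the point of $\Sigma$ being transverse to $\mathcal F$ and $\mathcal J$ being involutive), the cross terms $\omega(v_\Sigma,\overline{v_{\mathcal F}})+\omega(v_{\mathcal F},\overline{v_\Sigma})$ vanish when one computes carefully using $\omega_X=dx\wedge d\xi+dx'\wedge d\xi'$ restricted to $\mathcal J$; hence $\tfrac1i\omega(v,\bar v)=\tfrac1i\omega(v_\Sigma,\overline{v_\Sigma})+\tfrac1i\omega(v_{\mathcal F},\overline{v_{\mathcal F}})$. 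On the null part, $\omega_X(v_{\mathcal F},\overline{v_{\mathcal F}})=\langle iw,\overline w\rangle-\langle\overline{iw},w\rangle=2i|w|^2$, so $\tfrac1i\omega(v_{\mathcal F},\overline{v_{\mathcal F}})=2|w|^2\geq0$, with strict positivity exactly the strict positivity of $\mathcal J$ (already recorded in the definition preceding Proposition~\ref{prop:normal-form-positive-involutive}). Therefore $\tfrac1i\omega(v,\bar v)>0$ for all $v\in T\Lambda$ not in $T(\Lambda_\R)\otimes\C$ if and only if $\tfrac1i\omega(v_\Sigma,\overline{v_\Sigma})>0$ for all $v_\Sigma\in T(\Lambda\cap\Sigma)$ not in the real part, which is precisely strict positivity of $\Lambda\cap\Sigma$; the ``$v_\Sigma=0$ but $w\neq0$'' vectors contribute a strictly positive $2|w|^2$ and cause no obstruction.

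The main obstacle I anticipate is purely bookkeeping: writing $T\Lambda$ correctly as a graph and verifying that the ``mixed'' symplectic pairings between the $\Sigma$-part and the leaf-part of a tangent vector really do cancel at real points — this uses both that $\mathcal J$ is involutive (so $T\mathcal F=\ker\omega_{|T\mathcal J}$ is $\omega$-orthogonal to all of $T\mathcal J$, in particular to $T\Sigma$) and that one works at a real point so that $\overline{v_\Sigma}$ still lies in $T\mathcal J\otimes\C$. Once that orthogonality is in hand the equivalence is immediate from the displayed splitting of $\tfrac1i\omega(v,\bar v)$. Everything here is local near a real point, which is all that is needed for the applications (e.g. in the proof following \eqref{eq:defining-Stilde}).
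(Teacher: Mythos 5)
Your overall strategy (reduce to the normal form of Proposition \ref{prop:normal-form-positive-involutive}, split tangent vectors into a $\Sigma$-part and a null-leaf part, observe the cross terms vanish and the leaf part contributes a definite sign) is the paper's argument. But the middle step, where you describe $\Lambda$, contains a genuine error: you write $\Lambda=\{(x,x'(x),\partial_x S(x), i x'(x))\}$ as a graph over $x\in\C^{n-p}$ alone. That set has dimension $n-p$, not $n$, so for $p\geq 1$ it cannot be a Lagrangian; and the ensuing claim that $\Lambda\cap\Sigma$ is ``the locus where $x'(x)=0$'' would generically produce something of too small a dimension to be a Lagrangian of $\Sigma$. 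The fact you are missing is that a Lagrangian contained in an involutive manifold is automatically a union of leaves of the null foliation: since $\Lambda\subset\mathcal{J}$ gives $T\mathcal{F}=(T\mathcal{J})^{\perp}\subset (T\Lambda)^{\perp}=T\Lambda$, the variable $x'$ ranges \emph{freely}, i.e. locally $\Lambda=\{(x,x',\partial_x f(x),ix')\ |\ x\in\C^{n-p},\ x'\in\C^p\}$, and $\Lambda\cap\Sigma=\{(x,0,\partial_x f(x),0)\}$ is the full projection of $\Lambda$ to $\Sigma$. This is exactly the parametrization the paper uses, and it is also the only way your own decomposition $T\Lambda=T(\Lambda\cap\Sigma)\otimes\C\oplus T\mathcal{F}$ (with $w$ arbitrary) can hold; as written, your graph description and your tangent-space decomposition are inconsistent with each other.

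Once that is corrected, your third paragraph is sound and coincides with the paper's computation: in the model $\omega=dx\wedge d\xi+dx'\wedge d\xi'$ the $(x,\xi)$- and $(x',\xi')$-blocks decouple, the leaf directions $(0,w,0,iw)$ contribute a strictly definite term, and positivity of $\Lambda$ is therefore equivalent to $\Im\partial_x^2 f>0$ at real points, i.e. to positivity of $\Lambda\cap\Sigma$. So the gap is repairable in one line, but as stated the parametrization step fails on dimensional grounds and must be replaced by the observation that $T\mathcal{F}\subset T\Lambda$.
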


\begin{proof}
One of the implications is elementary, so we concentrate on the other one. Assume that $\Lambda\cap\Sigma$ is strictly positive. Then by Proposition \ref{prop:normal-form-positive-involutive}, we can write locally
\[
\Lambda = \{ (x, x', \partial_x f(x), ix')\ |\ x\in \C^{n-p},\ x'\in\C^p \}. 
\]
we see that $\Lambda$ is strictly positive if and only if at real points of $\partial_x f$,
\[
\Im \partial_x^2 f > 0.
\]
\end{proof}

\subsection{Pairs of involutive manifolds}
Now we consider the case of two involutive manifolds intersecting transversally.
\begin{lemma}\label{lemma:flattening-pair}
Let $\mathcal{J}$ and $\mathcal{J}^\ast$ be two codimension $p$ involutive manifolds and $\Sigma$ a symplectic manifold of dimension $2n-2p$ so that
\[
\mathcal{J} \cap \mathcal{J}^\ast = \Sigma
\]
is a transverse intersection. Then there exist Darboux coordinates $\kappa$ so that locally $\mathcal{J}$, (resp. $\mathcal{J}^\ast$, $\Sigma$) coincides with
\begin{align*}
\{ (x,x', \xi, 0)\ &|\ x,\xi\in\C^{n-p},\ x'\in \C^p\}, \\
\{ (x,0, \xi, \xi')\ &|\ x,\xi\in\C^{n-p},\ \xi'\in \C^p\}, \\
\{ (x,0, \xi, 0)\ &|\ x,\xi\in\C^{n-p}\}.
\end{align*}

If additionally $\Sigma=\widetilde{\Sigma\cap\R^{2n}}$, we can assume that $\kappa(\Sigma)\cap \R^{2n} = \kappa(\Sigma\cap\R^{2n})$. 
\end{lemma}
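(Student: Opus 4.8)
The plan is to flatten $\mathcal{J}$ and $\Sigma$ simultaneously first, and then to flatten $\mathcal{J}^\ast$ by a further symplectomorphism that does not move $\mathcal{J}$ or $\Sigma$. Before starting, one linear remark along $\Sigma$: since $T\Sigma\subset T\mathcal{J}$ and $\Sigma$ is symplectic, the null directions $T\mathcal{F}=(T\mathcal{J})^\perp$ satisfy $T\mathcal{F}\subset(T\Sigma)^\perp$ and $T\mathcal{F}\cap T\Sigma\subset T\Sigma\cap(T\Sigma)^\perp=\{0\}$, so $\Sigma$ is transverse to the null foliation $\mathcal{F}$ of $\mathcal{J}$, and likewise to the null foliation $\mathcal{F}^\ast$ of $\mathcal{J}^\ast$. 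Moreover $(T\mathcal{F}+T\mathcal{F}^\ast)^\perp=T\mathcal{J}\cap T\mathcal{J}^\ast=T\Sigma$, so $(T\Sigma)^\perp=T\mathcal{F}\oplus T\mathcal{F}^\ast$ is a pair of transverse Lagrangian subspaces of the $2p$-dimensional symplectic space $(T\Sigma)^\perp$. This is the linearisation of the desired model, and it also tells us that $\Sigma$ is an admissible transversal for Proposition~\ref{prop:normal-form-1-involutive}.

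Step 1. Apply Proposition~\ref{prop:normal-form-1-involutive} to $\mathcal{J}$ with the transversal chosen to be $\Sigma$: trivialising the null foliation of $\mathcal{J}$ along $\Sigma$ identifies a neighbourhood of the base point in $\mathcal{J}$ with $\Sigma\times\C^p$ carrying the presymplectic form $\mathrm{pr}_\Sigma^\ast\omega_\Sigma$, and Gotay's theorem then yields Darboux coordinates $(x,x',\xi,\xi')$, $x,\xi\in\C^{n-p}$, $x',\xi'\in\C^{p}$, in which $\omega=dx\wedge d\xi+dx'\wedge d\xi'$, $\mathcal{J}=\{\xi'=0\}$ and $\Sigma=\{x'=\xi'=0\}$. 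In the real case one first fixes a real-analytic real Darboux chart of $\Sigma\cap\R^{2n}$ inside $\Sigma$ and uses it as the identification of the transversal fed to Gotay, so that the coordinate map sends $\Sigma\cap\R^{2n}$ into $\R^{2n}$. In these coordinates, transversality of $\mathcal{J}\cap\mathcal{J}^\ast$ forces $T\mathcal{J}^\ast$ to meet the $x'$-planes trivially near $\Sigma$, so $\mathcal{J}^\ast$ is a graph $\{x'=G(x,\xi,\xi')\}$; the conditions $\Sigma\subset\mathcal{J}^\ast$ and $\mathcal{J}\cap\mathcal{J}^\ast=\Sigma$ give $G(x,\xi,0)=0$, and the involutivity of $\mathcal{J}^\ast$ translates into a master equation for $G$ as in \eqref{eq:master-equation} (in particular $\partial_{\xi'}G$ is symmetric on $\{\xi'=0\}$, consistent with the previous paragraph).

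Step 2 (the main point). Following the scheme of the proof of Proposition~\ref{prop:normal-form-positive-involutive}, we flatten $\mathcal{J}^\ast$ to $\{x'=0\}$ without disturbing $\mathcal{J}$ and $\Sigma$. By Lemma~\ref{lemma:Lagrangian-associated-with-Involutive-manifold} the set $\Lambda_{\mathcal{J}^\ast,\Sigma}\subset X\times\Sigma$ is lagrangian for the twisted form; identifying $\Sigma$ with $\{x'=\xi'=0\}$ through Step 1 and using the generating type adapted to the graph form $\{x'=G(x,\xi,\xi')\}$, it is generated near $\Sigma$ by a phase $\Phi=\langle x,\tilde\xi\rangle+\psi(x,\xi',\tilde\xi)$ with $\psi=\mathcal{O}(\xi')^2$ (this second-order vanishing being exactly the content of $\Sigma\subset\mathcal{J}^\ast$ together with the transversality), the model $\{x'=0\}$ corresponding to $\psi\equiv 0$. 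Homotope $\psi_t=(1-t)\psi$ and let $\mathcal{J}^\ast_t$ be the involutive manifold determined by $\Phi_t$ (it is involutive for the same reason as $\mathcal{J}_t$ in the proof of Proposition~\ref{prop:normal-form-positive-involutive}); then $\mathcal{J}^\ast_t$ is a family of involutive manifolds, all containing $\Sigma$ and — after shrinking the neighbourhood — all transverse to $\mathcal{J}$, with $\mathcal{J}^\ast_0=\mathcal{J}^\ast$ and $\mathcal{J}^\ast_1=\{x'=0\}$. Writing $\mathcal{J}^\ast_t=\{x'=G_t(x,\xi,\xi')\}$ one has $G_t(x,\xi,0)=0$, so $\partial_t G_t$ vanishes at $\xi'=0$, and the infinitesimal deformation of $\mathcal{J}^\ast_t$ can therefore be realised by the Hamilton field of a time-dependent Hamiltonian $p_t$ taken in the ideal generated by the products $\xi'_i\xi'_j$. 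Such a $p_t$ vanishes on $\mathcal{J}=\{\xi'=0\}$, hence its flow preserves $\mathcal{J}$, and it vanishes to second order along $\Sigma$, hence its flow fixes $\Sigma$ pointwise; since $p_t$ is real-analytic jointly in $(t,x,x',\xi,\xi')$, its time-one flow is an analytic symplectomorphism carrying $\mathcal{J}^\ast$ to $\{x'=0\}$. Composing it with the chart of Step 1 produces the desired $\kappa$.

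Step 3 and the real points. The chart of Step 1 maps $\Sigma\cap\R^{2n}$ into $\R^{2n}$ and the symplectomorphism of Step 2 fixes $\Sigma$ pointwise, so $\kappa$ restricts on $\Sigma$ to a real map onto the real slice $\{(x,0,\xi,0):x,\xi\in\R^{n-p}\}$ of the model; this is precisely the assertion $\kappa(\Sigma)\cap\R^{2n}=\kappa(\Sigma\cap\R^{2n})$. (Note that $\kappa$ itself cannot be taken real when $\mathcal{J}$ is not, but only its restriction to $\Sigma$ is needed here.) I expect the main obstacle to be Step 2: one must arrange that a single symplectomorphism straightens $\mathcal{J}^\ast$ while leaving both $\mathcal{J}$ and $\Sigma$ fixed, which is what forces the choice of $p_t$ in the ideal $(\xi'_i\xi'_j)$, and one must homotope through genuinely involutive manifolds, which is why it is the generating function $\psi$, and not the graph function $G$, that gets scaled.
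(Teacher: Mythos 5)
Your argument is correct, but the way you dispose of the second involutive manifold is genuinely different from the paper's. Both proofs share your Step 1: flatten $\mathcal{J}$ and $\Sigma$ first, so that $\mathcal{J}^\ast$ becomes a graph $\{x'=f(x,\xi,\xi')\}$ with $f(\cdot,\cdot,0)=0$. From there the paper does not deform anything: it sets $\zeta_j=x'_j-f_j$, notes that involutivity of $\mathcal{J}^\ast$ forces $\{\zeta_j,\zeta_k\}=0$ on $\mathcal{J}^\ast$ and hence identically (the brackets do not depend on $x'$), that $\{\zeta_j,\xi'_k\}=\delta_{jk}$, and then builds the new chart directly: the commuting fields $H_{\zeta_j},H_{\xi'_k}$ span a $2p$-dimensional orbit foliation transverse to $\Sigma$, and the functions $(g,\zeta;g^\ast,\xi')$ --- with $(g,g^\ast)$ the projection to $\Sigma$ along that foliation --- are verified to be Darboux coordinates using $d\omega=0$. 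You instead run a Moser/Hamilton--Jacobi deformation modelled on the paper's own proof of Proposition~\ref{prop:normal-form-positive-involutive}: scale the generating function of $\Lambda_{\mathcal{J}^\ast,\Sigma}$ (correctly the generating function rather than the graph function, so that the interpolating family stays involutive as a union of Lagrangian leaves), and take the time-dependent Hamiltonian in the ideal $(\xi'_i\xi'_j)$ so that its flow fixes $\{\xi'=0\}$ pointwise. This works: the Hamilton--Jacobi solution restricted to $\mathcal{J}^\ast_t$ is $-\partial_t\Phi_t=\psi=\mathcal{O}(\xi')^2$, and extending it independently of $x'$ over the graph puts it in the ideal by Hadamard's lemma. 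One small caution: the justification you give (``$\partial_tG_t$ vanishes at $\xi'=0$'') is first-order vanishing of the deformation, whereas what licenses the choice of $p_t$ in $(\xi'_i\xi'_j)$ is the \emph{second}-order vanishing of $\psi$ --- a derivative is lost in passing from $p_t$ to $H_{p_t}$ --- but you do have that fact in hand. As for what each route buys: the paper's construction is more elementary (no generating functions, no flow) and directly exhibits the commuting annihilating functions $\zeta_j,\xi'_k$ that are reused elsewhere (in the spirit of Corollary~\ref{cor:local-annihilation}); yours makes it transparent why $\mathcal{J}$ and $\Sigma$ are left untouched and generalizes mechanically, at the cost of the usual projectability and sign bookkeeping for the generating-function type adapted to a graph $\{x'=G(x,\xi,\xi')\}$.
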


\begin{proof}
This seems like a basic result from symplectic geometry, but we could not find it in the litterature so we give some details. 

We start by flattening $\mathcal{J}$ and $\Sigma$ using the previous results, and notice then that by transversality, $\mathcal{J}$, (resp. $\mathcal{J}^\ast$, $\Sigma$) locally coincide with
\begin{align*}
\{ (x,x', \xi, 0)\ &|\ x,\xi\in\C^{n-p},\ x'\in \C^p\}, \\
\{ (x,f(x,\xi,\xi'), \xi, \xi')\ &|\ x,\xi\in\C^{n-p},\ \xi'\in \C^p\}, \\
\{ (x,0, \xi, 0)\ &|\ x,\xi\in\C^{n-p}\}.
\end{align*}
Here, $f$ is a smooth function satisfying $f(\bullet,\bullet,0) = 0$. Notice that we can do this flattening mapping real points of $\Sigma$ to real points in the case that $\Sigma = \widetilde{\Sigma\cap \R^{2n}}$. We observe that the maps
\[
\zeta_j := x'_j - f_j
\]
vanish on $\mathcal{J}^\ast$. Since it is involutive, we deduce that
\[
\{ \zeta_j, \zeta_k\}_{|\mathcal{J}^\ast} = 0. 
\]
However these Lie brackets do not depend on $x'$ so that they vanish everywhere. We also notice that
\[
\{ \zeta_j, \xi'_k \} = \delta_{jk}.
\]
This suggests to consider the hamilton vector fields $H_{\zeta_j}$ and $H_{\xi'_k}$, $j,k=1\dots p$. From the commutation relations, we see that these vector fields pairwise commute, and define thus an orbit foliation $\mathcal{G}$ by leaves of dimension $2p$. Additionally, along each leaf of this foliation, the $\zeta_j$'s with the $\xi'_k$'s define a system of coordinates.

We also observe that $\Sigma$ is transverse to this foliation, so we denote by $\pi=(g_1, \dots, g_{n-p}, g^\ast_1, \dots, g^\ast_{n-p})$ the local projection to $\Sigma$ along $\mathcal{G}$. On $\Sigma$, $(g,g^\ast)$ coincide with $(x,\xi)$. The family
\[
g_1,\dots, g_{n-p} ,\zeta_1, \dots, \zeta_p\ ;\ g^\ast_1,\dots,g^\ast_{n-p}, \xi'_1, \dots, \xi'_p
\]
defines a local coordinate system. In this coordinate system, by construction, we have flattened our three manifolds as announced; it remains to prove that they are indeed Darboux coordinates.

By construction, in these coordinates
\[
H_{\zeta_j} = \partial_{\xi'_j},\text{ and } H_{\xi'_k} = - \partial_{\zeta_k}.
\]
It follows that
\[
\omega=\sum_j d\zeta_j \wedge d\xi'_k + \omega'(\zeta,\xi',g,g^\ast;dg,dg^\ast). 
\]
Since $\omega$ is closed, $\omega'$ cannot depend on $(\zeta,\xi')$. However since $(g,g^\ast)_{|\Sigma}$ is already a Darboux system, we get
\[
\omega = dg\wedge dg^\ast + d\zeta\wedge d\xi'
\]
\end{proof}
Again, the only local invariants of pairs of involutive manifolds intersecting transversally are dimensions. Motivated by \S \ref{sec:study-phase}, we set
\begin{definition}
Let $\mathcal{J}$ and $\mathcal{J}^\ast$ be involutive manifolds in transverse intersection at $\Sigma$, so that $\Sigma = \widetilde{\Sigma\cap\R^{2n}}$. Also assume that the null foliations $\mathcal{F}$ and $\mathcal{F}^\ast$ have smooth global leaves. We say that $(\mathcal{J},\mathcal{J}^\ast)$ is an \emph{intersecting pair}.
\end{definition}

We obtain a converse to \eqref{eq:pre-factorization of phi}:
\begin{proposition}
Let $(\mathcal{J}, \mathcal{J}^\ast)$ be an intersecting pair. Let
\[
\Lambda(\mathcal{J},\mathcal{J}^\ast):= \cup_{y\in\Sigma} \mathcal{F}(y)\cup \mathcal{F}^\ast(y).
\]
This is a lagrangian manifold. Seen as the relation of an FIO, 
\[
\Lambda(\mathcal{J},\mathcal{J}^\ast) \circ \Lambda (\mathcal{J},\mathcal{J}^\ast) = \Lambda (\mathcal{J},\mathcal{J}^\ast).
\]
We say that $(\mathcal{J},\mathcal{J}^\ast)$ is (strictly) positive if $\Lambda(\mathcal{J},\mathcal{J}^\ast)$ is itself (strictly) positive.
\end{proposition}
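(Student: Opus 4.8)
The plan is to reproduce, in this abstract setting, the two arguments carried out for $\Lambda_\phi$ in \S\ref{sec:study-phase}: that $\Lambda:=\Lambda(\mathcal J,\mathcal J^\ast)=\bigcup_{y\in\Sigma}\mathcal F(y)\times\mathcal F^\ast(y)\subset X\times X$ is a Lagrangian manifold, exactly as in the proof of Lemma~\ref{lemma:Lagrangian-associated-with-Involutive-manifold}, and that the identity $\Lambda\circ\Lambda=\Lambda$ is the purely geometric shadow of the associativity/reproducing computation \eqref{eq:associativity}. No new phenomenon is expected; the work is to transcribe these arguments and keep the global hypotheses explicit.

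First I would fix the structure. Since $(\mathcal J,\mathcal J^\ast)$ is an intersecting pair, $\mathcal F$ and $\mathcal F^\ast$ are transverse to $\Sigma$ with smooth global leaves, and (shrinking $X$ to a neighbourhood of $\Sigma$ if needed, as in Lemma~\ref{lemma:Lagrangian-associated-with-Involutive-manifold}) each such leaf meets $\Sigma$ in exactly one point. This gives two holonomy submersions $\pi\colon\mathcal J\to\Sigma$ and $\rho\colon\mathcal J^\ast\to\Sigma$ collapsing a leaf to its intersection with $\Sigma$; since $\ker\omega|_{\mathcal J}=T\mathcal F$ and holonomy is symplectic, $\omega|_{\mathcal J}=\pi^\ast\omega_\Sigma$, and likewise $\omega|_{\mathcal J^\ast}=\rho^\ast\omega_\Sigma$. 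Then $\Lambda=\{(a,b)\in\mathcal J\times\mathcal J^\ast\;:\;\pi(a)=\rho(b)\}$ is the fibre product $\mathcal J\times_\Sigma\mathcal J^\ast$, hence a smooth submanifold of $X\times X$ of dimension $(2n-p)+(2n-p)-(2n-2p)=2n$. Having half dimension, it is Lagrangian for the twisted form of Lemma~\ref{lemma:Lagrangian-associated-with-Involutive-manifold} as soon as it is isotropic; and for two tangent vectors $(u_1,v_1),(u_2,v_2)$ with $u_i\in T\mathcal J$, $v_i\in T\mathcal J^\ast$ and $d\pi\,u_i=d\rho\,v_i$ one computes
\[
\omega(u_1,u_2)=\omega_\Sigma(d\pi\,u_1,d\pi\,u_2)=\omega_\Sigma(d\rho\,v_1,d\rho\,v_2)=\omega(v_1,v_2),
\]
so the twisted form vanishes. (One could instead note $\Lambda=\Lambda_{\mathcal J,\Sigma}\circ(\Lambda_{\mathcal J^\ast,\Sigma})^{-1}$ and invoke Lemma~\ref{lemma:Lagrangian-associated-with-Involutive-manifold} plus the fact that a transverse composition of Lagrangian relations is Lagrangian, transversality being visible in the normal form of Lemma~\ref{lemma:flattening-pair}.)

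For the reproducing identity, suppose $(a,b)\in\Lambda\circ\Lambda$ and pick $c$ with $(a,c),(c,b)\in\Lambda$: then $c\in\mathcal J^\ast$ by the first relation and $c\in\mathcal J$ by the second, so $c\in\mathcal J\cap\mathcal J^\ast=\Sigma$. Writing $a\in\mathcal F(y_1)$, $c\in\mathcal F^\ast(y_1)$, the leaf $\mathcal F^\ast(y_1)$ meets $\Sigma$ only at $y_1$, so $y_1=c$; symmetrically the witness of $(c,b)$ is $c$, whence $a\in\mathcal F(c)$, $b\in\mathcal F^\ast(c)$, i.e. $(a,b)\in\Lambda$. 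Conversely, if $(a,b)\in\Lambda$ with witness $y$, then $(a,y),(y,b)\in\Lambda$ since $y$ lies on $\mathcal F(y)$ and on $\mathcal F^\ast(y)$, so $(a,b)\in\Lambda\circ\Lambda$; thus $\Lambda\circ\Lambda=\Lambda$ as sets, and because $\mathcal J\cap\mathcal J^\ast=\Sigma$ is transverse the composition of the associated canonical relations is clean (transverse in the model of Lemma~\ref{lemma:flattening-pair}), so the identity holds at the level of FIO relations as well. The positivity clause is then a definition, compatible via \eqref{eq:pre-factorization of phi} with the positivity already attached to $\Lambda_\phi$; as after Proposition~\ref{prop:normal-form-positive-involutive} one checks it is equivalent to $i\omega(v,\overline v)$ having a fixed sign on nonzero $v\in(T\mathcal F\oplus T\mathcal F^\ast)|_{\Sigma_\R}$. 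The one genuine (if minor) obstacle is bookkeeping rather than ideas: making sure the global ``leaves meet $\Sigma$ once'' hypothesis is in force so that $\pi,\rho$ and the set manipulations are legitimate, and recording that transversality of $\mathcal J\cap\mathcal J^\ast=\Sigma$ is precisely what upgrades the set identity $\Lambda\circ\Lambda=\Lambda$ to a clean composition of FIO relations.
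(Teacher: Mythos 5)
Your proof is correct and follows essentially the same route as the paper: the reproducing identity is obtained exactly as in the paper's argument, by noting that any intermediate point $c$ lies in $\mathcal{J}\cap\mathcal{J}^\ast=\Sigma$ and that each leaf meets $\Sigma$ in a single point, forcing both witnesses in $\Sigma$ to coincide with $c$. The only (harmless) variation is in the Lagrangian claim, where the paper defers to the local flattening of Lemma~\ref{lemma:Lagrangian-associated-with-Involutive-manifold} while you argue intrinsically via the fibre product $\mathcal{J}\times_\Sigma\mathcal{J}^\ast$ and the identities $\omega|_{\mathcal{J}}=\pi^\ast\omega_\Sigma$, $\omega|_{\mathcal{J}^\ast}=\rho^\ast\omega_\Sigma$ --- an equally valid and arguably cleaner route.
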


Observe that taking the adjoint of the operator translate at the level of the relation in the operation
\[
\Lambda^\ast = \{ (\overline{\beta}, \overline{\alpha})\ |\ (\alpha,\beta)\in\Lambda\}. 
\]
In particular, it makes sense to say that $\Lambda$ is self-adjoint if $\Lambda^\ast = \Lambda$, and we observe that $\Lambda(\mathcal{J},\mathcal{J}^\ast)$ is self-adjoint if and only if $\mathcal{J}^\ast = \overline{\mathcal{J}}$. 

\begin{proof}
The proof that $\Lambda:=\Lambda(\mathcal{J},\mathcal{J}^\ast)$ is lagrangian is very similar to the proof of Lemma \ref{lemma:Lagrangian-associated-with-Involutive-manifold}. We focus on the reproducing property. 
\[
\Lambda(\mathcal{J},\mathcal{J}^\ast) \circ \Lambda (\mathcal{J},\mathcal{J}^\ast) = \{ (\alpha,\beta)\ |\ \exists \gamma,\ (\alpha,\gamma) \text{ and } (\gamma,\beta)\in\Lambda\}.
\]
The condition $\exists \gamma,\ (\alpha,\gamma) \text{ and } (\gamma,\beta)\in\Lambda$ implies that there exist $y,y'\in\Sigma$, so that 
\[
\alpha\in\mathcal{F}(y),\ \gamma\in \mathcal{F}^\ast(y)\cap \mathcal{F}(y'),\ \beta \in \mathcal{F}^\ast(y'). 
\]
The transversality condition implies that $y=y'=\gamma$, and we are done. 
\end{proof}

\begin{proposition}
Let $(\mathcal{J},\mathcal{J}^\ast)$ be an intersecting pair as before. Then $\Lambda(\mathcal{J},\mathcal{J}^\ast)$ is strictly positive and $\Lambda_\R = \Delta_{\Sigma_\R}$ if and only if both $\mathcal{J}$ and $\overline{\mathcal{J}^\ast}$ are strictly positive.
\end{proposition}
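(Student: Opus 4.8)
The plan is to reduce both implications to one linear-algebra computation: the positivity form of $\Lambda:=\Lambda(\mathcal{J},\mathcal{J}^\ast)$ along its real points. First I would describe $T_{(y,y)}\Lambda$ at a real point $y\in\Sigma_\R$. Arguing as in the proof of Lemma~\ref{lemma:Lagrangian-associated-with-Involutive-manifold} (trivialize the two null foliations near $y$, parametrize $\Lambda$ by a base point in $\Sigma$ together with points on the leaves $\mathcal{F}(y)$ and $\mathcal{F}^\ast(y)$, and differentiate), one finds
\[
T_{(y,y)}\Lambda=\{\,(u+a,\ u+b)\ :\ u\in T_y\Sigma,\ a\in T_y\mathcal{F}(y),\ b\in T_y\mathcal{F}^\ast(y)\,\},
\]
where $u$ enters both components because $\Sigma$ is transverse to each foliation (the holonomy correction is of second order; if one wants this cleanly one may read the three pieces off from the flat normal form of Lemma~\ref{lemma:flattening-pair}). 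I would also record the structural facts valid at a real point: $T_y\Sigma$ is contained in $T_y\mathcal{J}$, in $T_y\mathcal{J}^\ast$ and, being the complexification of $T_y\Sigma_\R$ (recall $\Sigma=\widetilde{\Sigma\cap\R^{2n}}$), also in their conjugates $T_y\overline{\mathcal{J}}$ and $T_y\overline{\mathcal{J}^\ast}$; and that once $\Lambda_\R=\Delta_{\Sigma_\R}$ is known, $T(\Lambda_\R)\otimes\C=\{(u,u):u\in T_y\Sigma\}$.

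Next I would compute the twisted form $\Omega=\omega\ominus\omega$ on $X\times X$ (that is, $\omega$ on the first factor minus $\omega$ on the second) along this tangent space. For $v=(u+a,u+b)$ one gets
\[
\Omega(v,\overline v)=\omega(a,\overline a)-\omega(b,\overline b),
\]
everything else cancelling: the two $\omega(u,\overline u)$ contributions cancel between the factors, and every cross term $\omega(u,\overline a)$, $\omega(a,\overline u)$, $\omega(u,\overline b)$, $\omega(b,\overline u)$ vanishes because $u,\overline u\in T_y\Sigma$ lie in $T_y\mathcal{J}$, in $T_y\mathcal{J}^\ast$ and in their conjugates, while $a$ (resp.\ $\overline a$, $b$, $\overline b$) is $\omega$-orthogonal to $T_y\mathcal{J}$ (resp.\ $T_y\overline{\mathcal{J}}$, $T_y\mathcal{J}^\ast$, $T_y\overline{\mathcal{J}^\ast}$). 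The point is then that $\omega(a,\overline a)$ is, up to the sign convention fixed for positive involutive manifolds, the positivity form of $\mathcal{J}$ along its null foliation above $\Sigma$, while $-\omega(b,\overline b)=\omega(\overline b,b)$ is, after the substitution $b\mapsto\overline b$ (which bijects $T_y\mathcal{F}^\ast(y)$ onto $T_y\overline{\mathcal{F}^\ast}(y)$), the positivity form of $\overline{\mathcal{J}^\ast}$ along its null foliation above $\Sigma$.

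From here the equivalence is bookkeeping. For the implication from the right, strict positivity of $\mathcal{J}$ and of $\overline{\mathcal{J}^\ast}$ makes $\Omega(v,\overline v)$ a sum of two terms, each strictly of the right sign unless $a=0$, resp.\ $b=0$; hence it vanishes only when $v=(u,u)$. Strict positivity also forces $\mathcal{J}_\R=(\mathcal{J}^\ast)_\R=\Sigma_\R$ (a nonzero real vector tangent to a null leaf would violate the strict inequality), so a real point of $\Lambda$ has both coordinates in $\Sigma_\R$ and, by transversality of $\Sigma$ to each foliation, must lie on $\Delta_{\Sigma_\R}$; the reverse inclusion $\Delta_{\Sigma_\R}\subset\Lambda_\R$ is clear, so $\Lambda_\R=\Delta_{\Sigma_\R}$, $\{(u,u):u\in T_y\Sigma\}=T(\Lambda_\R)\otimes\C$, and $\Lambda$ is strictly positive. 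For the converse, I would evaluate the same formula on the special tangent vectors $v=(a,0)$ and $v=(0,b)$ of $T_{(y,y)}\Lambda$; using $\Lambda_\R=\Delta_{\Sigma_\R}$, neither lies in $T(\Lambda_\R)\otimes\C$ unless it is zero, so strict positivity of $\Lambda$ yields strictness of the $a$-term for every $a\neq0$ and of the $b$-term for every $b\neq0$, which is exactly strict positivity of $\mathcal{J}$ and of $\overline{\mathcal{J}^\ast}$.

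The step I expect to be delicate is the cross-term computation, precisely because conjugation does not preserve $T_y\mathcal{J}$ (only its real part), so one must carry the four subspaces $T_y\mathcal{J}$, $T_y\overline{\mathcal{J}}$, $T_y\mathcal{J}^\ast$, $T_y\overline{\mathcal{J}^\ast}$ at once and exploit that each contains $T_y\Sigma$; combined with keeping the sign of the twisted form $\omega\ominus\omega$ and of the ``$i\omega(v,\overline v)<0$'' convention straight, this is where an error would most easily slip in. Computing $T_{(y,y)}\Lambda$ and verifying $\Lambda_\R=\Delta_{\Sigma_\R}$ are routine once one commits to the flat model of Lemma~\ref{lemma:flattening-pair} for the tangent-space part only.
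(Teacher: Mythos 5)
Your argument is correct and is essentially the paper's proof: the paper likewise decomposes $T\Lambda$ at $\Sigma$ as $T\mathcal{F}\oplus T\mathcal{F}^\ast\oplus T(\Lambda_\R)$, observes the cross terms vanish, and reduces the positivity of $\Lambda$ to that of the two null foliations with the sign flip from the twisted form $\omega\ominus\omega$ accounting for the conjugation on $\mathcal{J}^\ast$. You carry out the cross-term cancellation and the $\Lambda_\R=\Delta_{\Sigma_\R}$ verification in more detail than the paper does, but the route is the same.
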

In particular, $(\mathcal{J},\overline{\mathcal{J}})$ yields a strictly positive self-adjoint lagrangian $\Lambda(\mathcal{J},\overline{\mathcal{J}})$. 

\begin{proof}
At $\Sigma$, we find
\[
T\Lambda = T\mathcal{F} \oplus T\mathcal{F}^\ast \oplus T(\Lambda_\R),
\]
and $T\mathcal{F}^\ast \subset T\mathcal{F}^\perp$, so the computation of $\omega(v,\overline{v})$ for $v\in T\Lambda$ reduces to the same computation for $\mathcal{F}$ and $\mathcal{F}^\ast$, with the caveat that the symplectic form to be used is $d\xi \wedge dx - d\xi'\wedge dx'$.
\end{proof}

In the local reductions above, we have used symplectomorphisms that preserve the real points of $\Sigma$, but which do not preserve real points in general. If we have an involutive manifold $\mathcal{J}$, and a real-symplectic $\kappa$, then certainly $\kappa(\overline{\mathcal{J}})= \overline{\kappa(\mathcal{J})}$. In particular, using real symplectic maps, we must preserve the class of lagrangians of the form $\Lambda(\mathcal{J},\overline{\mathcal{J}})$. However according to Proposition \ref{prop:normal-form-positive-involutive} these look all locally the same modulo real symplectic maps.

\end{document}